\theoremstyle{plain}
\newtheorem{lem}{Lemma}[section]
\newtheorem{theo}[lem]{Theorem}
\newtheorem{prop}[lem]{Proposition}
\newtheorem{cor}[lem]{Corollary}
\newtheorem{remark}[lem]{Remark}
\font\k=cmr7
\font\rm=cmr12
  \newcommand {\di}{\mbox{\k dis}}
  \newcommand {\fin}{\mbox{\k fin}}
  \newcommand{\unip}{\operatorname{unip}}
  \newcommand {\reg}{\mbox{\k reg}}
  \newcommand {\spec}{\mbox{\k spec}}
  \newcommand {\geo}{\mbox{\k geo}}
  \newcommand {\C}{{\mathbb C}}
  \newcommand {\bH}{{\mathbb H}}
  \newcommand {\N}{{\mathbb N}}
  \newcommand {\R}{{\mathbb R}}
  \newcommand {\Z}{{\mathbb Z}}
  \newcommand {\Q}{{\mathbb Q}}
  \newcommand {\A}{{\mathbb A}}
  \newcommand {\af}{{\mathfrak a}}
  \newcommand {\gf}{{\mathfrak g}}
  \newcommand {\kf}{{\mathfrak k}}
  \newcommand {\of}{{\mathfrak o}}
  \newcommand {\uf}{{\mathfrak u}}
  \newcommand {\pg}{{\mathfrak p}}
   \newcommand {\pf}{{\mathfrak p}}
\renewcommand {\H}{{\mathcal H}}
  \newcommand {\M}{{\mathcal M}}
  \newcommand {\cF}{{\mathcal F}}
  \newcommand {\Co}{{\mathcal C}}
 \newcommand {\cO}{{\mathcal O}}
 \newcommand {\ccC}{{\mathscr C}}
  \newcommand {\G}{{\bf G}}
 \newcommand {\cH}{{\mathcal H}}
 \newcommand {\cP}{{\mathcal P}}
 \newcommand {\cL}{{\mathcal L}}
 \newcommand {\cA}{{\mathcal A}}
\newcommand {\cT}{{\mathcal T}}
\newcommand  {\cZ}{{\mathcal Z}}
\newcommand {\bs}{\backslash}
\newcommand{\cU}{{\mathcal U}}
\newcommand{\levis}{{\mathcal L}}
\newcommand{\Ai}{A_M(\R)^0}
\renewcommand{\Re}{\operatorname{Re}}
\newcommand{\Tr}{\operatorname{Tr}}
\newcommand{\End}{\operatorname{End}}
\newcommand{\tr}{\operatorname{tr}}
\newcommand{\inj}{\operatorname{inj}}
\newcommand{\Id}{\operatorname{Id}}
\newcommand{\Hom}{\operatorname{Hom}}
\newcommand{\Ind}{\operatorname{Ind}}
\newcommand{\rk}{\operatorname{rank}}
\newcommand{\vol}{\operatorname{vol}}
\newcommand{\SL}{\operatorname{SL}}
\newcommand{\GL}{\operatorname{GL}}
\newcommand{\SO}{\operatorname{SO}}
\newcommand{\Ad}{\operatorname{Ad}}
\newcommand{\supp}{\operatorname{supp}}
\renewcommand{\det}{\operatorname{det}}
\newcommand{\Rep}{\operatorname{Rep}}
\newcommand{\norm}[1]{\lVert#1\rVert}
\newcommand{\abs}[1]{\lvert#1\rvert}
\newcommand{\eps}{\epsilon}
\newcommand{\One}{\mathbf 1}
\newcommand{\erf}{\operatorname{erf}}
\newcommand{\aaa}{\mathfrak{a}}
  \newcommand {\K}{{\bf K}}
  \newcommand {\bU}{{\bf U}}
  \newcommand{\Ht}{H}
\newcommand{\sprod}[2]{\left\langle#1,#2\right\rangle}
\newcommand{\PPP}{\mathcal{P}}
\newcommand{\FFF}{{\mathcal F}}
\newcommand{\rts}{\Sigma}
\newcommand{\disc}{\operatorname{disc}}
\newcommand{\srts}{\Delta}
\newcommand{\modulus}{\delta}
\newcommand{\AF}{{\mathcal A}}
\newcommand{\zzz}{\mathfrak{z}}
\newcommand{\iii}{{\mathrm i}}
\newcommand{\LieG}{\mathfrak{g}}
\newcommand{\bases}{\mathfrak{B}}
\newcommand{\bss}{\underline{\beta}}
\newcommand{\dtup}{\mathcal{X}}
\newcommand{\card}[1]{\lvert#1\rvert}
\newcommand{\ka}{\mathfrak{a}}
\newcommand{\diag}{\operatorname{diag}}
\newcommand{\cpt}{\mathbf{K}}
\newcommand{\level}{\operatorname{level}}
\newcommand{\FP}{\operatorname{FP}}
\newcommand{\Mat}{\operatorname{Mat}}
\newcommand{\OOO}{\mathcal{O}}
\newcommand{\1}{{\bf 1}}
\begin{document}

\title[]
{Approximation of $L^2$-analytic torsion for arithmetic quotients of the
symmetric space $\SL(n,\R)/\SO(n)$}
\date{\today}

\author{Jasmin Matz}
\address{The Hebrew University of Jerusalem\\
Einstein Institute of Mathematics\\
Givat Ram \\
Jerusalem 9190401 \\
Israel}
\email{jasmin.matz@mail.huij.ac.il}

\author{Werner M\"uller}
\address{Universit\"at Bonn\\
Mathematisches Institut\\
Endenicher Allee 60\\
D -- 53115 Bonn, Germany}
\email{mueller@math.uni-bonn.de}

\keywords{analytic torsion, locally symmetric spaces}
\subjclass{Primary: 58J52, Secondary: 11M36}

\begin{abstract}
In \cite{MzM} we defined a regularized analytic torsion for 
quotients of the symmetric space $\SL(n,\R)/\SO(n)$ by arithmetic lattices. In 
this paper we study the limiting behaviour of the analytic torsion as the 
lattices run through sequences of congruence subgroups of a fixed arithmetic 
subgroup. Our main result states that for principal congruence subgroups and
strongly acyclic flat bundles, the logarithm of the analytic torsion, devided
 by the index of the subgroup, converges to the $L^2$-analytic torsion. 
\end{abstract}

\maketitle
\setcounter{tocdepth}{1}
\tableofcontents

\section{Introduction}

Let $X$ be a compact oriented Riemannian manifold of dimension $d$. Let
 $\rho$ be a finite
dimensional representation of $\pi_1(X)$ and let $E_\rho\to X$ be the
associated flat vector bundle. Pick a Hermitian fiber metric in $E_\rho$.
 Let $\Delta_p(\rho)$
be the Laplace operator on $E_\rho$-valued $p$-forms. Let $\zeta_p(s,\rho)$
be its zeta function \cite{Sh}. Let $e^{-t\Delta_p(\rho)}$, $t>0$, be the heat
operator and let $b_p(\rho)=\dim\ker\Delta_p(\rho)$. Then for  $\Re(s)>d/2$ 
one has
\begin{equation}\label{mellin-transf}
\zeta_p(s,\rho)=\frac{1}{\Gamma(s)}\int_0^\infty (\Tr\left(e^{-t\Delta_p(\rho)}
\right)-b_p(\rho)) t^{s-1} dt.
\end{equation}
Then the analytic torsion $T_X(\rho)\in\R^+$, introduced by Ray and Singer 
\cite{RS}, is defined by
\begin{equation}\label{analtors1}
\log T_X(\rho):=\frac{1}{2}\sum_{p=1}^d (-1)^pp\frac{d}{ds}\zeta_p(s;\rho)
\big|_{s=0}.
\end{equation}
The corresponding $L^2$-invariant, the $L^2$-analytic torsion $T^{(2)}_X(\rho)$,
 was introduced by Lott \cite{Lo} and Mathai \cite{MV}. It is defined in terms
of the von Neumann trace of the heat operators on the universal covering 
$\widetilde X$ of $X$. 

The analytic torsion has been  used by Bergeron and Venkatesh \cite{BV} to 
study the growth of torsion in the cohomology of cocompact arithmetic groups.
The approach of \cite{BV} is based on the approximation of the $L^2$-torsion
by the renormalized analytic torsion for sequences of coverings of a given
compact locally symmetric space. Since many important arithmetic groups are
not cocompact, it is desirable to extend these results to the non-compact 
case. The first problem is that the analytic torsion is not defined for 
non-compact manifolds. To cope with this problem we defined in \cite{MzM} 
a regularized version of the analytic torsion for
quotients of the symmetric space $\SL(n,\R)/\SO(n)$ by arithmetic groups.
The goal of the present paper is to extend the result of Bergeron and 
Venkatesh \cite{BV} on the approximation of the $L^2$-analytic torsion
to this setting. 

To begin with we recall the results of Bergeron and Venkatesh. 
Let $G$ be a semisimple Lie group of non-compact type. Let $K$ be a maximal
compact subgroup of $G$ and let $\widetilde X=G/K$  be the associated Riemannian
symmetric space endowed with a $G$-invariant metric. Let $\Gamma\subset G$ be 
a cocompact discrete subgroup.
For simplicity we assume that $\Gamma$ is torsion free. Let $X:=\Gamma\backslash
\widetilde X$. Then $X$ is a compact locally symmetric manifold of non-positive
curvature. Let $\tau$ be an irreducible  finite dimensional complex
representation of $G$. Denote by $T_X(\tau)$ (resp. $T^{(2)}_X(\tau)$)
the analytic torsion (resp. the $L^2$-torsion) taken with respect
to the representation $\tau|_\Gamma$ of $\Gamma$. 
Since the heat kernels on $\widetilde X$ are $G$-invariant, one has
\begin{equation}\label{l2-tor}
\log T^{(2)}_X(\tau)=\vol(X) t^{(2)}_{\widetilde X}(\tau),
\end{equation}
where $t^{(2)}_{\widetilde X}(\tau)$ is a constant that depends only on 
$\widetilde X$ and $\tau$. 
It is an interesting problem to see if the $L^2$-torsion can be approximated
by the torsion of finite coverings $X_i\to X$. This problem has been studied
by Bergeron and Venkatesh \cite{BV} under a certain non-degeneracy condition
on $\tau$. Representations which satisfy this condition are called 
{\it strongly acyclic}. One of the main results of \cite{BV} is as follows. 
Let $X_i\to X$, $i\in\N$, be a sequence of finite coverings of $X$. 
Let $\tau$ be strongly acyclic. Let $\inj(X_i)$ denote the injectivity radius
of $X_i$ and assume that $\inj(X_i)\to\infty$ as $i\to\infty$. Then
by \cite[Theorem 4.5]{BV} one has
\begin{equation}\label{limittor}
\lim_{i\to\infty}\frac{\log T_{X_i}(\tau)}{\vol(X_i)}=t^{(2)}_{\widetilde
X}(\tau).
\end{equation}
Let $\delta(\widetilde X):=\rk_\C(G)-\rk_\C(K)=1$. The constant 
$t^{(2)}_{\widetilde X}(\rho)$ has been computed by Bergeron and Venkatesh
\cite[Proposition 5.2]{BV}. It is shown that $t^{(2)}_{\widetilde X}(\rho)\neq0$
if and only if $\delta(\widetilde X)=1$. 
Combined with the equality of analytic torsion and Reidemeister torsion 
\cite{Mu2}, Bergeron and Venkatesh \cite{BV} used this result in the
case $\delta(\widetilde X)=1$ to study the 
growth of torsion in the cohomology of cocompact arithmetic groups. 
Unfortunately, so far the method does not work for representations which are
not strongly acyclic. Especially, it does not work for the trivial 
representation, which is the most interesting case.  For a detailed
discussion of this problem in the case of hyperbolic 3-manifolds see 
\cite{BSV}. 

Another challenging problem is to extend the method to the case of arithmetic 
lattices which are not cocompact.
In \cite{AY} Ash, Gunnells, McConnell and Yasaki investigated the growth
of torsion in the cohomology of non-cocompact arithmetic subgroups 
$\Gamma\subset\GL(n,\Z)$ in the case of the trivial coefficient system,
and formulated a number of conjectures concerning the expected behavior of
torsion cohomology. To study the growth of the 
torsion in the cohomology of non-cocompact arithmetic groups  one can try to 
proceed as in \cite{BV}. As a first step one would like to extend 
\eqref{limittor} to the finite volume case. 
However, due to the presence of the continuous spectrum of the Laplace 
operators in the non-compact case, one encounters serious technical 
difficulties in attempting
to generalize  \eqref{limittor} to the finite volume case. In \cite{Ra1} J. 
Raimbault has dealt with finite volume hyperbolic 3-manifolds. In \cite{Ra2}
he applied this to study the growth of torsion in the cohomology for certain
sequences of congruence subgroups of Bianchi groups. 

The main
purpose of the present paper is to extend \eqref{limittor} to arithmetic 
quotients of 
\[
\widetilde X:=\SL(n,\R)/\SO(n).
\]
The regularized analytic torsion in the non-compact case has been defined in
\cite{MzM}. For its definition we pass to the 
adelic framework. Let $G=\SL(n)$. Let $\A$ be the ring of adeles and $\A_f$ the 
ring of finite adeles. Let $K_\infty=\SO(n)$ be the usual maximal compact 
subgroup of $G(\R)=\SL(n,\R)$.  Given  an open compact subgroup, 
$K_f\subset G(\A_f)$, let
\begin{equation}\label{adelic-mfd}
X(K_f):= G(\Q)\bs (\widetilde X\times G(\A_f)/K_f)
\end{equation}
be the associated adelic quotient. This is the adelic version of a locally
symmetric space.  Since $\SL(n)$ is simply connected,  strong approximation
holds for $\SL(n)$ and therefore, we have
\begin{equation}
X(K_f)=\Gamma\bs\widetilde X,
\end{equation}
where $\Gamma$ is the projection of $(G(\R)\times K_f)\cap G(\Q)$ onto $G(\R)$.
We will assume that $K_f$ is neat
so that $X(K_f)$ is a manifold. Let $\tau\colon G(\R)\to \GL(V_\tau)$ be a 
finite dimensional complex representation. The restriction of $\tau$ to 
$\Gamma\subset G(\R)$ induces a flat 
vector bundle $E_{\tau}$ over $X(K_f)$. By \cite{MM}, $E_{\tau}$
is isomorphic to the locally homogeneous vector bundle over $X(K_f)$, which is 
associated to $\tau|_{K_\infty}$. Moreover it can be
equipped with a distinguished fiber metric, induced from an admissible inner
product in $V_\tau$. In this way we get a fiber metric in $E_\tau$. 
Let $\Delta_p(\tau)$ be the  twisted Laplace operator on
$p$-forms with values in $E_\tau$. If $X(K_f)$ is not compact, $\Delta_p(\tau)$
has continuous spectrum and therefore, the analytic torsion can not be defined
by \eqref{analtors1}. In \cite{MzM} we have introduced a regularized version 
of the analytic torsion.  The starting point for the
definition of the regularized analytic torsion in the non-compact case is
formula \eqref{mellin-transf}. In
\cite{MzM} we introduced a regularized trace of the heat operator. It is 
defined as follows. Let $\widetilde\Delta_p(\tau)$ be the Laplace operator
on $\widetilde E_\tau$-valued $p$-forms on $\widetilde X$. The heat operator
$e^{-\widetilde \Delta_p(\tau)}$ is a convolution operator given by a kernel
$H_t^{\tau,p}\colon G(\R)\to \GL(\Lambda^p\pg^\star\otimes V_\tau)$, where
$\gf=\kf\oplus\pg$ is the Cartan decomposition of the Lie algebra $\gf$ of
$G(\R)$. Let $h_t^{\tau,p}\in C^\infty(G(\R))$ be defined by
\[
h_t^{\tau,p}(g)=\tr H_t^{\tau,p}(g),\quad g\in G(\R).
\]
Let $J_{\geo}(f)$, $f\in C_c^\infty(G(\A))$, be the geometric side of the 
(non-invariant) Arthur trace formula \cite{Ar1}. By \cite[Theorem 7.1]{FL1},
$J_{\geo}(f)$ is defined for all $f\in\Co(G(\A),K_f)$, the adelic version of the
Schwartz space (see section \ref{sec-prel} for its definition).  Let $\1_{K_f}$ 
be the characteristic function of $K_f$ in $G(\A_f)$. Put
\begin{equation}\label{normal-chfct}
\chi_{K_f}:=\frac{\1_{K_f}}{\vol(K_f)}.
\end{equation}
Then $h_t^{\tau,p}\otimes \chi_{K_f}$ belongs to the Schwartz space 
$\Co(G(\A),K_f)$, and in 
\cite[(13.16)]{MzM} we defined the regularized trace of the heat operator by
\begin{equation}\label{regtrace}
\Tr_{\reg}\left(e^{-t\Delta_p(\tau)}\right)=J_{\geo}(h_t^{\tau,p}\otimes \chi_{K_f}).
\end{equation}
If $X(K_f)$ is compact, this equality is just the content of the trace formula. 
For the motivation of this definition see \cite{MzM}.

In order to be able to 
use the Mellin transform to define a regularized zeta function similar to 
\eqref{mellin-transf} one needs to know the asymptotic behavior of the 
regularized
trace of the heat operator
as $t\to\infty$ and $t\to 0$. Let $\theta$ be the Cartan involution of 
$G(\R)$. Let $\tau_\theta:=\tau\circ\theta$. Assume that 
$\tau\ncong\tau_\theta$. Then by \cite[Theorem 1.2]{MzM} there exists $c>0$
such  that $\Tr_{\reg}\left(e^{-t\Delta_p(\tau)}\right)=O(e^{-ct})$ as $t\to\infty$ 
for all $p=0,\dots,d$. Furthermore, by \cite[Theorem 1.1]{MzM}, 
$\Tr_{\reg}\left(e^{-t\Delta_p(\tau)}\right)$ admits an asymptotic expansion as
$t\to 0$. This expansion contains logarithmic terms. Using these facts,
the zeta function $\zeta_p(s,\tau)$ can be defined as in \eqref{mellin-transf}
with the trace of the heat operator replaced by the regularized trace. Due to
the presence of log-terms in the asymptotic expansion for $t\to 0$, 
$\zeta_p(s,\tau)$ may have a pole at $s=0$. So the definition  \eqref{analtors1}
of the analytic torsion has to be modified. 
Let $f(s)$ be a meromorphic function on $\C$. For $s_0\in\C$ 
let $f(s)=\sum_{k\ge k_0}a_k(s-s_0)^k$
be the Laurent expansion of $f$ at $s_0$. Put $\FP_{s=s_0}f(s):=a_0$. 
Now we define
the analytic torsion $T_{X(K_f)}(\tau)\in\C\setminus\{0\}$ by
\begin{equation}\label{analtor}
\log T_{X(K_f)}(\tau)=\frac{1}{2}\sum_{p=0}^d (-1)^p p 
\left(\FP_{s=0}\frac{\zeta_p(s;\tau)}{s}\right).
\end{equation} 
If the zeta functions are holomorphic at $s=0$, this is the same definition as
before. 

Now we can formulate our main result. Let $n\ge 2$. Put
$\widetilde X_n=\SL(n,\R)/\SO(n)$. Let $K_n(N)\subset\SL(n,\A_f)$ be the 
principal congruence subgroup of level $N\ge 3$. Put $X_n(N):=X(K_n(N))$. 
 Note that $X_n(N)=\Gamma(N)\bs\widetilde X_n$, where 
$\Gamma(N)\subset\SL(n,\Z)$ is the 
principal congruence subgroup of level $N$. 
Then our main result is the following theorem
\begin{theo}\label{theo-main}
Let $\tau\in\Rep(\SL(n,\R))$. Assume that $\tau\ncong\tau_\theta$. Then for
$n\ge 2$ we have
\[
\lim_{N\to\infty}\frac{\log T_{X_n(N)}(\tau)}{\vol(X_n(N))}
=t^{(2)}_{\widetilde X_n}(\tau).
\]
Moreover, if $n>4$, then $t^{(2)}_{\widetilde X_n}(\tau)=0$, and if $n=3,4$, then
$t^{(2)}_{\widetilde X_n}(\tau)>0$.
\end{theo}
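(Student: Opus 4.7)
The plan is to expand the regularized trace in \eqref{regtrace} by the fine $\cO$-expansion of Arthur's trace formula, $J_{\geo}=\sum_\cO J_\cO$, and to isolate the contribution of the trivial class $\cO=\{1\}$ as the only term that survives, after normalization by $\vol(X_n(N))$, in the limit $N\to\infty$. From \eqref{analtor} and \eqref{regtrace} one has
\[
\log T_{X_n(N)}(\tau)=\frac{1}{2}\sum_{p=0}^{d}(-1)^p p\,\FP_{s=0}\frac{1}{s\,\Gamma(s)}\int_0^\infty J_{\geo}(h_t^{\tau,p}\otimes\chi_{K_n(N)})\,t^{s-1}\,dt,
\]
and the strategy is to control each orbital contribution separately and pass to the limit through the Mellin transform and the finite-part prescription.

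The identity contribution equals $\vol(X_n(N))\cdot h_t^{\tau,p}(e)$ up to a fixed volume constant, and $h_t^{\tau,p}(e)$ is precisely the trace density of the heat kernel on $\widetilde X_n$ that enters the definition of the $L^2$-analytic torsion. Feeding this term through the Mellin transform and $\FP_{s=0}$ reproduces $\vol(X_n(N))\cdot t^{(2)}_{\widetilde X_n}(\tau)$. The Mellin integral converges at infinity because the hypothesis $\tau\ncong\tau_\theta$ forces strong acyclicity, hence the exponential decay $h_t^{\tau,p}(e)=O(e^{-ct})$ as $t\to\infty$ by the $\widetilde X_n$-version of \cite[Theorem 1.2]{MzM}. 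The short-time behaviour on $\widetilde X_n$ is standard, so the identity term is handled completely.

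The heart of the argument is the estimate
\[
\frac{1}{\vol(X_n(N))}\,\FP_{s=0}\frac{1}{s\,\Gamma(s)}\int_0^\infty J_\cO(h_t^{\tau,p}\otimes\chi_{K_n(N)})\,t^{s-1}\,dt\longrightarrow 0\quad\text{as }N\to\infty
\]
for every $\cO\neq\{1\}$. For the tail $t\ge 1$ the uniform exponential decay $J_{\geo}(h_t^{\tau,p}\otimes\chi_{K_n(N)})=O(e^{-ct})$ provided by \cite[Theorem 1.2]{MzM} handles the Mellin integral with room to spare, since the Schwartz-class input is controlled uniformly in $N$. For $t$ in a compact interval, each non-trivial orbital integral (and each weighted orbital integral attached to a proper Levi) grows in $N$ at a rate strictly smaller than $\vol(X_n(N))\sim[\SL(n,\Z):\Gamma(N)]$; the quantitative control needed here comes from effective bounds of Finis--Lapid type, compare \cite{FL1}, on the dependence of the geometric terms on the level $N$. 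The truly delicate region is small $t$, where the asymptotic expansion of \cite[Theorem 1.1]{MzM} contains logarithmic terms and $\FP_{s=0}$ picks up finite contributions from the singular part of the expansion; one has to verify that the coefficients of these log-terms, normalized by $\vol(X_n(N))$, come in the limit solely from the identity contribution. Combining these three regimes and interchanging $\lim_{N\to\infty}$ with the Mellin integral and the finite-part yields the first assertion.

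The dichotomy in the second assertion is purely Lie-theoretic: the fundamental rank is $\delta(\widetilde X_n)=\rk_\C\SL(n,\R)-\rk_\C\SO(n)=(n-1)-\lfloor n/2\rfloor$, which equals $1$ exactly for $n\in\{3,4\}$ and is $\ge 2$ for $n\ge 5$. By \cite[Proposition 5.2]{BV}, $t^{(2)}_{\widetilde X_n}(\tau)$ is an explicit Plancherel integral that vanishes when $\delta(\widetilde X_n)\neq 1$ and is strictly positive for all $\tau$ as above when $\delta(\widetilde X_n)=1$, which yields the claimed vanishing for $n>4$ and positivity for $n=3,4$. The main obstacle throughout is the uniform-in-$(N,t)$ control of the weighted orbital integrals attached to proper Levi subgroups: these are neither compactly supported in $t$ nor simply factorised, they mix the archimedean function $h_t^{\tau,p}$ and the non-archimedean function $\chi_{K_n(N)}$ in a non-trivial way, and their interaction with the logarithmic singularities of the short-time expansion is the subtlest point of the analysis.
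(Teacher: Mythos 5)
Your overall architecture -- isolate the identity contribution of the geometric side, identify it with $\vol\cdot t^{(2)}_{\widetilde X_n}(\tau)$, and show that everything else is $o(\vol(X_n(N)))$ -- is indeed the shape of the paper's argument, and your treatment of the second assertion (fundamental rank $\delta(\widetilde X_n)=1$ iff $n=3,4$, then \cite[Proposition 5.2]{BV}) is exactly what the paper does. But the three analytic steps you describe each contain a genuine gap. First, the uniform-in-$N$ exponential decay of $J_{\geo}(h_t^{\tau,p}\otimes\chi_{K_n(N)})/\vol(X_n(N))$ for $t\ge 1$ is \emph{not} a consequence of \cite[Theorem 1.2]{MzM}, which is a statement for a fixed $K_f$; the constants there depend on the level. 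Establishing uniformity is a substantial piece of work: the paper passes to the spectral side, uses the refined spectral expansion of \cite{FLM1}, the uniform bounds on logarithmic derivatives of intertwining operators under conditions (TWN) and (BD) from \cite{FLM2}, and the polynomial boundedness of the discrete-spectrum counting measures $\mu^M_{K(N)}$ (limit-multiplicity machinery) to get Proposition \ref{prop-larget}. Your phrase ``the Schwartz-class input is controlled uniformly in $N$'' asserts precisely what has to be proved.

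Second, you apply the coarse expansion $J_{\geo}=\sum_{\cO}J_{\cO}$ directly to the non-compactly-supported function $h_t^{\tau,p}\otimes\chi_{K_n(N)}$ and propose to estimate each $J_{\cO}$, $\cO\neq\{1\}$, separately. For a test function that is not compactly supported at the archimedean place, infinitely many classes $\cO$ contribute, and no term-by-term bound uniform in $N$ is available. The paper avoids this by first replacing $h_t^{\tau,p}$ with a compactly supported truncation $h_{t,T}^{\tau,p}$, controlling the error through the \emph{spectral} side (Proposition \ref{prop-cptsupp}); only then does the geometric side reduce, for large $N$, to the single unipotent class (Lemma \ref{lem-geounip}). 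Third, the quantitative decay of the non-identity terms does not come from the continuity results of \cite{FL1}: it requires Arthur's fine geometric expansion, bounds on the global coefficients $a^M(S(N),\cO)$ of the form $O((\log N)^b)$ -- which are currently known only for $\GL(n)$, via \cite{Ma} -- and explicit estimates of the $p$-adic weighted orbital integrals yielding the $N^{-(n-1)}(\log N)^a$ decay of \eqref{eq:finiteest}. This is also why the paper proves the result for $\GL(n)$ first and then transfers to $\SL(n)$ by comparing the two trace formulas, a reduction that is absent from your proposal and cannot simply be skipped, since the needed coefficient bounds are not available for $\SL(n)$ directly.
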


\begin{remark}
The number $t^{(2)}_{\widetilde X}(\rho)$ can be defined for every 
finite dimensional representation (cf. \cite[4.4]{BV}). Moreover, it can be
computed explicitly \cite[\S5]{BV} (see also \cite[\S3]{AY}). For example, for the trivial representation
$\tau_0$ of $\SL(n,\R)$, $n=3,4$, one has
\[
t^{(2)}_{\widetilde X_3}(\tau_0)=\frac{\pi}{2\vol(\widetilde X_3^c)}, \quad 
t^{(2)}_{\widetilde X_4}(\tau_0)=\frac{124\pi}{45 \vol(\widetilde X_4^c)}
\]
\cite[5.9.3, Example 2]{BV}. Here $\widetilde X_j^c$ denotes the compact dual of $\widetilde X_j$, and the metric on $\widetilde X_j^c$ is the one induced from the metric on $\widetilde X_j$.  For the second equality we used that $\SL(4,\R)$
is a double covering of $\SO(3,3)$, and as explained at the beginning of
section 5.8 in \cite{BV}, the corresponding number for $\SO(3,3)$ agrees with
that for $\SO(5,1)$. Finally, $t^{(2)}_{\bH^5}(\tau_0)$ is computed in 
\cite[5.9.3, Example 1]{BV}.
\end{remark}
\begin{remark}
Let $\Gamma\subset\SL(n,\R)$ be a cocompact torsion free lattice. Then 
$T_{\Gamma\bs \widetilde X_n}(\tau)=1$ for all $n>4$ and all 
$\tau\in\Rep(\SL(n,\R))$.
This follows in exactly the same way as in \cite[Corollary 2.2]{MS}. We don't
know if this also holds in the non-cocompact case. 
\end{remark}
\begin{remark}
We expect that Theorem \ref{theo-main} holds more generally for sequences of 
arbitrary congruence quotients $Y_j=\Gamma_j\bs\SL_n(\R)/\SO(n)$ such that 
$\vol(Y_j)\to\infty$ as $j\to\infty$. The extension hinges at the solution of
some technical problems related to the fine geometric expansion of the trace
for $\SL(n)$. For more details see the end of the section.
\end{remark}

We shall now briefly outline our method to prove Theorem \ref{theo-main}.
For technical reasons we work with $\GL(n)$ in place of $\SL(n)$. 
Let $K_f\subset\GL(n,\A_f)$ be an open compact subgroup. Then we define the
corresponding adelic quotient $Y(K_f)$ as above by
\[
Y(K_f):=\GL(n,\Q)\bs(\widetilde X\times\GL(n,\A_f))/K_f.
\]
We note that $Y(K_f)$ is the disjoint union of finitely many locally symmetric
spaces $\Gamma_i\bs\widetilde X$ for arithmetic subgroups $\Gamma_i\subset 
\GL(n,\Q)$, $i=1,\dots,l$. Now let $K(N)\subset\GL(n,\A_f)$ be the principal 
congruence subgroup
of level $N$. Put $Y(N):=Y(K(N))$. Then $Y(N)$ is the disjoint union of 
$\varphi(N)$ copies of $X(N)$, where $\varphi(N)$ is Euler's function (see
\cite[p. 13]{Ar6}). The disjoint union of
$\varphi(N)$ copies of the flat $E_\tau$ over $X(N)$ is a flat bundle 
$\widehat E_\tau$ over $Y(N)$. Let $\Delta_{p,N}(\tau)$ be the Laplace operator
on $\widehat E_\tau$-valued $p$-forms on $Y(N)$. We define the regularized 
trace of the heat operator $e^{-t\Delta_{p,N}(\tau)}$ as above by
\[
\Tr\left(e^{-t\Delta_{p,N}(\tau)}\right):=
J_{\geo}^{\GL(n)}(h_t^{\tau,p}\otimes \chi_{K(N)}),
\]
where $J_{\geo}^{\GL(n)}$ is now the geometric side of the trace formula for 
$\GL(n,\A)^1$ and $\chi_{K(N)}$ the normalized characteristic function of 
$K(N)$ in $\GL(n,\A_f)$. Using the regularized trace, we define the analytic
torsion $T_{Y(N)}(\tau)$ in the same way as above. Comparing the trace formulas
for $\SL(n)$ and $\GL(n)$, it follows that
\[
\log T_{Y(N)}(\tau)=\varphi(N) \log T_{X(N)}(\tau).
\]
Furthermore note that $\vol(Y(N))=\varphi(N)\vol(X(N))$. Hence it suffices to
show that
\begin{equation}\label{limtor7}
\lim_{N\to\infty}\frac{\log T_{Y(N)}(\tau)}{\vol(Y(N))}= t^{(2)}_{\widetilde X}(\tau).
\end{equation} 
To establish \eqref{limtor7} we proceed as follows. Let
\begin{equation}
K_N(t,\tau):=\frac{1}{2}\sum_{p=1}^d (-1)^p p 
\Tr_{\reg}\left(e^{-t\Delta_{p,N}(\tau)}\right).
\end{equation}
As observed above, $K_N(t,\tau)$ is exponentially decreasing as $t\to\infty$
and admits an asymptotic expansion as $t\to 0$. Thus the analytic torsion can
be defined by
\begin{equation}\label{anator4}
\log T_{Y(N)}(\tau)=
\FP_{s=0}\left(\frac{1}{s\Gamma(s)}\int_0^\infty
\Tr_{\reg}\left(e^{-t\Delta_{p,N}(\tau)}\right) t^{s-1} dt\right).
\end{equation}
Let $T>0$. We decompose the integral into the integrals over $[0,T]$
and $[T,\infty)$. The integral over $[T,\infty)$ is an entire function of $s$. 
Hence it follows that 
\begin{equation}\label{anator5}
\log T_{Y(N)}(\tau)
=\FP_{s=0}\left(\frac{1}{s\Gamma(s)}
\int_0^T K_N(t,\tau) t^{s-1}dt\right)+\int_T^\infty K_N(t,\tau) t^{-1}dt.
\end{equation}
To deal with the second integral, we show that there exist $C,c>0$ such that
\begin{equation}\label{est-larget7}
\frac{1}{\vol(Y(N))}\left|\Tr_{\reg}\left(e^{-t\Delta_{p,N}(\tau)}\right)\right|\le
C e^{-ct}
\end{equation}
for all $t\ge 1$, $p=0,\dots,d$, and $N\in\N$. To prove \eqref{est-larget7}
we use the definition \eqref{regtrace}  and the trace formula, which gives
\[
\Tr_{\reg}\left(e^{-t\Delta_{p,N}(\tau)}\right)=J_{\spec}(h_t^{\tau,p}\otimes \chi_{K(N)}).
\]
To estimate the right hand side we use the fine spectral expansion of
\cite{FLM1} and proceed as in \cite{MzM}. However, the important new feature is
that we need to control the dependence on $N$ of all constants appearing 
in the estimations. The main ingredients of the spectral side of the trace
formula are logarithmic derivatives of intertwining operators. Uniform 
estimations in $N$ of the relevant integrals containing the logarithmic
derivatives were obtained in \cite{FLM2}. These are essential for our purpose. 
Using \eqref{est-larget7} it follows that $\vol(Y(N))^{-1}$ times the second
integral in \eqref{anator5} is $O(e^{-cT})$, where the implied constants are
independent of $N$. 

To deal with the first term, we first show that, up to a term which is
$O(e^{-cT})$, we can replace $h_t^{\tau,p}$ by a function with compact support
$h_{t,T}^{\tau,p}$ with support depending on $T$ and which coincides with
$h_t^{\tau,p}$ in a neighborhood of $1\in G(\R)^1$. The proof of this result
uses again the fine expansion of the spectral side of the trace formula.
Next we use the geometric side of the trace formula. Let $J_{\unip}$ be
the unipotent contribution to the geometric side. Since $h_{t,T}^{\tau,p}$
has compact support, it follows that for sufficiently large $N$, the geometric
side equals $J_{\unip}(h_{t,T}^{\tau,p}\otimes\chi_{K(N)})$. Next we apply the
fine geometric expansion of \cite{Ar4}, which expresses 
$J_{\unip}(h_{t,T}^{\tau,p}\otimes\chi_{K(N)})$ as a finite sum of weighted orbital
integrals $J_M(\cO,h_{t,T}^{\tau,p}\otimes\chi_{K(N)})$ (see \eqref{fine-exp4}).
Here $M\in\cL$ and $\cO$ runs over the set of unipotent elements in $M(\Q)$ up to 
$M(\Q_S)$-conjugacy for $S=S(N)$ a suitable finite set of places. (If $G=\GL(n)$, the resulting equivalence classes are just the unipotent $M(\Q)$-conjugacy classes in $M(\Q)$.) The coefficients $a^M(S(N),\cO)$ appearing in the fine geometric
expansion depend on a sufficiently large set $S(N)$ of places of $\Q$. Then by
the decomposition formula \eqref{decomp1} for weighted orbital integrals, the
study of $J_M(\cO,h_{t,T}^{\tau,p}\otimes\chi_{K(N)})$ can be reduced to the study
of weighted orbital integrals at infinite place and at the finite places in $S(N)$. 
At the infinite place the weighted orbital integrals are of the form
$J_M^L(\cO_\infty, (h_{t,T}^{\tau,p})_Q)$, where $L\in\cL(M)$, $Q$ is a parabolic
subgroup of $G$ with Levi component $L$, and $(h_{t,T}^{\tau,p})_Q$ is defined by
\eqref{ct}. These integrals have been studied in
\cite{MzM}. By \cite[Proposition 12.3]{MzM}, $J_M^L(\cO_\infty, (h_{t,T}^{\tau,p})_Q)$
has an asymptotic expansion as $t\to 0$. So we can form its partial Mellin
transform \eqref{partial-mellin}, which is a meromorphic function of $s\in\C$. 
Then the constant term in the Laurent expansion is 
the contribution of $J_M^L(\cO_\infty, (h_{t,T}^{\tau,p})_Q)$ to the first term on
the right hand side of \eqref{anator5}. It is just a constant depending on 
$T$, but not $N$. We are left with the finite orbital integrals
$J_M^L(\cO_{\fin},(\chi_{K(N)})_Q)$. Again using the decomposition formula, the
study of these integrals can be reduced to study of integrals of the form
$J_M^{L_p}(\cO_p,\1_{K(N)_p,Q_p})$ at primes $p|N$. Now the point is that in the
case of $\GL(n)$ these integrals can be written as integrals  
over $N_p(\Q_p)$ with a certain weight factor, where $N_p$ is the unipotent 
radical of some parabolic subgroup in $L_p$ (see \eqref{local-int}). The analysis of the weight factors
leads to an estimation of these integrals, depending on $N$. For $M\neq G$ or $M=G$ and $\cO\neq1$,
they all decay in $N$ like $O(N^{-(n-1)}(\log N)^a)$ for some fixed $a>0$.
The final step is to estimate the constants $a^M(S(N),\cO)$ appearing in
the fine geometric expansion \eqref{fine-exp4}. For $\GL(n)$ such estimations
were obtained in \cite{Ma}. The final 
result is that  the contribution to first term of the right hand side of
\eqref{anator5} of the weighted orbital integrals 
$J_M(\cO,h_{t,T}^{\tau,p}\otimes\chi_{K(N)})$ with $M\neq Q$ times $\vol(Y(N))^{-1}$ 
decays like $N^{-(n-1)}(\log N)^a$ for some $a>0$ independent of $N$. For the
contribution of $(G,1)$ we get $\vol(Y(N))(t^{(2)}_{\widetilde X}(\tau)+ O(e^{-cT})$
This completes the proof of the first part of Theorem \ref{theo-main}. The
second statement follows from \cite[Proposition 5.2]{BV}.

We expect that Theorem \ref{theo-main} holds more general for congruence 
subgroups of classical groups. The main obstacle to extend the theorem to 
other groups is
the fine geometric expansion. At the moment, we only know how to estimate 
the coefficients $a^M(S(N),U)$ for $\GL(n)$. Nevertheless, we expect to be
able to overcome this problem. Therefore, we will 
work in each section with the most general assumptions possible. 

The paper is organized as follows. In section \ref{sec-prel} we fix notations
and recall some basic facts. In section \ref{sec-bochlapl} we state some
facts concerning heat kernels on symmetric spaces. In section \ref{sec-analtor}
we recall the definition of the regularized trace of the heat operator on
$Y(K_f)$ and we introduce the analytic torsion. In section \ref{sec-specside}
we review the refined expansion of the spectral side of the Arthur trace 
formula. The spectral side of the trace formula is used in section 
\ref{sec-large-time} to study  the large time
behavior of the regularized trace of the heat operator. The main point is to
derive estimations which are uniform in $K_f$. In section \ref{sec-short-time}
we study the behavior of the regularized trace as $t\to 0$. We use again the
spectral side of the trace formula to show  that, up to an exponentially  
decreasing term, we can replace the heat kernel by a compactly supported 
function. In section \ref{sec-geomside} we use the geometric side, applied to
the modified test function. It turns out that for principal congruence
subgroups $K(N)$ of sufficient high level $N\in\N$, only the unipotent 
contribution to the geometric side occurs. Then we use Arthur's fine
geometric expansion, which expresses the unipotent contribution in terms of
weighted orbital integrals. In section \ref{sec-padic} we derive estimations
for $p$-adic weighted orbital integrals. In the section \ref{sec-main-gln}
we prove our main result $\GL(n)$. Based on this result, we prove Theorem 
\ref{theo-main} in the final section \eqref{sec-main-sln}.

\noindent
{\bf Acknowledgment.} Section \ref{sec-main-sln} is due to Werner Hoffmann.
The authors are very grateful to him for the permission to include it in the
present paper.

\section{Preliminaries}\label{sec-prel}

Let $G$ be a reductive algebraic group defined
over $\Q$. We fix a minimal parabolic subgroup $P_0$ of $G$ 
defined over $\Q$ and a Levi decomposition $P_0=M_0\cdot N_0$, both defined 
over $\Q$. If $G=\GL(n)$, we choose $P_0$ to be the subgroup of upper 
triangular matrices of $G$, $N_0$ 
its unipotent radical, and $M_0$ the group of diagonal matrices in $G$.

Let $\cF$ be the set of parabolic subgroups of $G$ which contain 
$M_0$ and are defined over $\Q$. Let $\cL$ be the set of subgroups of $G$ 
which contain $M_0$ and are Levi components of groups in $\cF$. 
For any $P\in\cF$ we write
\[
P=M_PN_P,
\]
where $N_P$ is the unipotent radical of $P$ and $M_P$ belongs to $\cL$. 
Let $M\in\cL$. Denote by $A_M$ the $\Q$-split component of the center of $M$. 
Put $A_P=A_{M_P}$. Let $L\in\cL$ and assume that $L$ contains $M$. Then $L$ is
a reductive group defined over $\Q$ and $M$ is a Levi subgroup of $L$. We 
shall denote the set of Levi subgroups of $L$ which contain $M$ by $\cL^L(M)$.
We also write $\cF^L(M)$ for the set of parabolic subgroups of $L$, defined 
over $\Q$, which contain $M$, and $\cP^L(M)$ for the set of groups in $\cF^L(M)$
for which $M$ is a Levi component. Each of these three sets is finite. If 
$L=G$, we shall usually denote these sets by $\cL(M)$, $\cF(M)$ and $\cP(M)$.

Let $X(M)_\Q$ be the group of characters of $M$ which are defined over $\Q$. 
Put
\begin{equation}\label{liealg}
\af_{M}:=\Hom(X(M)_\Q,\R).
\end{equation}
This is a real vector space whose dimension equals that of $A_M$. Its dual 
space is
\[
\af_{M}^\ast=X(M)_\Q\otimes \R.
\]
 We shall write, 
\begin{equation}\label{liealg1}
\af_P=\af_{M_P},\;A_0=A_{M_0}\quad\text{and}\quad \af_0=\af_{M_0}.
\end{equation}
For $M\in\cL$ let $A_M(\R)^0$ be the connected component of the identity of
the group $A_M(\R)$. 
Let $W_0=N_{\G(\Q)}(A_0)/M_0$ be the Weyl group of $(G,A_0)$,
where $N_{G(\Q)}(H)$ is the normalizer of $H$ in $G(\Q)$.
For any $s\in W_0$ we choose a representative $w_s\in G(\Q)$.
Note that $W_0$ acts on $\levis$ by $sM=w_s M w_s^{-1}$. For $M\in\cL$ let
$W(M)=N_{\G(\Q)}(M)/M$, which can be identified with a subgroup of $W_0$.

For any $L\in\cL(M)$ we identify $\af_L^\ast$ with a subspace of $\af_M^\ast$.
We denote by $\af_M^L$ the annihilator of $\af_L^\ast$ in $\af_M$. 
We set
\[
\levis_1(M)=\{L\in\levis(M):\dim\aaa_M^L=1\}
\]
and
\begin{equation}\label{f1}
\cF_1(M)=\bigcup_{L\in\levis_1(M)}\cP(L).
\end{equation}
We shall denote the simple roots of $(P,A_P)$ by $\Delta_P$. They are
elements of $X(A_P)_\Q$ and are canonically embedded in $\af_P^\ast$. Let
$\Sigma_P\subset \af_P^\ast$ be the set of reduced roots of $A_P$ on the Lie
algebra of $G$. For any $\alpha\in\rts_M$ we denote by $\alpha^\vee\in\aaa_M$
the corresponding co-root. Let $P_1$ and $P_2$ be parabolic subgroups with
$P_1\subset P_2$. Then $\af_{P_2}^\ast$ is embedded into $\af_{P_1}^\ast$, while
$\af_{P_2}$ is a natural quotient vector space of $\af_{P_1}$. The group
$M_{P_2}\cap P_1$ is a parabolic subgroup of $M_{P_2}$. Let $\Delta_{P_1}^{P_2}$
denote the set of simple roots of $(M_{P_2}\cap P_1,A_{P_1})$. It is a subset
of $\Delta_{P_1}$. For a parabolic subgroup $P$ with $P_0\subset P$ we write
$\Delta_0^P:=\Delta_{P_0}^P$.

Let $\A$ be the ring of adeles of 
$\Q$ and $\A_{\fin}$ the ring of finite adeles of $\Q$.  We fix a maximal 
compact subgroup
$\K=\prod_\nu \K_\nu=K_\infty \K_{\fin}$ of $G(\A)=G(\R)G(\A_{\fin})$. 
We assume that the maximal 
compact subgroup $\K \subset G(\A)$ is admissible with respect to 
$M_0$ \cite[\S 1]{Ar5}.

Let $\Ht_M: M(\A)\rightarrow\aaa_M$ be the 
homomorphism given by
\begin{equation}\label{homo-M}
e^{\sprod{\chi}{\Ht_M(m)}}=\abs{\chi (m)}_\A = \prod_v\abs{\chi(m_v)}_v
\end{equation}
for any $\chi\in X(M)$. Let
\[
M(\A)^1 :=\{m\in M(\A)\colon \Ht_M(m)=0\}. 
\]
Let $\gf$ and $\kf$ denote the Lie algebras of $G(\R)$ and $K_\infty$,
respectively. Let $\theta$ be the Cartan involution of $G(\R)$ with respect to
$K_\infty$. It induces a Cartan decomposition $\mathfrak{g}= 
\mathfrak{p} \oplus \mathfrak{k}$. 
We fix an invariant bi-linear form $B$ on $\mathfrak{g}$ which is positive 
definite on $\mathfrak{p}$ and negative definite on $\mathfrak{k}$.
This choice defines a Casimir operator $\Omega$ on $G(\R)$,
and we denote the Casimir eigenvalue of any $\pi \in \Pi (G(\R))$ by 
$\lambda_\pi$. Similarly, we obtain
a Casimir operator $\Omega_{K_\infty}$ on $K_\infty$ and write $\lambda_\tau$ for 
the Casimir eigenvalue of a
representation $\tau \in \Pi (K_\infty)$ (cf.~\cite[\S 2.3]{BG}).
The form $B$ induces a Euclidean scalar product $(X,Y) = - B (X,\theta(Y))$ on 
$\mathfrak{g}$ and all its subspaces.
For $\tau \in \Pi (K_\infty)$ we define $\norm{\tau}$ as in 
\cite[\S 2.2]{CD}. Note that the restriction of the scalar product 
$(\cdot,\cdot)$ on $\gf$ to $\af_0$ gives $\af_0$ the structure of a 
Euclidean space. In particular, this fixes Haar measures on the spaces 
$\af_M^L$ and their duals $(\af_M^L)^\ast$. We follow Arthur in the 
corresponding normalization of Haar measures on the groups $M(\A)$ 
(\cite[\S 1]{Ar1}). 

Finally we introduce the space of Schwartz functions
$\Co(G(\A)^1)$ from \cite{FL1}. For any compact open subgroup
$K_f$ of $G(\A_f)$ the space $G(\A)^1/K_f$ is the countable disjoint union of
copies of 
\begin{equation}\label{gr1}
G(\R)^1=G(\R)\cap G(\A)^1
\end{equation}
and therefore, it is a differentiable
manifold. Any element $X\in\mathcal{U}(\gf^1_\infty)$ of the universal 
enveloping algebra of the Lie algebra $\gf_\infty^1$ of $G(\R)^1$ defines a
left invariant differential operator $f\mapsto f\ast X$ on $G(\A)^1/K_f$. Let
$\Co(G(\A)^1;K_f)$ be the space of smooth right $K_f$-invariant functions on
$G(\A)^1$ which belong, together with all their derivatives, to $L^1(G(\A)^1)$.
The space $\Co(G(\A)^1;K_f)$ becomes a Fr\'echet space under the seminorms
\[
\|f\ast X\|_{L^1(G(\A)^1)},\quad X\in\mathcal{U}(\gf^1_\infty).
\]
Denote by $\Co(G(\A)^1)$ the union of the spaces $\Co(G(\A)^1;K_f)$ as $K_f$ 
varies over the compact open subgroups of $G(\A_f)$ and endow 
$\Co(G(\A)^1)$ with the inductive
limit topology.

\section{Heat kernels}\label{sec-bochlapl}
\setcounter{equation}{0}

Since the heat kernel of the twisted Laplace operators plays a key role in
the paper, we summarize some basic facts about Bochner-Laplace operators
on global Riemannian symmetric spaces and their heat kernels. 
In this section we assume that $G$ is a connected semisimple group and $G(\R)$ is 
of noncompact type.  Then $G(\R)$ is a semisimple real Lie group of noncompact 
type. Let
$K_\infty\subset G(\R)$ be a maximal compact subgroup and
\[
\widetilde X=G(\R)/K_\infty
\]
the associated Riemannian symmetric space. 
Let $\Gamma\subset G(\R)$ be a torsion free lattice and let 
$X=\Gamma\bs\widetilde X$. 
Let $\nu$ be a finite-dimensional unitary representation of $K_\infty$ on 
$(V_{\nu},\left<\cdot,\cdot\right>_{\nu})$. Let
\begin{align*}
\widetilde{E}_{\nu}:=G(\R)\times_{\nu}V_{\nu}
\end{align*}
be the associated homogeneous vector bundle over $\widetilde{X}$. Then 
$\left<\cdot,\cdot\right>_{\nu}$ induces a $G(\R)$-invariant metric 
$\tilde{h}_{\nu}$ on $\tilde{E}_{\nu}$. Let $\widetilde{\nabla}^{\nu}$ be the 
connection on $\tilde{E}_{\nu}$ induced by the canonical connection on the
principal $K_\infty$-fiber bundle $G(\R)\to G(\R)/K_\infty$. Then 
$\widetilde{\nabla}^{\nu}$ is $G(\R)$-invariant.
Let  
\begin{align*}
E_{\nu}:=\Gamma\bs \widetilde E_\nu
\end{align*}
be the associated locally homogeneous vector bundle over $X$. Since 
$\tilde{h}_{\nu}$ and $\widetilde{\nabla}^{\nu}$ are $G(\R)$-invariant, they push 
down to a metric $h_{\nu}$ and a connection $\nabla^{\nu}$ on $E_{\nu}$. Let
$C^\infty(\widetilde X,\widetilde E_\nu)$ resp. $C^\infty(X,E_\nu)$ denote the 
space of smooth sections of $\widetilde E_\nu$, resp. $E_\nu$. 
Let
\begin{align}\label{globsect}
\begin{split}
C^{\infty}(G(\R),\nu):=\{f:G(\R)\rightarrow V_{\nu}\colon f\in C^\infty,\:
f(gk)=&\nu(k^{-1})f(g),\\
&\forall g\in G(\R), \,\forall k\in K_\infty\},
\end{split}
\end{align}
Let $L^2(G(\R),\nu)$ be the corresponding $L^2$-space. There is a canonical
isomorphism
\begin{equation}\label{iso-glsect}
\widetilde A\colon C^\infty(\widetilde X,\widetilde E_\nu)\cong 
C^\infty(G(\R),\nu),
\end{equation}
(see \cite[p. 4]{Mia}). $\widetilde A$ extends to an isometry of the 
corresponding $L^2$-spaces. Let
\begin{align}\label{globsect1}
C^{\infty}(\Gamma\backslash G(\R),\nu):=\left\{f\in C^{\infty}(G(\R),\nu)\colon 
f(\gamma g)=f(g)\:\forall g\in G(\R), \forall \gamma\in\Gamma\right\}
\end{align}
and let $L^2(\Gamma\bs G(\R),\nu)$ be the corresponding $L^2$-space. The
isomorphism \eqref{iso-glsect} descends to isomorphisms
\begin{equation}\label{iso-glsect1}
 A\colon C^{\infty}(X,E_{\nu})\cong C^{\infty}(\Gamma\backslash G(\R),\nu),\quad
L^2(X,E_\nu)\cong L^2(\Gamma\bs G(\R),\nu).
\end{equation}
Let
$\widetilde{\Delta}_{\nu}={\widetilde{\nabla^\nu}}^{*}{\widetilde{\nabla}}^{\nu}$
be the Bochner-Laplace operator of $\widetilde{E}_{\nu}$. This is a 
$G(\R)$-invariant second order elliptic differential operator whose principal
symbol is given by
\[
\sigma_{\widetilde\Delta_\nu}(x,\xi)=\|\xi\|^2_x\cdot\Id_{E_{\nu,x}},\quad 
x\in\widetilde X, \;\xi\in T^\ast_x(\widetilde X).
\] 
Since 
$\widetilde{X}$ is complete, 
$\widetilde{\Delta}_{\nu}$ with domain the smooth compactly supported sections
is essentially self-adjoint \cite[p. 155]{LM}. Its self-adjoint extension
will be denoted by $\widetilde{\Delta}_\nu$ too. Let $\Omega\in\cZ(\gf_\C)$ 
and $\Omega_{K_\infty}\in\cZ(\kf)$ be the Casimir operators of $\gf$ and
$\kf$, respectively, where the latter is defined with respect to  the
restriction of the normalized Killing form of $\mathfrak{g}$ to $\mathfrak{k}$.
Then with respect to the isomorphism \eqref{iso-glsect} we have
\begin{align}\label{BLO}
\widetilde{\Delta}_{\nu}=-R(\Omega)+\nu(\Omega_{K_\infty}),
\end{align} 
where $R$ denotes the right regular representation of $G(\R)$ in 
$C^\infty(G(\R),\nu)$ (see \cite[Proposition 1.1]{Mia}). 

Let $e^{-t\widetilde\Delta_{\nu}}$, $t>0$, be the heat semigroup generated by 
$\widetilde\Delta_\nu$. It commutes with the action of $G(\R)$. With respect
to the isomorphism \eqref{iso-glsect} we may regard $e^{-t\widetilde\Delta_{\nu}}$ as a
bounded operator in $L^2(G(\R),\nu)$, which commutes with the action of $G(\R)$.
Hence it is a convolution operator, i.e., there exists a smooth map
\begin{equation}\label{heatker1}
H_t^\nu\colon G(\R)\to \End(V_\nu)
\end{equation}
such that
\[
(e^{-t\widetilde\Delta_{\nu}}\phi)(g)=\int_{G(\R)} H_t^\nu(g^{-1}g^\prime)(\phi(g^\prime))\;
dg^\prime,\quad \phi\in L^2(G(\R),\nu).
\]
The kernel $H_t^\nu$ satisfies
\begin{align}\label{propH}
{H}^{\nu}_{t}(k^{-1}gk')=\nu(k)^{-1}\circ {H}^{\nu}_{t}(g)\circ\nu(k'),
\:\forall k,k'\in K, \forall g\in G.
\end{align}
Moreover, proceeding as in the proof of \cite[Proposition 2.4]{BM} it follows 
that $H^\nu_t$ belongs to
$(\ccC^{q}(G(\R))\otimes\End(V_{\nu}))^{K_\infty\times K_\infty}$ for all $q>0$, where 
$\ccC^q(G(\R))$ is Harish-Chandra's Schwartz space of $L^q$-integrable 
rapidly decreasing functions on $G(\R)$. 

Let $\pi$ be a unitary representation of $G(\R)$ on a Hilbert space $\cH_\pi$.
Define a bounded operator on $\cH_{\pi}\otimes V_{\nu}$ by
\begin{align}\label{Definiton von pi(tilde(H))}
\tilde{\pi}(H^{\nu}_{t}(g)):=\int_{G(\R)}\pi(g)\otimes H^{\nu}_{t}(g)\;dg.
\end{align}
Then relative to the splitting 
\begin{align*}
\mathcal{H}_{\pi}\otimes V_{\nu}=\left(\mathcal{H}_{\pi}\otimes 
V_{\nu}\right)^{K_\infty}\oplus\left(\left(\mathcal{H}_{\pi}\otimes 
V_{\nu}\right)^{K_\infty}\right)^{\bot},
\end{align*}
$\tilde{\pi}(H^{\nu}_{t})$ has the form
\begin{align*}
\begin{pmatrix}
\pi({H}^{\nu}_{t})&0\\0&0
\end{pmatrix},
\end{align*}
where $\pi(H^{\nu}_{t})$ acts on $\left(\mathcal{H}_{\pi}\otimes 
V_{\nu}\right)^{K_\infty}$.  Assume that $\pi$ is irreducible. Let $\pi(\Omega)$
be the Casimir eigenvalue of $\pi$. Then as in \cite[Corollary 2.2]{BM} it 
follows from \eqref{BLO} that
\begin{equation}\label{integop}
\pi(H_t^\nu)=e^{t(\pi(\Omega)-\nu(\Omega_{K_\infty}))}\Id,
\end{equation}
where $\Id$ is the identity on $\left(\mathcal{H}_{\pi}\otimes 
V_{\nu}\right)^{K_\infty}$. Put
\begin{equation}\label{loctrace}
h_t^\nu(g):=\tr H_t^\nu(g),\quad g\in G(\R).
\end{equation}
Then $h_t^\nu\in\ccC^q(G(\R))$ for all $q>0$. 
Let $\pi$ be a unitary representation of $G(\R)$. Put
\[
\pi(h_t^\nu)=\int_{G(\R)}h_t^\nu(g)\pi(g)\; dg.
\]
Assume that $\pi(H_t^\nu)$ is a trace class operator. Then it follows as in 
\cite[Lemma 3.3]{BM} that $\pi(h_t^\nu)$ is a trace class operator and
\begin{equation}\label{equ-tr}
\Tr\pi(h_t^\nu)=\Tr\pi(H_t^\nu).
\end{equation}
Now assume that $\pi$ is a unitary admissible representation. Let 
$A:\mathcal{H}_\pi\rightarrow \mathcal{H}_\pi$ be a bounded operator which is an
intertwining operator for $\pi|_{K}$. Then $A\circ\pi(h_t^\nu)$ is again a
finite rank
operator. Define an operator $\tilde{A}$ on $\mathcal{H}_\pi\otimes V_\nu$ by
$\tilde{A}:=A\otimes\Id$.
Then by the same argument as in \cite[Lemma 5.1]{BM} one has
\begin{equation}\label{equtrace}
\Tr \left(\tilde{A}\circ
\tilde{\pi}(H_t^\nu)\right)=\Tr\left(A\circ\pi(h_t^\nu)\right).
\end{equation}
Together with \eqref{integop} we obtain
\begin{equation}\label{TrFT}
\Tr\left(A\circ\pi(h_t^\nu)\right)=e^{t(\pi(\Omega)-\nu(\Omega_{K_\infty}))}
\Tr\left(\tilde{A}|_{(\mathcal{H}_{\pi}\otimes V_{\nu})^{K}}\right).
\end{equation}

Next we consider the twisted Laplace operator. 
Let $\tau$ be an irreducible finite dimensional representation of $G(\R)$ on 
$V_{\tau}$. Let $F_{\tau}$ be the flat vector bundle over $X$ associated to 
the restriction of $\tau$ to $\Gamma$. Let $\widetilde E_\tau$ be the 
homogeneous vector bundle over $\widetilde X$ associated to $\tau|_{K_\infty}$ 
and let $E_\tau:=\Gamma\bs \widetilde E_\tau$. There is a canonical isomorphism
\begin{equation}\label{iso-vb}
E_\tau\cong F_\tau
\end{equation}
\cite[Proposition 3.1]{MM}. By \cite[Lemma 3.1]{MM}, there exists an 
inner product $\left<\cdot,\cdot\right>$ on $V_{\tau}$ such that 
\begin{enumerate}
\item $\left<\tau(Y)u,v\right>=-\left<u,\tau(Y)v\right>$ for all 
$Y\in\mathfrak{k}$, $u,v\in V_{\tau}$
\item $\left<\tau(Y)u,v\right>=\left<u,\tau(Y)v\right>$ for all 
$Y\in\mathfrak{p}$, $u,v\in V_{\tau}$.
\end{enumerate}
Such an inner product is called admissible. It is unique up to scaling. Fix an 
admissible inner product. Since $\tau|_{K_\infty}$ is unitary with respect to 
this inner product, it induces a metric on $E_{\tau}$, and by \eqref{iso-vb} 
on $F_\tau$, which we also call 
admissible. Let $\Lambda^{p}(F_{\tau})=\Lambda^pT^*(X)\otimes F_\tau$. 
By \eqref{iso-vb} $\Lambda^{p}(F_{\tau})$ is isomorphic to the locally
homogeneous vector bundle associated to the representation
\begin{align}\label{repr4}
\nu_{p}(\tau):=\Lambda^{p}\Ad^{*}\otimes\tau:\:K_\infty\rightarrow\GL
(\Lambda^{p}\mathfrak{p}^{*}\otimes V_{\tau}).
\end{align}
The space of smooth section of $\Lambda^{p}(F_{\tau})$ is the space 
$\Lambda^{p}(X,F_{\tau})$ of $F_\tau$-valued $p$-forms.
By  \eqref{globsect1} there is a canonical isomorphism
\begin{equation}\label{iso-sect}
\Lambda^{p}(X,F_{\tau})\cong C^{\infty}(\Gamma\backslash G,\nu_{p}(\tau)).
\end{equation}
Let $\Delta_p(\tau)$ be the Laplace operator in $\Lambda^{p}(X,F_{\tau})$.
Let $R_\Gamma$ be the right regular representation of $G(\R)$ in
$C^{\infty}(\Gamma\backslash G,\nu_{p}(\tau))$ and $\Omega$ the Casimir 
element of $G(\R)$.
By \cite{MM} it follows that with respect to the isomorphism \eqref{iso-sect}
we have
\begin{equation}\label{twist-lapl}
\Delta_p(\tau)=-R_\Gamma(\Omega)+\tau(\Omega).
\end{equation}
 Let $\widetilde \Delta_p(\tau)$ be the lift of 
$\Delta_p(\tau)$ to the universal covering $\widetilde X$. It acts in the 
space $\Lambda^p(\widetilde X,\widetilde F_\tau)$ of $p$-forms on $\widetilde X$
with values in the pull back $\widetilde F_\tau$ of $F_\tau$. Then by
\eqref{iso-glsect} we have
\[
\Lambda^p(\widetilde X,\widetilde F_\tau)\cong C^\infty(G(\R),\nu_p(\tau)).
\]
and with respect to this isomorphism we also have
\[
\widetilde \Delta_p(\tau)=-R(\Omega)+\tau(\Omega),
\]
where $R$ is the regular representation of $G(\R)$ in 
$C^\infty(G(\R),\nu_p(\tau))$. Using \eqref{BLO} we obtain
\begin{equation}
\widetilde \Delta_p(\tau)=\widetilde\Delta_{\nu_p(\tau)}+\tau(\Omega)-
\nu_p(\tau)(\Omega_{K_\infty}).
\end{equation}
We note that $\widetilde \Delta_p(\tau)$ is a 
formally self-adjoint, non-negative, elliptic second order differential 
operator. Regarded as operator in the Hilbert space $L^2\Lambda^p(X,F_\tau)$
of square integrable $F_\tau$-valued $p$-forms on $X$ with domain the space of
compactly supported smooth $p$-forms, 
it has a unique self-adjoint extension  which we also denote by 
$\widetilde \Delta_p(\tau)$. This is a non-negative self-adjoint 
operator in $\Lambda^p(\widetilde X,\widetilde F_\tau)$. Let 
$e^{-t\widetilde\Delta_p(\tau)}$, $t>0$, be the heat semigroup generated by 
$\widetilde \Delta_p(\tau)$. It is well known that $e^{-t\widetilde\Delta_p(\tau)}$
is an integral operator with a smooth kernel. Since $\widetilde \Delta_p(\tau)$
commutes with the action of $G(\R)$, $e^{-t\widetilde\Delta_p(\tau)}$ is a convolution
operator with kernel
\begin{equation}\label{heat-kern}
H^{\tau,p}_t\colon G(\R)\to \End(\Lambda^p\pg^\star\otimes V_\tau),
\end{equation}
which belongs to $C^\infty\cap L^2$, and satisfies the covariance property
\begin{equation}\label{covar}
H^{\tau,p}_t(k^{-1}gk')=\nu_p(\tau)(k)^{-1} H^{\tau,p}_t(g)\nu_p(\tau)(k')
\end{equation}
with respect to the representation \eqref{repr4}. Moreover, for all $q>0$ we 
have 
\begin{equation}\label{schwartz1}
H^{\tau,p}_t \in (\mathcal{C}^q(G(\R))\otimes
\End(\Lambda^p\pf^*\otimes V_\tau))^{K_\infty\times K_\infty}, 
\end{equation}
where $\mathcal{C}^q(G(\R))$ denotes Harish-Chandra's $L^q$-Schwartz space
(see \cite[Sect. 4]{MP2}). Let $h^{\tau,p}_t\in C^\infty(G(\R))$ be defined by
\begin{equation}\label{tr-kern}
h^{\tau,p}_t(g)=\tr H^{\tau,p}_t(g),\quad g\in G(\R). 
\end{equation}
Then $h_t^{\tau,p}\in\mathcal{C}^q(G(\R))$ for all $q>0$. 

\section{Analytic torsion}\label{sec-analtor}
\setcounter{equation}{0}

We briefly recall the definition of the analytic torsion. For details we
refer to \cite{MzM}. Let $G$ be a reductive algebraic group over $\Q$. Let
$K_\infty\subset G(\R)^1$  be a maximal compact subgroup and $\widetilde X=
G(\R)^1/K_\infty$. Let $K_f\subset G(\A_f)$ be an open compact subgroup. Let
\begin{equation}\label{adelic-quot2}
X(K_f):=G(\Q)\bs (\widetilde X\times G(\A_f))/K_f
\end{equation}
be the adelic quotient. It is the disjoint union of finitely many components
$\Gamma_i\bs \widetilde X$, where $\Gamma_i\subset G(\Q)$, $i=1,\ldots,m$, are
arithmetic subgroups. Let $\tau\in\Rep(G(\R)^1)$. Denote by $E_{\tau;i}$ the
locally flat vector bundle over $\Gamma_i\bs \widetilde X$, associated to
$\tau|_{\Gamma_i}$. Let $E_\tau$ be the disjoint union of the $E_{\tau;i}$. Then
$E_\tau$ is a flat vector bundle over $X(K_f)$. Let $\Delta_p(\tau)$ the 
Laplace operator on $E_\tau$-valued $p$-forms over $X(K_f)$. Let $h_t^{\tau,p}$
be the function defined by \eqref{tr-kern} and
let $\chi_{K_f}$ be the normalized characteristic function of $K_f$ in $G(\A_f)$
defined by \eqref{normal-chfct}. Put
\begin{equation}\label{testfct5}
\phi_t^{\tau,p}:=h_t^{\tau,p}\otimes \chi_{K_f}.
\end{equation}
Then  $\phi_t^{\tau,p}$ belongs to the adelic Schwartz space $\Co(G(\A)^1;K_f)$ 
(see section \ref{sec-prel}). Let $J_{\geo}(f)$, $f\in C_c^\infty(G(\A)^1)$ be 
the geometric side of the Arthur trace formula \cite{Ar1}. The distribution
$J_{\geo}$ extends to $\Co(G(\A)^1;K_f)$ (see \cite{FL1}). In 
\cite[(13.17)]{MzM} we defined the regularized trace of the heat operator
$e^{-t\Delta_p(\tau)}$ by
\begin{equation}\label{reg-trace7}
\Tr_{\reg}\left(e^{-t\Delta_p(\tau)}\right):=J_{\geo}(\phi_t^{\tau,p}).
\end{equation}
For the motivation for this definition we refer to \cite{MzM}. We only note
that if $X(K_f)$ is compact, then
$e^{-t\Delta_p(\tau)}$ is a trace class operator and the regularized trace is the
usual trace, which is equal to the spectral side of the trace formula. So in
this case, \eqref{reg-trace7} is just the trace formula. To define the
zeta function $\zeta_p(s,\tau)$ through the Mellin transform of the regularized
trace of the heat operator, we need to determine the asymptotic behavior of
$\Tr_{\reg}(e^{-t\Delta_p(\tau)})$ as $t\to\infty$ and $t\to 0$. This requires
additional assumptions. 

From now on we assume that $G=\GL(n)$ or $\SL(n)$. 
Let $\theta$ be the Cartan involution of $G(\R)^1$.
Let $\tau_\theta=\tau\circ\theta$. Assume that $\tau\neq\tau_\theta$. Then by
\cite[Proposition 13.4]{MzM} and the trace formula we have
\begin{equation}\label{large-time7}
\Tr_{\reg}\left(e^{-t\Delta_p(\tau)}\right)=O(e^{-ct})
\end{equation}
as $t\to\infty$. The existence of an asymptotic expansion as $t\to0$ follows
from \cite[Theorem 1.1]{MzM}. Assume that $K_f$ is contained in $K(N)$ for 
some $N\ge 3$. Then there is an asymptotic expansion
\begin{equation}\label{small-time5}
\Tr_{\reg}\left(e^{-t\Delta_p(\tau)}\right)\sim t^{-d/2}\sum_{j=0}^\infty a_jt^j+
t^{-(d-1)/2}\sum_{j=0}^\infty\sum_{i=0}^{r_j}b_{ij}t^{j/2}(\log t)^i
\end{equation}
as $t\to 0$. Thus, under the assumptions above, the integral
\begin{equation}\label{anal-tor8}
\zeta_p(s,\tau):=\frac{1}{\Gamma(s)}\int_0^\infty 
\Tr_{\reg}\left(e^{-t\Delta_p(\tau)}\right)t^{s-1} dt
\end{equation}
converges absolutely and uniformly on compact subsets of the half-plane
$\Re(s)>d/2$, and admits a meromorphic extension to the entire complex plane.
Due to the logarithmic terms in the expansion \eqref{small-time5}, the zeta
function $\zeta_p(s,\tau)$ may have a pole at $s=0$. The analytic torsion
is then defined by \eqref{analtor}. 

In the case of $G=\GL(3)$ we are able to determine the coefficients of the 
log-terms. This shows that the 
zeta functions definitely have a pole at $s=0$. However, the combination
$\sum_{p=1}^5 (-1)^p p \zeta_p(s;\tau)$ turns out to be holomorphic at $s=0$ 
(see \cite[sect. 14]{MzM}) and we can define the logarithm of the analytic 
torsion by
\[
\log T_{X(K_f)}(\tau)=\frac{d}{ds}
\left(\frac{1}{2}\sum_{p=1}^5 (-1)^p p \zeta_p(s;\tau)\right)\bigg|_{s=0}.
\]

\section{Review of the spectral side of the trace formula}
\label{sec-specside}
\setcounter{equation}{0}

In this section $G$ is an arbitrary reductive algebraic group over $\Q$.
Arthur's (non-invariant) trace formula is the equality 
\begin{equation}\label{tracef1}
J_{\geo}(f)=J_{\spec}(f),\quad f\in C_c^\infty(G(\A)^1),
\end{equation}
of two distributions on $G(\A)^1$, namely the equality of
the geometric side $J_{\geo}(f)$ and the spectral side 
$J_{\spec}(f)$ of the trace formula.
In this section we recall the definition of the spectral side, and in particular
the refinement of the spectral expansion obtained in \cite{FLM1}, which we need
for our purpose. Combining
\cite{FLM1} and \cite{FL1}, it follows that \eqref{tracef1} extends continuously
to $f\in\Co(G(\A)^1)$.

The main ingredient of the spectral side  are logarithmic derivatives of 
intertwining operators. We briefly recall the structure of the intertwining 
operators.

Let $P\in\cP(M)$. 
Let $U_P$ be the unipotent radical of $P$. 
Recall that we denote  by $\rts_P\subset\af_P^*$ the set of reduced roots of 
$A_M$ of the Lie algebra $\mathfrak{u}_P$ of $U_P$.
Let $\srts_P$ be the subset of simple roots of $P$, which is a basis for 
$(\af_P^G)^*$.
Write $\af_{P,+}^*$ for the closure of the Weyl chamber of $P$, i.e.
\[
\aaa_{P,+}^*=\{\lambda\in\aaa_M^*:\sprod{\lambda}{\alpha^\vee}\ge0
\text{ for all }\alpha\in\rts_P\}
=\{\lambda\in\aaa_M^*:\sprod{\lambda}{\alpha^\vee}\ge0\text{ for all }
\alpha\in\srts_P\}.
\]
Denote by $\modulus_P$ the modulus function of $P(\A)$.
Let $\bar\AF^2(P)$ be the Hilbert space completion of
\[
\{\phi\in C^\infty(M(\Q)U_P(\A)\bs G(\A)):\modulus_P^{-\frac12}\phi(\cdot x)\in
L^2_{\disc}(\Ai M(\Q)\bs M(\A)),\ \forall x\in G(\A)\}
\]
with respect to the inner product
\[
(\phi_1,\phi_2)=\int_{\Ai M(\Q)\bU_P(\A)\bs \G(\A)}\phi_1(g)
\overline{\phi_2(g)}\ dg.
\]
Let $\alpha\in\rts_M$.
We say that two parabolic subgroups $P,Q\in\cP(M)$ are \emph{adjacent} along 
$\alpha$, and write $P|^\alpha Q$, if $\rts_P\cap-\rts_Q=\{\alpha\}$.
Alternatively, $P$ and $Q$ are adjacent if the group $\langle P,Q\rangle$
generated by $P$ and $Q$ belongs to $\cF_1(M)$ (see \eqref{f1} for its
definition).
Any $R\in\cF_1(\M)$ is of the form $\langle P,Q\rangle$, where $P,Q$ are
the elements of $\cP(M)$ contained in $R$. We have $P|^\alpha Q$ with 
$\alpha^\vee\in\rts_P^\vee \cap\af^R_M$.
Interchanging $P$ and $Q$ changes $\alpha$ to $-\alpha$.

For any $P\in\cP(M)$ let $\Ht_P\colon G(\A)\rightarrow\af_P$ be the 
extension of $\Ht_M$ to a left $U_P(\A)$-and right $\K$-invariant map.
Denote by $\cA^2(P)$ the dense subspace of $\bar\cA^2(P)$ consisting of its 
$\K$- and $\zzz$-finite vectors,
where $\zzz$ is the center of the universal enveloping algebra of 
$\mathfrak{g} \otimes \C$.
That is, $\cA^2(P)$ is the space of automorphic forms $\phi$ on 
$U_P(\A)M(\Q)\bs G(\A)$ such that
$\modulus_P^{-\frac12}\phi(\cdot k)$ is a square-integrable automorphic form on
$\Ai M(\Q)\bs M(\A)$ for all $k\in\K$.
Let $\rho(P,\lambda)$, $\lambda\in\af_{M,\C}^*$, be the induced
representation of $G(\A)$ on $\bar\cA^2(P)$ given by
\[
(\rho(P,\lambda,y)\phi)(x)=\phi(xy)e^{\sprod{\lambda}{\Ht_P(xy)-\Ht_P(x)}}.
\]
It is isomorphic to the induced representation 
\[
\Ind_{P(\A)}^{G(\A)}\left(L^2_{\disc}(\Ai M(\Q)\bs M(\A))
\otimes e^{\sprod{\lambda}{\Ht_M(\cdot)}}\right).
\]

For $P,Q\in\cP(M)$ let
\[
M_{Q|P}(\lambda):\cA^2(P)\to\cA^2(Q),\quad\lambda\in\af_{M,\C}^*,
\]
be the standard \emph{intertwining operator} \cite[\S 1]{Ar9}, which is the 
meromorphic continuation in $\lambda$ of the integral
\[
[M_{Q|P}(\lambda)\phi](x)=\int_{U_Q(\A)\cap U_P(\A)\bs U_Q(\A)}\phi(nx)
e^{\sprod{\lambda}{\Ht_P(nx)-\Ht_Q(x)}}\ dn, \quad \phi\in\cA^2(P), \ x\in G(\A).
\]
Given $\pi\in\Pi_{\di}(M(\A))$, let $\cA^2_\pi(P)$ be the space of all 
$\phi\in\cA^2(P)$ for which the function 
$M(\A)\ni x\mapsto \delta_P^{-\frac{1}{2}}\phi(xg)$,
$g\in G(\A)$, belongs to the $\pi$-isotypic subspace of the space
$L^2(\Ai M(\Q)\bs M(\A))$.
For any $P\in\cP(\M)$ we have a canonical isomorphism of 
$G(\A_f)\times(\LieG_{\C},K_\infty)$-modules
\[
j_P:\Hom(\pi,L^2(\Ai M(\Q)\bs M(\A)))\otimes 
\Ind_{P(\A)}^{G(\A)}(\pi)\rightarrow\cA^2_\pi(P).
\]
If we fix a unitary structure on $\pi$ and endow 
$\Hom(\pi,L^2(\Ai M(\Q)\bs M(\A)))$ with the inner product 
$(A,B)=B^\ast A$
(which is a scalar operator on the space of $\pi$), the isomorphism $j_P$ 
becomes an isometry. 

Suppose that $P|^\alpha Q$.
The operator $M_{Q|P}(\pi,s):=M_{Q|P}(s\varpi)|_{\cA^2_\pi(P)}$, where $\varpi\in
\af^\ast_M$ is such that $\langle\varpi,\alpha^\vee\rangle=1$, admits a 
normalization by a global factor
$n_\alpha(\pi,s)$ which is a meromorphic function in $s$. We may write
\begin{equation} \label{normalization}
M_{Q|P}(\pi,s)\circ j_P=n_\alpha(\pi,s)\cdot j_Q\circ(\Id\otimes R_{Q|P}(\pi,s))
\end{equation}
where $R_{Q|P}(\pi,s)=\otimes_v R_{Q|P}(\pi_v,s)$ is the product
of the locally defined normalized intertwining operators and 
$\pi=\otimes_v\pi_v$
\cite[\S 6]{Ar9}, (cf.~\cite[(2.17)]{Mu2}). In many cases, the 
normalizing factors can be expressed in terms automorphic $L$-functions 
\cite{Sha1}, \cite{Sha2}. For example, let $G=\GL(n)$. Then
the global normalizing factors $n_\alpha$ can be expressed in terms
of Rankin-Selberg $L$-functions. The
known properties of these functions are collected and analyzed in 
\cite[\S\S 4,5]{Mu1}.
Write $M \simeq \prod_{i=1}^r \GL (n_i)$, where the root $\alpha$ is trivial on 
$\prod_{i \ge 3} \GL (n_i)$,
and let $\pi \simeq \otimes \pi_i$ with representations $\pi_i \in 
\Pi_{\disc}(\GL(n_i,\A))$.
Let $L(s,\pi_1\times\tilde\pi_2)$ be the completed Rankin-Selberg $L$-function 
associated to $\pi_1$ and $\pi_2$. It satisfies the functional equation
\begin{equation}\label{functequ}
L(s,\pi_1\times\tilde\pi_2)=\eps(\frac12,\pi_1\times\tilde\pi_2)
N(\pi_1\times\tilde\pi_2)^{\frac12-s}L(1-s,\tilde\pi_1\times\pi_2)
\end{equation}
where $\abs{\eps(\frac12,\pi_1\times\tilde\pi_2)}=1$ and $N(\pi_1\times\tilde
\pi_2)\in\N$ is the conductor. Then we have
\begin{equation}\label{rankin-selb}
n_\alpha (\pi,s) = \frac{L(s,\pi_1\times\tilde\pi_2)}{\eps(\frac12,\pi_1\times
\tilde\pi_2)N(\pi_1\times\tilde\pi_2)^{\frac12-s}L(s+1,\pi_1\times\tilde\pi_2)}.
\end{equation}

We now turn to the spectral side. Let $L \supset M$ be Levi subgroups in 
$\levis$, $P \in \PPP (M)$, and
let $m=\dim\aaa_L^G$ be the co-rank of $L$ in $G$.
Denote by $\bases_{P,L}$ the set of $m$-tuples $\bss=(\beta_1^\vee,\dots,
\beta_m^\vee)$
of elements of $\rts_P^\vee$ whose projections to $\af_L$ form a basis for 
$\af_L^G$.
For any $\bss=(\beta_1^\vee,\dots,\beta_m^\vee)\in\bases_{P,L}$ let
$\vol(\bss)$ be the co-volume in $\af_L^G$ of the lattice spanned by $\bss$ 
and let
\begin{align*}
\Xi_L(\bss)&=\{(Q_1,\dots,Q_m)\in\cF_1(M)^m: \ \ \beta_i^{\vee}\in\af_M^{Q_i}, 
\, i = 1, \dots, m\}\\&=
\{\langle P_1,P_1'\rangle,\dots,\langle P_m,P_m'\rangle): \ \ 
P_i|^{\beta_i}P_i', \, 
i = 1, \dots, m\}.
\end{align*}

For any smooth function $f$ on $\af_M^*$ and $\mu\in\af_M^*$ denote by 
$D_\mu f$ the directional derivative of $f$ along $\mu\in\af_M^*$.
For a pair $P_1|^\alpha P_2$ of adjacent parabolic subgroups in $\cP(M)$ write
\begin{equation}\label{intertw2}
\delta_{P_1|P_2}(\lambda)=M_{P_2|P_1}(\lambda)D_\varpi M_{P_1|P_2}(\lambda):
\cA^2(P_2)\rightarrow\cA^2(P_2),
\end{equation}
where $\varpi\in\af_M^*$ is such that $\sprod{\varpi}{\alpha^\vee}=1$.
\footnote{Note that this definition differs slightly from the definition of
$\delta_{P_1|P_2}$ in \cite{FLM1}.} Equivalently, writing
$M_{P_1|P_2}(\lambda)=\Phi(\sprod{\lambda}{\alpha^\vee})$ for a
meromorphic function $\Phi$ of a single complex variable, we have
\[
\delta_{P_1|P_2}(\lambda)=\Phi(\sprod{\lambda}{\alpha^\vee})^{-1}
\Phi'(\sprod{\lambda}{\alpha^\vee}).
\]
For any $m$-tuple $\dtup=(Q_1,\dots,Q_m)\in\Xi_L(\bss)$
with $Q_i=\langle P_i,P_i'\rangle$, $P_i|^{\beta_i}P_i'$, denote by 
$\Delta_{\dtup}(P,\lambda)$
the expression
\begin{equation}\label{intertw3}
\frac{\vol(\bss)}{m!}M_{P_1'|P}(\lambda)^{-1}\delta_{P_1|P_1'}(\lambda)M_{P_1'|P_2'}(\lambda) \cdots
\delta_{P_{m-1}|P_{m-1}'}(\lambda)M_{P_{m-1}'|P_m'}(\lambda)\delta_{P_m|P_m'}(\lambda)M_{P_m'|P}(\lambda).
\end{equation}
In \cite[pp. 179-180]{FLM1} we defined a (purely combinatorial) map $\dtup_L: \bases_{P,L} \to \FFF_1 (M)^m$ with the property that
$\dtup_L(\bss) \in \Xi_L (\bss)$ for all $\bss \in \bases_{P,L}$.\footnote{The map $\dtup_L$ depends in fact on the additional choice of
a vector $\underline{\mu} \in (\aaa^*_M)^m$ which does not lie in an explicit finite
set of hyperplanes. For our purposes, the precise definition of $\dtup_L$ is immaterial.}

For any $s\in W(M)$ let $L_s$ be the smallest Levi subgroup in $\levis(M)$
containing $w_s$. We recall that $\aaa_{L_s}=\{H\in\aaa_M\mid sH=H\}$.
Set
\[
\iota_s=\abs{\det(s-1)_{\aaa^{L_s}_M}}^{-1}.
\]
For $P\in\FFF(M_0)$ and $s\in W(M_P)$ let
$M(P,s):\AF^2(P)\to\AF^2(P)$ be as in \cite[p.~1309]{Ar3}.
$M(P,s)$ is a unitary operator which commutes with the operators $\rho(P,\lambda,h)$ for $\lambda\in\iii\aaa_{L_s}^*$.
Finally, we can state the refined spectral expansion.

\begin{theo}[\cite{FLM1}] \label{thm-specexpand}
For any $h\in C_c^\infty(G(\A)^1)$ the spectral side of Arthur's trace formula is given by
\begin{equation}\label{specside1}
J_{\spec}(h) = \sum_{[M]} J_{\spec,M} (h),
\end{equation}
$M$ ranging over the conjugacy classes of Levi subgroups of $G$ (represented by members of $\mathcal{L}$),
where
\begin{equation}\label{specside2}
J_{\spec,M} (h) =
\frac1{\card{W(M)}}\sum_{s\in W(M)}\iota_s
\sum_{\bss\in\bases_{P,L_s}}\int_{\iii(\aaa^G_{L_s})^*}
\tr(\Delta_{\dtup_{L_s}(\bss)}(P,\lambda)M(P,s)\rho(P,\lambda,h))\ d\lambda
\end{equation}
with $P \in \PPP(M)$ arbitrary.
The operators are of trace class and the integrals are absolutely convergent
with respect to the trace norm and define distributions on $\Co(G(\A)^1)$.
\end{theo}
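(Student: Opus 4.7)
My plan is to derive \eqref{specside1}--\eqref{specside2} from Arthur's original refined spectral expansion of $J_{\spec}$, in which the $[M]$-contribution is a sum over $s\in W(M)$ of integrals over $i(\af^G_{L_s})^*$ whose integrands are traces of $M(P,s)\rho(P,\lambda,h)$ composed with a rather opaque alternating combinatorial sum of products of intertwining operators $M_{Q|P}(\lambda)$ and their first-order logarithmic derivatives. The task is to collapse that alternating sum into the more symmetric form indexed by bases $\bss\in\bases_{P,L_s}$ and the combinatorial map $\dtup_{L_s}$.

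The heart of the matter is a purely combinatorial identity on $\af^*_{M,\C}$, which I would prove by induction on the co-rank $m=\dim\af^{L_s}_M$. Using the functional equation $M_{Q|P}(\lambda)^{-1}=M_{P|Q}(\lambda)$ together with the cocycle relation $M_{R|Q}M_{Q|P}=M_{R|P}$, one rewrites each chain in Arthur's sum as a telescoping product of the form \eqref{intertw3}. The decisive step is then to convert the $(n!)^{-1}$-weighted permutation sum into the volume-normalized sum $\vol(\bss)/m!$ over $\bases_{P,L_s}$; this is the combinatorial content of the main lemmas of \cite{FLM1}, and the map $\dtup_L$ singles out exactly one element of $\Xi_L(\bss)$ per basis so that no double counting occurs after the regrouping.

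Once the identity is established for $h\in C_c^\infty(G(\A)^1)$, the remaining work is analytic. The operator $\rho(P,\lambda,h)$ is of trace class for compactly supported $h$ because it factors through finitely many $K_\infty$-isotypic components whose multiplicities are controlled by the Casimir eigenvalue. For absolute convergence of the $\lambda$-integral one needs polynomial bounds on the trace norm of $\Delta_{\dtup_{L_s}(\bss)}(P,\lambda)M(P,s)\rho(P,\lambda,h)$. Via the normalization \eqref{normalization}, this reduces to bounds on the logarithmic derivatives of the global normalizing factors $n_\alpha(\pi,s)$ and to uniform boundedness of the local normalized operators $R_{Q|P}(\pi_v,s)$. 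For $\GL(n)$ both inputs are available: the normalizing factor is given explicitly by \eqref{rankin-selb}, and the required vertical-strip estimates for Rankin--Selberg $L$-functions are collected in \cite[\S\S 4,5]{Mu1}.

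Finally, to extend continuously from $C_c^\infty(G(\A)^1)$ to the Schwartz space $\Co(G(\A)^1)$, I would invoke the continuity arguments of \cite{FL1}: each term in \eqref{specside2} is dominated by finitely many of the seminorms $\|h*X\|_{L^1(G(\A)^1)}$, $X\in\cU(\gf^1_\infty)$, since differentiating $\rho(P,\lambda,h)$ on $G(\R)^1$ produces additional polynomial factors in $\lambda$ and in the Casimir eigenvalues of the automorphic representations, which are absorbed by Weyl-law bounds on the discrete spectrum. The hardest step in the whole argument is obtaining the uniform polynomial control on the logarithmic derivatives of the global intertwining operators, which is what distinguishes the refined expansion from Arthur's original one and underlies both the absolute convergence of the integrals in \eqref{specside2} and the continuous extension to $\Co(G(\A)^1)$.
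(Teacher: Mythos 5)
The paper does not actually prove this theorem: it is quoted verbatim from \cite{FLM1} (note the attribution in the theorem header), and the only thing the present paper adds is the remark, made just before Section \ref{sec-specside}'s statement, that the expansion extends continuously to $\Co(G(\A)^1)$ by combining \cite{FLM1} with \cite{FL1}. So your proposal can only be measured against the argument of \cite{FLM1}, whose overall architecture --- a combinatorial identity collapsing Arthur's expression for the $[M]$-term into the sum over $\bss\in\bases_{P,L_s}$, followed by analytic estimates giving absolute convergence in the trace norm --- you have identified correctly.

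Two points in your sketch are genuinely off. First, Arthur's starting expression is not ``an alternating sum of products of intertwining operators and their first-order logarithmic derivatives'': it is the value of a limit of a $(G,M)$-family built from the $M_{Q|P}(\lambda)$, which a priori involves derivatives of order up to $\dim\af_{L_s}^G$; the main combinatorial theorem of \cite{FLM1} is precisely that this limit can be expressed using only first-order derivatives arranged along chains of adjacent parabolic subgroups, which is what produces \eqref{intertw3}. Your induction-on-corank plan is the right shape, but as written the statement you propose to collapse already presupposes the conclusion. Second, and more seriously, the theorem is stated (and proved in \cite{FLM1}) for an arbitrary reductive group $G$ over $\Q$, whereas your convergence argument is given only for $\GL(n)$, where the normalizing factors are the explicit Rankin--Selberg quotients \eqref{rankin-selb}. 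For general $G$ no such formula is available, and the bound on $\int(1+\|\lambda\|)^{-k}\big\|\delta_{P|P'}(\lambda)\big\|\,d\lambda$ --- the ``hardest step'' you correctly single out --- is obtained in \cite{FLM1} by a different mechanism: a winding-number estimate for the global normalizing factors derived from the functional equation, boundedness in vertical strips and a finiteness argument, together with a separate uniform-boundedness statement for the local normalized operators $R_{Q|P}(\pi_v,s)$ on $K$-finite vectors. Without that input, the absolute convergence claim, and hence the asserted continuity on $\Co(G(\A)^1)$, is not established in the stated generality.
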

Note that the term corresponding to $M=G$ is $J_{\spec,G} (h) = \tr R_{\disc}(h)$. 
Next assume that $M$ is the Levi subgroup of a maximal parabolic subgroup $P$.
Furthermore, let $L=M$. Let $\bar P$ be the opposite parabolic subgroup to $P$. 
Then up to a constant, the contribution to the spectral side is given by
\[
\sum_{\pi\in\Pi_{\di}(M(\A)^1)}\int_{i\af^\ast}\tr(M_{\bar P|P}(\pi,\lambda)^{-1}
\frac{d}{dz}M_{\bar P|P}(\pi,\lambda)M(P,s)\rho(P,\pi,\lambda,h))\;d\lambda.
\]

\section{Large time behavior of the regularized trace}
\label{sec-large-time}
\setcounter{equation}{0}

The purpose of this section is to improve \eqref{large-time7} so that the
estimations are uniform with respect to $K_f$. To this end we use the trace 
formula \eqref{tracef1}. 
By Theorem \ref{thm-specexpand}, $J_{\spec}$  is a distribution on 
$\Co(G(\A);K_f)$ and by \cite[Theorem 7.1]{FL1}, $J_{\geo}$ is continuous on
$\Co(G(\A);K_f)$. This implies  that
\eqref{tracef1} holds for $\phi_t^{\tau,p}$. Using the definition
\eqref{reg-trace7} of the regularized trace and the trace formula we get
\begin{equation}\label{regtrace1a}
\Tr_{\reg}\left(e^{-t\Delta_p(\tau)}\right)=J_{\spec}(\phi_t^{\tau,p}).
\end{equation}
Now we apply Theorem \ref{thm-specexpand} to study the asymptotic behavior
as $t\to\infty$ of the right hand side. Let $M\in\cL$ and $P\in\cP(M)$.
Recall that 
$L^2_{\di}(A_M(\R)^0 M(\Q)\bs M(\A))$ splits as the completed direct sum of its
$\pi$-isotypic components for $\pi\in\Pi_{\di}(M(\A))$. We have a corresponding
decomposition of $\bar{\cA}^2(P)$ as a direct sum of Hilbert spaces
$\hat\oplus_{\pi\in\Pi_{\di}(M(\A))}\bar{\cA}^2_\pi(P)$. Similarly, we have the
algebraic direct sum decomposition
\[
\cA^2(P)=\bigoplus_{\pi\in\Pi_{\di}(M(\A))}\cA^2_\pi(P),
\]
where $\cA^2_\pi(P)$ is the ${\bf K}$-finite part of $\bar{\cA}^2_\pi(P)$. For 
$\sigma\in\widehat{K_\infty}$ let $\cA^2_\pi(P)^\sigma$ be the $\sigma$-isotypic
subspace. Then $\cA^2_\pi(P)$ decomposes as
\[
\cA^2_\pi(P)=\bigoplus_{\sigma\in\widehat{K_\infty}}\cA^2_\pi(P)^\sigma.
\]
Let $\cA^2_\pi(P)^{K_f}$ be the subspace of $K_f$-invariant functions in
$\cA^2_\pi(P)$, and for any $\sigma\in\widehat{K_\infty}$ let 
$\cA^2_\pi(P)^{K_f,\sigma}$ be the $\sigma$-isotypic subspace of 
$\cA^2_\pi(P)^{K_f}$. Recall that $\cA^2_\pi(P)^{K_f,\sigma}$ is finite dimensional.
Let $M_{Q|P}(\pi,\lambda)$ denote the restriction of $M_{Q|P}(\lambda)$ to
$\cA^2_\pi(P)$. Recall that the operator $\Delta_\chi(P,\lambda)$, which 
appears in the formula \eqref{specside2}, is defined by \eqref{intertw3}.
Its definition involves the intertwining operators $M_{Q|P}(\lambda)$. If we 
replace $M_{Q|P}(\lambda)$ by its restriction $M_{Q|P}(\pi,\lambda)$ to
$\cA^2_\pi(P)$, we obtain the restriction $\Delta_\chi(P,\pi,\lambda)$ of
$\Delta_\chi(P,\lambda)$ to $\cA^2_\pi(P)$. Similarly, let $\rho_\pi(P,\lambda)$
be the induced representation in $\bar{\cA}^2_\pi(P)$. 
Fix $\beta\in\bases_{P,L_s}$ and $s\in W(M)$.  Then for the integral 
on the right of \eqref{specside2} with $h=\phi_t^{\tau,p}$ we get
\begin{equation}\label{specside3}
\sum_{\pi\in\Pi_{\di}(M(\A))}\int_{i(\af^G_{L_s})^*}\Tr\left(
\Delta_{\dtup_{L_s}(\bss)}(P,\pi,\lambda)M(P,\pi,s)\rho_\pi(P,\lambda,\phi^{\tau,p}_t)
\right)\;d\lambda.
\end{equation}
Let $P,Q\in\cP(M)$ and $\nu\in\Pi(K_\infty)$.  Denote by 
$\widetilde M_{Q|P}(\pi,\nu,\lambda)$ the restriction of 
\[
M_{Q|P}(\pi,\lambda)\otimes\Id\colon \cA^2_\pi(P)\otimes V_{\nu}\to 
\cA^2_\pi(P)\otimes V_{\nu}
\]
to $(\cA^2_\pi(P)^{K_f}\otimes V_{\nu})^{K_\infty}$. Denote by 
$\widetilde\Delta_{\dtup_{L_s}(\bss)}(P,\pi,\nu,\lambda)$ and 
$\widetilde M(P,\pi,\nu,s)$ the corresponding restrictions. 
Let $m(\pi)$ denote the multiplicity with which $\pi$ occurs in the regular
representation of $M(\A)$ in $L^2_{\di}(M(\Q)\bs M(\A))$. Then
\begin{equation}\label{ind-repr}
\rho_\pi(P,\lambda)\cong \oplus_{i=1}^{m(\pi)}\Ind_{P(\A)}^{G(\A)}(\pi,\lambda).
\end{equation}
Let $\pi=\pi_\infty\otimes\pi_f$, where $\pi_\infty$ and $\pi_f$ are irreducible
unitary representations of $M(\R)$ and $M(\A_f)$, respectively. Then
\[
\Ind_{P(\A)}^{G(\A)}(\pi,\lambda)= \Ind_{P(\R)}^{G(\R)}(\pi_\infty,\lambda)\otimes
\Ind_{P(\A_f)}^{G(\A_f)}(\pi_f,\lambda).
\]
Let $\H(\pi_\infty)$ and $\H(\pi_f)$ denote the Hilbert space of $\pi_\infty$ and
$\pi_f$, respectively. Let $\omega(\pi_\infty,\lambda)$ be 
 the Casimir eigenvalue of the induced representation 
$\Ind_{P(\R)}^{G(\R)}(\pi_\infty,\lambda)$ and let $\Pi_{K_f}$ be the orthogonal
projection of $\H(\pi_f)$ onto the subspace $\H(\pi_f)^{K_f}$ of 
$K_f$-invariant vectors. Then by \eqref{twist-lapl} it follows 
that 
\[
\Ind_{P(\R)}^{G(\R)}(\pi_\infty,\lambda,h_t^{\tau,p})=e^{t(\tau(\Omega)-
\omega(\pi_\infty,\lambda))}\Id,
\]
where $\Id$ is the identity on $(\H(\pi_\infty)\otimes\Lambda^p\pg^\star\otimes
V_\tau)^{K_\infty}$.  Furthermore,
\[
\Ind_{P(\A_f)}^{G(\A_f)}(\pi_f,\lambda,\chi_{K_f})=\Pi_{K_f}.
\]
Let $\Pi_{K_f,\nu_p(\tau)}$ denote the orthogonal projection onto 
$\bar\cA_\pi^2(P)^{K_f,\nu_p(\tau)}$. Then it follows that
\begin{equation}\label{repr1}
\rho_\pi(P,\lambda,\phi_t^{\tau,p})=e^{t(\tau(\Omega)-
\omega(\pi_\infty,\lambda))}\Pi_{K_f,\nu_p(\tau)}.
\end{equation}
Fix positive restricted roots of $\af_P$ and let $\rho_{\af_P}$ denote the 
corresponding half-sum of these roots. For $\xi\in \Pi(M(\R))$ and
$\lambda\in\af^\ast_P$ let 
\[
\pi_{\xi,\lambda}:=\Ind_{P(\R)}^{G(\R)}(\xi\otimes e^{i\lambda})
\]
be the unitary induced representation. Let $\xi(\Omega_M)$ be the Casimir
eigenvalue of $\xi$. Define a constant $c(\xi)$ by
\begin{equation}\label{casimir4}
c(\xi):=-\langle\rho_{\af_P},\rho_{\af_P}\rangle+\xi(\Omega_M).
\end{equation}
Then for $\lambda\in\af^\ast_P$ one has
\begin{equation}\label{casimir5}
\pi_{\xi,\lambda}(\Omega)=-\|\lambda\|^2+c(\xi)
\end{equation}
(see \cite[Theorem 8.22]{Kn}). Let 
\begin{equation}\label{def-F}
\cT:=\{\nu\in\Pi(K_\infty)\colon [\nu_p(\tau)\colon\nu]\neq 0\}.
\end{equation}
Using \eqref{repr1} and \eqref{TrFT}, it follows that \eqref{specside3} 
is equal to
\begin{equation}\label{specside4}
\begin{split}
\sum_{\pi\in\Pi_{\di}(M(\A))}\sum_{\nu\in\cT}&e^{-t(\tau(\Omega)-c(\pi_\infty))}\\
&\int_{i(\af^G_{L_s})^*}e^{-t\|\lambda\|^2}\Tr\left(
\widetilde\Delta_{\dtup_{L_s}(\bss)}(P,\pi,\nu,\lambda)
\widetilde M(P,\pi,\nu,s)\right)\;d\lambda.
\end{split}
\end{equation}
 
Using that $M(P,\pi,s)$ is unitary, it follows that \eqref{specside4} can be
estimated by
\begin{equation}\label{est-specside}
\begin{split}
\sum_{\pi\in\Pi_{\di}(M(\A))}
\sum_{\nu\in\cT}&
\dim\left(\cA^2_\pi(P)^{K_f,\nu}\right)\\
&\cdot e^{-t(\tau(\Omega)-c(\pi_\infty))}\int_{i(\af^G_{L_s})^*}e^{-t\|\lambda\|^2}\|
\widetilde\Delta_{\dtup_{L_s}(\bss)}(P,\pi,\nu,\lambda)\|\;d\lambda.
\end{split}
\end{equation}
First we estimate the integral in \eqref{est-specside}.
Let $\bss=(\beta_1^\vee,\dots,\beta_m^\vee)$ and $\dtup_{L_s}(\bss)=
(Q_1,\dots,Q_m)\in\Xi_{L_s}(\bss)$ with with $Q_i=\langle P_i,P_i'\rangle$, 
$P_i|^{\beta_i}P_i'$, $i=1,\dots,m$.
 Using the definition \eqref{intertw3} of 
$\Delta_{\dtup_{L_s}(\bss)}(P,\pi,\nu,\lambda)$, it follows that we can bound the
integral by a constant multiple of
\begin{equation}\label{est-integral}
\dim(\nu)\int_{i(\af^G_{L_s})^*}e^{-t\|\lambda\|^2}\prod_{i=1}^m\left\| 
\delta_{P_i|P_i^\prime}(\lambda)\Big|_{\cA^2_\pi(P_i^\prime)^{K_f,\nu}}\right\|
\;d\lambda.
\end{equation}
We introduce new coordinates $s_i:=\langle\lambda,\beta_i^\vee\rangle$,
$i=1,\dots,m$, on $(\af^G_{L_s,\C})^\ast$. Using \eqref{normalization}, we
can write
\begin{equation}\label{delta}
\delta_{P_i|P_i^\prime}(\lambda)=\frac{n^\prime_{\beta_i}(\pi,s_i)}{n_{\beta_i}(\pi,s_i)}
+j_{P_i^\prime}\circ(\Id\otimes R_{P_i|P_i^\prime}(\pi,s_i)^{-1}
R^\prime_{P_i|P_i^\prime}(\pi,s_i))\circ j_{P_i^\prime}^{-1}.
\end{equation}
In \cite[Definition 5.2, Definition 5.9]{FLM2} two conditions, called (TWN)
and (BD), for an arbitrary reductive group have been formulated, which imply 
appropriate estimations for the
terms on the right. Furthermore, in \cite[Prop. 5.5, Prop. 5.15]{FLM2} it
was shown that the conditions (TWN) and (BD) both hold for $\GL(n)$ and
$\SL(n)$. Assume that the conditions (TWN) and (BD) hold for $G$. Then
as in \cite[(22)]{FLM2} this implies that for any $\epsilon>0$
and sufficiently large $k$ and $m$ one has
\begin{equation}\label{estim3}
\begin{split}
\int_{i(\af^G_{L_s})^*}(1+\|\lambda\|)^{-k}\prod_{i=1}^m\left\| 
\delta_{P_i|P_i^\prime}(\lambda)\Big|_{\cA^2_\pi(P_i^\prime)^{K_f,\nu}}\right\|
\;d\lambda
\ll_{\varepsilon,\cT}\Lambda_M(\pi_\infty;G_M)^m\level(K_f;G_M^+)^\varepsilon.
\end{split}
\end{equation}
for all $\nu\in\cT$. To estimate $\Lambda_M(\pi_\infty;G_M)$ we first recall 
Vogan's definition of a norm on $\|\cdot\|$ on $\Pi(K_\infty)$.
Let $\chi_\mu$ be the highest weight of an arbitrary irreducible constituent
of $\mu|_{K_\infty^0}$ with respect to a maximal torus of $K_\infty^0$ and the 
choice of a system of positive roots. Let $\rho$ be the half sum of all
positive roots with multiplicities. For $\mu\in\Pi(K_\infty)$ the norm $\|\mu\|$
is defined by $\|\mu\|=\|\chi_\mu+2\rho\|^2$. A minimal $K_\infty$-type of a 
representation of $G(\R)$ is then a $K_\infty$-type minimizing $\|\cdot\|$. 
For $\pi\in \Pi(M(\A))$ denote by $\lambda_{\pi_\infty}$ the Casimir 
eigenvalue of the restriction of $\pi_\infty$ to $M(\R)^1$. Let 
\begin{equation}\label{lowest-k-type}
\Lambda_\pi=\min_{\tau}\sqrt{\lambda_{\pi_\infty}^2+\lambda_\tau^2},
\end{equation}
where $\tau$ runs over the lowest $K_\infty$-types of the induced representation
$\Ind_{P(\R)}^{G(\R)}(\pi_\infty)$. Then by \cite[(10)]{FLM2} we have
\begin{equation}
1\le\Lambda_M(\pi;G_M)\le 1+\Lambda_\pi^2.
\end{equation}
Now observe that $\dim\cA^2_\pi(P)^{K_f,\nu}=0$, unless  
$[\Ind_{P(\R)}^{G(\R)}(\pi_\infty)|_{K_\infty}\colon \nu]>0$. Thus for a minimal 
$K_\infty$-type $\tau$ of $\Ind_{P(\R)}^{G(\R)}(\pi_\infty)$ one has $\lambda_\tau^2
\le\lambda_\nu^2$. Since $\cT$ is finite, there exists $C>0$ such that
\begin{equation}\label{minimal}
\Lambda_\pi\le C(1+|\lambda_{\pi_\infty}|)
\end{equation}
for all $\pi\in\Pi_{\di}(M(\A))$ with $\dim\cA^2_\pi(P)^{K_f,\nu}\neq 0$.
Thus it follows that for $t\ge 1$, \eqref{est-specside} can be estimated by a 
constant
times
\begin{equation}\label{est-specside1}
\sum_{\pi\in\Pi_{\di}(M(\A))}
\sum_{\nu\in\cT}\dim\left(\cA^2_\pi(P)^{K_f,\nu}\right)e^{-t(\tau(\Omega)-c(\pi_\infty))}
(1+|\lambda_{\pi_\infty}|)^m\level(K_f;G_M^+)^\varepsilon.
\end{equation}
To continue with the estimation, we need the following lemma.

\begin{lem}\label{est-casim}
Let $P=MAN$ be a parabolic subgroup of $G$ and let 
$K_{M,\infty}=M(\R)\cap K_\infty$. Let $(\tau,V_\tau)\in\Rep(G(\R))$. Assume that 
$\tau\ncong\tau_\theta$. There exists $\delta>0$ such that for all
$(\xi,W_\xi)\in\Pi(M(\R)^1)$ satisfying
$\dim(W_\xi\otimes\Lambda^p\pg^\ast\otimes V_\tau)^{K_{M,\infty}}\neq 0$
one has
\[
\tau(\Omega)-c(\xi)\ge \delta.
\]
\end{lem}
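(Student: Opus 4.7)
The plan is to reduce the inequality to the Kuga-type strong acyclicity bound of Bergeron--Venkatesh.

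First, I would rewrite $c(\xi)$ spectrally. Set $\pi_\xi := \Ind_{P(\R)}^{G(\R)}(\xi)$, which is a unitary representation of $G(\R)$ since $\xi$ is unitary. Taking $\lambda = 0$ in \eqref{casimir5} shows that $\Omega$ acts on every irreducible constituent of $\pi_\xi$ by the scalar $c(\xi)$.

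Next I would translate the hypothesis $(W_\xi \otimes \Lambda^p\pg^\ast \otimes V_\tau)^{K_{M,\infty}} \neq 0$ into a statement about the $K_\infty$-types of $\pi_\xi$. In the compact picture, $\pi_\xi|_{K_\infty}$ is isomorphic to $\Ind_{K_{M,\infty}}^{K_\infty}(\xi|_{K_{M,\infty}})$, so Frobenius reciprocity for the pair $(K_\infty, K_{M,\infty})$ produces an irreducible $K_\infty$-type $\sigma$ that occurs both in $\pi_\xi|_{K_\infty}$ and in $\Lambda^p\pg^\ast \otimes V_\tau$. Decomposing the admissible $\pi_\xi$ into irreducible unitary constituents, I would choose an irreducible summand $\pi$ containing $\sigma$; then $\pi(\Omega) = c(\xi)$, and it suffices to establish a uniform gap
\[
\tau(\Omega) - \pi(\Omega) \geq \delta > 0
\]
over all irreducible unitary $\pi \in \widehat{G(\R)}$ containing a $K_\infty$-type appearing in $\Lambda^p\pg^\ast \otimes V_\tau$.

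The uniform gap is exactly the strong acyclicity of $\tau$, and I would verify it as follows. The twisted Laplacian $\widetilde\Delta_p(\tau) = -R(\Omega) + \tau(\Omega)$ acting on $L^2\Lambda^p(\widetilde X, \widetilde E_\tau)$ is nonnegative, and by Plancherel for $G(\R)$ its spectrum equals the closure of the set of numbers $\tau(\Omega) - \pi(\Omega)$ where $\pi$ runs over the unitary irreducibles whose $K_\infty$-spectrum meets $\Lambda^p \pg^\ast \otimes V_\tau$. The hypothesis $\tau \ncong \tau_\theta$ is precisely Bergeron--Venkatesh's strong acyclicity condition for $\widetilde X = \SL(n,\R)/\SO(n)$; by \cite[\S 4, especially Lemma 4.1]{BV}, it implies that $\widetilde\Delta_p(\tau)$ has $L^2$-spectrum bounded below by a positive constant for each $p$, which is the desired $\delta$.

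The main obstacle is the final step: converting the pointwise strict inequality $\pi(\Omega) < \tau(\Omega)$ into a uniform positive gap $\delta$, since a priori one could imagine a sequence $\pi_n$ with $\pi_n(\Omega) \nearrow \tau(\Omega)$. Ruling this out requires the Vogan--Zuckerman classification of unitary $(\mathfrak g, K_\infty)$-modules carrying a $K_\infty$-type from the finite set occurring in $\Lambda^p\pg^\ast \otimes V_\tau$: their Casimir eigenvalues form a discrete subset of $(-\infty, \tau(\Omega)]$ missing the endpoint precisely when no $(\gf, K_\infty)$-cohomological representation with coefficients in $V_\tau$ exists, which for $\SL(n,\R)$ and $\GL(n,\R)$ is equivalent to $\tau \ncong \tau_\theta$. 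Once this classification input is in place, the lemma follows.
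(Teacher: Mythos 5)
Your proposal follows the same route as the paper: evaluate the Casimir on the unitarily induced representation $\pi_{\xi,0}=\Ind_{P(\R)}^{G(\R)}(\xi)$, where it acts by the scalar $c(\xi)$, transfer the hypothesis on $\xi$ to a statement about $K_\infty$-types of the induced representation via Frobenius reciprocity, and then invoke the strong acyclicity bound of Bergeron--Venkatesh (Lemma 4.1 of \cite{BV}) for $\tau\ncong\tau_\theta$. Two remarks on where you diverge. First, your decomposition of $\pi_{\xi,0}$ into irreducible unitary constituents is an unnecessary detour (though legitimate, since the induced representation is admissible and unitary, hence a discrete sum of irreducibles with finite multiplicities): the paper sidesteps it by observing that the proof of \cite[Lemma 4.1]{BV} uses only that $\pi$ is unitary and that $\pi(\Omega)$ is a scalar, both of which hold for the full induced representation $\pi_{\xi,\lambda}$, so no reduction to irreducibles is needed. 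Second, and more importantly, the ``main obstacle'' you identify in your last paragraph is not actually an obstacle: \cite[Lemma 4.1]{BV} already produces a \emph{uniform} $\delta>0$, obtained from a Parthasarathy/Kuga-type inequality that bounds $\tau(\Omega)-\pi(\Omega)$ below by an explicit positive quantity depending only on $\tau$, over all unitary $\pi$ whose $K_\infty$-spectrum meets $\Lambda^p\pg^\ast\otimes V_\tau$. Your proposed fix via discreteness of the relevant Casimir eigenvalues is in fact false as stated -- already for $\SL(2,\R)$ the spherical unitary principal and complementary series give a continuum of Casimir eigenvalues attached to a single $K_\infty$-type -- but this error is harmless because the uniformity it was meant to supply is already contained in the cited lemma. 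With the last paragraph deleted and the uniform gap taken directly from \cite[Lemma 4.1]{BV}, your argument is complete and matches the paper's.
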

\begin{proof}
First consider the case $P=G$. 
In the proof of Lemma 4.1 in \cite{BV} it is shown that there exists $\delta>0$
such that 
\begin{equation}\label{est-casimir1}
\tau(\Omega)-\pi(\Omega)\ge\delta
\end{equation}
for each irreducible unitary representation $\pi$ of $G(\R)$ for which
\[
\Hom_{K_\infty}(\Lambda^p\pg\otimes V_\tau,\pi)\neq 0.
\]
In fact, the proof goes through for every unitary representation $\pi$ of
$G(\R)$ such that $\pi(\Omega)$ is a scalar (see \cite[\S II, Prop. 6.12]{BW}. 

Now let $P=MAN$ be a proper parabolic subgroup of $G$. 
Let $\xi\in\Pi(M(\R)^1)$ with $\dim(W_{\xi}\otimes\Lambda^p\pg^\ast
\otimes V_\tau)^{K_{M,\infty}}\neq 0$ and $\lambda\in\af^\star$. 
Consider the induced representation $\pi_{\xi,\lambda}$. 
By Frobenius reciprocity and the assumption on $\xi$ we have
\[
\dim\left(W_\xi\otimes\Lambda^p\pg^\ast\otimes V_\tau\right)^{K_{M,\infty}}=
\dim\left(\H_{\xi,\lambda}\otimes\Lambda^p\pg^\ast\otimes V_\tau\right)^{K_{\infty}}
\neq 0.
\]
Recall that $\pi_{\xi,\lambda}(\Omega)$ is a scalar given by \eqref{casimir5}.
Thus by \eqref{est-casimir1}  it follows that
\[
\tau(\Omega)-\pi_{\xi,\lambda}(\Omega)\ge \delta.
\]
Using \eqref{casimir5} we obtain
\[
\tau(\Omega)-c(\xi)\ge \delta-\|\lambda\|^2
\]
for every $\lambda\in\af^\star$. Hence $\tau(\Omega)-c(\xi)\ge \delta$, which
proves the lemma.
\end{proof}

Given $\lambda>0$, let
\[
\Pi_{\di}(M(\A);\lambda):=\left\{\pi\in\Pi_{\di}(M(\A))\colon 
|\lambda_{\pi_\infty}|\le\lambda\right\}.
\]
Let $d=\dim M(\R)^1/K_{M,\infty}$. As in \cite[Proposition 3.5]{Mu1} it follows 
that for every 
$\nu\in\Pi(K_\infty)$ there exists $C>0$ such that
\begin{equation}\label{estim10}
\sum_{\pi\in\Pi_{\di}(M(\A))_\lambda}\dim\cA^2_\pi(P)^{K_f,\nu}\le C(1+\lambda^{d/2})
\end{equation}
for all $\lambda\ge 0$.

Put
\[
\cA^2_\pi(P)^{K_f,\cT}=\bigoplus_{\nu\in\cT}\cA^2_\pi(P)^{K_f,\nu},
\]
where $\cT$ is defined by \eqref{def-F}.

Let $\delta>0$ be as in Lemma \ref{est-casim}. Put $c=\delta/2$. It follows from
Lemma \ref{est-casim} that for $t\ge 1$, \eqref{est-specside1} can be estimated
by
\begin{equation}\label{est-specside2}
e^{-ct}\sum_{\pi\in\Pi_{\di}(M(\A))}
\sum_{\nu\in\cT}\dim\left(\cA^2_\pi(P)^{K_f,\nu}\right)e^{-t(\tau(\Omega)-c(\pi_\infty))/2}
(1+|\lambda_{\pi_\infty}|)^m\level(K_f;G_M^+)^\varepsilon,
\end{equation}
where $m\in\N$ is sufficiently large. 
Now observe that $\tau(\Omega)\ge 0$. Thus by \eqref{casimir4} we get
\begin{equation}\label{est-tau}
\tau(\Omega)-c(\pi_\infty)\ge -\lambda_{\pi_\infty}.
\end{equation}
By \cite[Lemma 13.2]{MzM} there are only finitely many 
$\pi\in \Pi_{\di}(M(\A))$ with $\cA^2_\pi(P)^{K_f,\cT}\neq 0$ and 
$-\lambda_{\pi_\infty}\le 0$. Decompose the sum over $\pi$ in 
\eqref{est-specside2} in two summands $\Sigma_1(t)$ and $\Sigma_2(t)$,
where in $\Sigma_1(t)$ the summation runs over all $\pi$ with 
$-\lambda_{\pi_\infty}\le 0$. Using \eqref{est-tau} it follows that for 
$-\lambda_{\pi_\infty}> 0$ we have
\[
\tau(\Omega)-c(\pi_\infty)\ge |\lambda_{\pi_\infty}|
\]
Thus for every $l\in\N$, $K_f$ and $t\ge 1$ we have
\begin{equation}\label{sum2}
\Sigma_2(t)\ll_l e^{-ct}\sum_{\substack{\pi\in\Pi_{\di}(M(\A))\\-\lambda_{\pi_\infty}
>0}}\sum_{\nu\in\cT}\dim\left(\cA^2_\pi(P)^{K_f,\nu}\right)
(1+|\lambda_{\pi_\infty}|)^{-l}\level(K_f,G_M^+)^\varepsilon.
\end{equation}
To estimate $\Sigma_1(t)$ we need the following lemma. 
\begin{lem}\label{lem-finite}
Let $\nu\in\Pi(K_\infty)$. There exists $C_1\in\R$ such that $C_1\le 
-\lambda_{\pi_\infty}$ for all $\pi\in\Pi_{\di}(M(\A))$ with 
$\cA^2_\pi(P)^{K_f,\nu}\neq 0$ for some open compact subgroup $K_f$ of $G(\A_f)$.
\end{lem}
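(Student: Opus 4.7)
The plan is to reduce the bound on $\lambda_{\pi_\infty}$ to a bound on the Casimir eigenvalues of the finitely many $K_{M,\infty}$-types forced upon $\pi_\infty$ by the hypothesis, where $K_{M,\infty}=M(\R)\cap K_\infty$.

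First, I would translate the hypothesis. If $\cA^2_\pi(P)^{K_f,\nu}\ne 0$, then by the canonical isomorphism $j_P$ of Section~\ref{sec-specside} the $K_\infty$-type $\nu$ occurs in the archimedean induced representation $\Ind_{P(\R)}^{G(\R)}(\pi_\infty)$. Using the Iwasawa decomposition $G(\R)=K_\infty P(\R)$ one has $\Ind_{P(\R)}^{G(\R)}(\pi_\infty)|_{K_\infty}\cong\Ind_{K_{M,\infty}}^{K_\infty}(\pi_\infty|_{K_{M,\infty}})$, and Frobenius reciprocity for compact groups then forces some irreducible constituent $\mu$ of the finite-dimensional $K_{M,\infty}$-representation $\nu|_{K_{M,\infty}}$ to occur in $\pi_\infty|_{K_{M,\infty}}$. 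The set $\cT_\nu\subset\Pi(K_{M,\infty})$ of such $\mu$ depends only on $\nu$ and is finite.

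Next, for any fixed $\mu\in\cT_\nu$ actually occurring in $\pi_\infty$, I use unitarity of $\pi_\infty$ to compare $\lambda_{\pi_\infty}$ to $\lambda_\mu$. Take bases $\{Y_i\}$ of $\mf\cap\kf$ and $\{X_j\}$ of $\mf\cap\pg$ orthonormal respectively for $-B|_{\mf\cap\kf}$ and $B|_{\mf\cap\pg}$, so that
\[
\Omega_M=-\sum_i Y_i^2+\sum_j X_j^2,\qquad \Omega_{K_{M,\infty}}=-\sum_i Y_i^2.
\]
By unitarity, $\pi_\infty(Y_i)$ and $\pi_\infty(X_j)$ are skew-adjoint. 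For any nonzero $v$ in the $\mu$-isotypic component of $\pi_\infty$,
\[
\lambda_{\pi_\infty}\|v\|^2=\langle\pi_\infty(\Omega_M)v,v\rangle=\lambda_\mu\|v\|^2-\sum_j\|\pi_\infty(X_j)v\|^2\le\lambda_\mu\|v\|^2,
\]
so $\lambda_{\pi_\infty}\le\lambda_\mu$. The split-central directions inside $\mf\cap\pg$, i.e.\ $\af_M$, contribute zero since $\pi\in\Pi_{\di}(M(\A))$ is by definition trivial on $A_M(\R)^0$, which is what makes the bound uniform over $\pi$.

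Finally, setting $C_1:=-\max_{\mu\in\cT_\nu}\lambda_\mu$, which is finite and depends only on $\nu$, yields $-\lambda_{\pi_\infty}\ge C_1$ for every $\pi$ satisfying the hypothesis and every $K_f$. There is no serious obstacle here: the argument is the standard unitarity bound on the Casimir eigenvalue on a fixed $K$-type, combined with finiteness of the branching $\nu|_{K_{M,\infty}}$, and the only point deserving attention is the vanishing of the split-central contribution, which is guaranteed by the defining constraint on $\Pi_{\di}(M(\A))$.
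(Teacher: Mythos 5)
Your proof is correct, and it reaches the paper's conclusion by a genuinely more direct route. Both arguments begin the same way: from $\cA^2_\pi(P)^{K_f,\nu}\neq 0$ one extracts, via the Iwasawa decomposition and Frobenius reciprocity, an irreducible constituent $\mu$ of $\nu|_{K_{M,\infty}}$ that occurs in $\pi_\infty|_{K_{M,\infty}}$, and the finiteness of this set of $\mu$'s is what makes the constant uniform. Where you diverge is in how you bound $\lambda_{\pi_\infty}$ by $\lambda_\mu$. The paper takes a detour through the automorphic realization: it uses $\dim\cH_{\pi_f}^{K_{M,f}}\neq 0$ to embed $\pi_\infty$ into $L^2(A_M(\R)^0\Gamma_{M,i}\bs M(\R))$ for one of finitely many arithmetic subgroups $\Gamma_{M,i}$, identifies $-\lambda_{\pi_\infty}$ as an eigenvalue of the operator $\bar A_\tau=\Delta_\tau-\Lambda_\tau\Id$ on $L^2(\Gamma_M\bs M(\R)^1;\tau)$, and concludes from the non-negativity of the Bochner--Laplace operator $\Delta_\tau$. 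Your computation
$\lambda_{\pi_\infty}\|v\|^2=\lambda_\mu\|v\|^2-\sum_j\|\pi_\infty(X_j)v\|^2$
is exactly the algebraic content of that non-negativity (it is \eqref{BLO} in operator form), proved directly from skew-adjointness of $\pi_\infty(X_j)$, $\pi_\infty(Y_i)$; so you obtain the same inequality $-\lambda_{\pi_\infty}\ge-\lambda_\mu$ and the same constant $C_1=-\max_\mu\lambda_\mu$ without ever invoking the decomposition of $M(\Q)\bs M(\A)/K_{M,f}$ into arithmetic quotients. This buys generality (the bound holds for any unitary representation of $M(\R)$ containing some $\mu\in\cT_\nu$, automorphic or not) and economy; the paper's version buys nothing extra here beyond staying within the geometric language of Bochner--Laplace operators already set up in Section \ref{sec-bochlapl}. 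You are also right to flag the split-central directions: since $\pi\in\Pi_{\di}(M(\A))$ lives on $A_M(\R)^0M(\Q)\bs M(\A)$, the $\af_M$-directions act by zero and $\pi_\infty(\Omega_M)$ agrees with the Casimir of $M(\R)^1$, which is the eigenvalue $\lambda_{\pi_\infty}$ the lemma refers to.
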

\begin{proof}
Let $\pi\in\Pi_{\di}(M(\A))$. Let $K_f$ be an open compact subgroup of $G(\A_f)$
such that $\cA^2_\pi(P)^{K_f,\nu}\neq 0$. Let $\cH_P(\pi_\infty)$ (resp.
$\cH_P(\pi_f)$) be the Hilbert space of the
induced representation $I^{G(\R)}_{P(\R)}(\pi_\infty)$ 
(resp. $I_{P(\A_f)}^{G(\A_f)}(\pi_f)$). Let $\nu\in\Pi(K_\infty)$. Denote by  
$\cH_P(\pi_\infty)_\nu$ the $\nu$-isotypical subspace of $\cH_P(\pi_\infty)$.
Then by \cite[(3.5)]{Mu1} it follows that 
\begin{equation}\label{nicht0}
\dim (\cH_P(\pi_f)^{K_f})\neq0\;\text{and}\;\dim(\cH_P(\pi_\infty)_\nu)\neq0.
\end{equation}
Using Frobenius reciprocity \cite[p. 208]{Kn} we get
\[
[I^{G(\R)}_{P(\R)}(\pi_\infty)|_{K_\infty}\colon\nu]=\sum_{\tau\in\Pi(K_{M,\infty})}
[\pi_\infty|_{K_{M,\infty}}\colon\tau]\cdot[\nu|_{K_{M,\infty}}\colon\tau].
\]
This implies 
\[
\dim(\cH_P(\pi_\infty)_\nu)\le \dim(\nu)\sum_{\tau\in\Pi(K_{M,\infty})}
\dim(\cH_{\pi_\infty}(\tau))[\nu|_{K_{M,\infty}}\colon\tau].
\]
By \eqref{nicht0} it follows that there exists $\tau\in\Pi(K_{M,\infty})$ such
that 
\begin{equation}\label{nicht0a}
[\nu|_{K_{M,\infty}}\colon\tau]\neq0\;\;\text{and}\;\;\dim(\cH_{\pi_\infty}(\tau))\neq0.
\end{equation}
Replacing $K_f$ by a subgroup of finite index if necessary, we can assume 
that $K_f$ is of the form $K_f=\prod_{p<\infty} K_p$.  For any $p<\infty$ denote by $\cH_P(\pi_p)$ the Hilbert space of the
induced representation $I_{P(\Q_p)}^{G(\Q_p)}(\pi_p)$. Let $\cH_P(\pi_p)^{K_p}$ be the
subspace of $K_p$-invariant vectors. Then $\dim\cH_P(\pi_p)^{K_p}=1$ for almost
all $p$ and
\[
\cH_P(\pi_f)^{K_f}\cong\bigotimes_{p<\infty}\cH_P(\pi_p)^{K_p}.
\]
Let $K_{M,f}:=K_f\cap M(\A_f)$. Then $K_{M,f}$ is an open compact subgroup of 
$M(\A_f)$. Using \cite[(3.7)]{Mu1} and \eqref{nicht0},
it follows that $\dim\cH_{\pi_f}^{K_{M,f}}\neq0$. Now recall that
there exist arithmetic subgroups $\Gamma_{M,i}\subset M(\R)$, $i=1,\dots,l$, 
such that 
\[
M(\Q)\bs M(\A)/K_{M.f}\cong \bigsqcup_{i=1}^l(\Gamma_{M,i}\bs M(\R))
\]
(cf. \cite[\S3]{MzM}). Hence
\[
L^2(A_M(\R)^0M(\Q)\bs M(\A))^{K_{M,f}}\cong \bigoplus_{i=1}^l 
L^2(A_M(\R)^0\Gamma_{M,i}\bs M(\R))
\]
as $M(\R)$-modules. The condition $\dim\cH_{\pi_f}^{K_{M,f}}\neq0$ implies 
that $\pi_\infty$ occurs as an
irreducible subrepresentation of the right regular representation of $M(\R)$
in $L^2(A_M(\R)^0\Gamma_{M,i}\bs M(\R))$ for some $i=1,\dots,l$. Put $\Gamma_M:=
\Gamma_{M,i}$. Let $\Omega_{M(\R)^1}$ be the Casimir element of $M(\R)^1$. Given
$\tau\in\Pi(K_{M,\infty})$, let $A_\tau$ be the differential operator in
$C^\infty(\Gamma_M\bs M(\R)^1;\tau)$ which is induced by $-\Omega_{M(\R)^1}$.
Let $\bar A_\tau$ be the self-adjoint extension of $A_\tau$ in $L^2$. Assume
that $\tau$ satisfies \eqref{nicht0a}. Then it follows that $-\lambda_{\pi_\infty}$
is an eigenvalue of $\bar A_\tau$, acting in $L^2(\Gamma_M\bs M(\R)^1;\tau)$. 
Now let $\Delta_\tau$ be the Bochner-Laplace operator, acting in the same
Hilbert space. Let $\Lambda_\tau$ be the Casimir eigenvalue of $\tau$. We have
\[
\bar A_\tau=\Delta_\tau-\Lambda_\tau\Id
\]
(cf. \cite[Proposition 1.1]{Mia}). Furthermore note that $\Lambda_\tau\ge 0$
and $\Delta_\tau\ge 0$. Thus it follows that 
$-\lambda_{\pi_\infty}\ge -\Lambda_\tau$. Let
\[
C_1=-\max\{\Lambda_\tau\colon \tau\in\Pi(K_{M,\infty}),\;\;[\nu|_{K_{M,\infty}}\colon
\tau]>0\}.
\]
Then the lemma holds with this choice of $C_1$.
\end{proof}
It follows from Lemma \ref{lem-finite} that there exists $C_1\in\R$, which 
depends on $\cT$, but is independent of $K_f$, such that 
$C_1\le-\lambda_{\pi_\infty}$ for all
$\pi\in \Pi_{\di}(M(\A))$ with $\cA^2_\pi(P)^{K_f,\cT}\neq 0$. Thus for every
$l\in\N$, $K_f$, and $t\ge 1$ we get
\begin{equation}\label{sum1}
\Sigma_1(t)\ll_l  e^{-ct}\sum_{\substack{\pi\in\Pi_{\di}(M(\A))\\-\lambda_{\pi_\infty}
\le 0}}\sum_{\nu\in\cT}\dim\left(\cA^2_\pi(P)^{K_f,\nu}\right)
(1+|\lambda_{\pi_\infty}|)^{-l}\level(K_f,G_M^+)^\varepsilon.
\end{equation}

Putting everything together we obtain the following lemma.
\begin{lem}\label{lem-trspec}
Suppose that $G$ satisfies properties (TWN) \cite[Definition 5.2]{FLM2} and (BD)
\cite[Definition 5.9]{FLM2}. Let $\tau\in \Rep(G(\R))$. Assume that $\tau\ncong\tau_\theta$. Let $M$ be a
proper Levi subgroup of $G$. There exists $c>0$, independent of $K_f$,
 and for every $l\in\N$ and
$\varepsilon>0$ there exists $C>0$, which is independent of $K_f$, such that
\[
|J_{\spec,M}(\phi_t^{\tau,p})|\le C e^{-ct}
\sum_{\pi\in\Pi_{\di}(M(\A))}\sum_{\nu\in\cT}\dim\left(\cA^2_\pi(P)^{K_f,\nu}\right)
(1+|\lambda_{\pi_\infty}|)^{-l}\level(K_f,G_M^+)^\varepsilon.
\]
for $t\ge 1$ and $p=0,\dots,d$. 
\end{lem}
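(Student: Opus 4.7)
The plan is to combine the ingredients already assembled before the statement of the lemma. Starting from the refined spectral expansion in Theorem \ref{thm-specexpand}, I would write $J_{\spec,M}(\phi_t^{\tau,p})$ as a finite sum over $s\in W(M)$ and $\bss\in\bases_{P,L_s}$ of integrals of the form \eqref{specside3}. Using the decomposition \eqref{ind-repr}, the fact that $\phi_t^{\tau,p}=h_t^{\tau,p}\otimes\chi_{K_f}$, and the identities \eqref{repr1} and \eqref{TrFT}, each such integral rewrites as \eqref{specside4}, so that the integrand carries the factor $e^{-t(\tau(\Omega)-c(\pi_\infty))}e^{-t\|\lambda\|^2}$. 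Taking absolute values, exploiting the unitarity of $\widetilde M(P,\pi,\nu,s)$ and bounding the trace on a finite-dimensional space by the product of the dimension and the operator norm, I arrive at \eqref{est-specside}, where the $\lambda$-integral is controlled by \eqref{est-integral}.

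The next step is to invoke the uniform bound \eqref{estim3}, which is the direct consequence of the standing hypotheses (TWN) and (BD). For $t\ge 1$ one has $e^{-t\|\lambda\|^2}\le C_k(1+\|\lambda\|)^{-k}$ for any $k$, so the Gaussian is absorbed into the polynomial weight of \eqref{estim3}. Combined with \eqref{minimal} relating $\Lambda_M(\pi_\infty;G_M)$ to $1+|\lambda_{\pi_\infty}|$, this yields the bound \eqref{est-specside1}, which has the shape we want except for a polynomial $(1+|\lambda_{\pi_\infty}|)^m$ in place of the desired $(1+|\lambda_{\pi_\infty}|)^{-l}$.

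The crucial step is to exploit Lemma \ref{est-casim}: since $\tau\ncong\tau_\theta$, there is $\delta>0$, independent of $\pi$ and $K_f$, with $\tau(\Omega)-c(\pi_\infty)\ge\delta$ whenever $\cA^2_\pi(P)^{K_f,\cT}\neq 0$. Setting $c:=\delta/2$, I factor $e^{-t(\tau(\Omega)-c(\pi_\infty))}=e^{-ct}\cdot e^{-t(\tau(\Omega)-c(\pi_\infty))/2}$ and use the residual exponential to beat the polynomial in $|\lambda_{\pi_\infty}|$. To do this cleanly, I split the $\pi$-sum according to the sign of $-\lambda_{\pi_\infty}$, as in \eqref{sum1}--\eqref{sum2}. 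In the range $-\lambda_{\pi_\infty}>0$, inequality \eqref{est-tau} gives $\tau(\Omega)-c(\pi_\infty)\ge|\lambda_{\pi_\infty}|$, so for $t\ge 1$ the residual exponential $e^{-|\lambda_{\pi_\infty}|/2}$ absorbs any power of $1+|\lambda_{\pi_\infty}|$, producing any desired factor $(1+|\lambda_{\pi_\infty}|)^{-l}$. In the complementary range $-\lambda_{\pi_\infty}\le 0$, Lemma \ref{lem-finite} shows that $-\lambda_{\pi_\infty}$ is bounded below by a constant $C_1$ depending only on $\cT$ (and in particular independent of $K_f$), so $|\lambda_{\pi_\infty}|$ is uniformly bounded and the factor $(1+|\lambda_{\pi_\infty}|)^{-l}$ can be inserted at the cost of a constant. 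Assembling the contributions from both ranges, together with the uniform $e^{-ct}$ and the $\level(K_f;G_M^+)^\varepsilon$ factor from \eqref{estim3}, gives exactly the asserted bound.

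The main obstacle is reconciling the polynomial growth in $|\lambda_{\pi_\infty}|$ that comes from the (TWN)+(BD) control of logarithmic derivatives of intertwining operators with the polynomial \emph{decay} factor $(1+|\lambda_{\pi_\infty}|)^{-l}$ required by the conclusion, all while keeping every constant uniform in the open compact subgroup $K_f$. Lemma \ref{est-casim} supplies the exponential margin that makes this reconciliation possible, and Lemma \ref{lem-finite} controls the exceptional range $-\lambda_{\pi_\infty}\le 0$ where the margin alone is not enough. Once these two ingredients are in place, the remaining work is purely book-keeping of constants, which by the uniform nature of \eqref{estim3} depend only on $\cT$, $\tau$, and $G$, and not on $K_f$.
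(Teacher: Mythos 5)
Your proposal is correct and follows essentially the same route as the paper: reduce via Theorem \ref{thm-specexpand}, \eqref{repr1} and \eqref{TrFT} to \eqref{est-specside}, control the $\lambda$-integral with \eqref{estim3} under (TWN) and (BD) together with \eqref{minimal}, then use Lemma \ref{est-casim} to extract the uniform $e^{-ct}$ and split the $\pi$-sum by the sign of $-\lambda_{\pi_\infty}$, handling the two ranges with \eqref{est-tau} and Lemma \ref{lem-finite} exactly as in \eqref{sum1}--\eqref{sum2}. No gaps to report.
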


We now specialize to the case of principal congruence subgroups. Fix a faithful
$\Q$-rational representation $\rho\colon G\to\GL(V)$ and a lattice $\Lambda$
in the representation space $V$ such that the stabilizer of $\widehat\Lambda =
\widehat \Z\otimes\Lambda \subset \A_f\otimes V$ in $G(\A_f)$ is the group $K_f$.
Since the maximal compact subgroups of $\GL(\A_f\otimes V)$ are precisely the
stabilizers of lattices, it is easy to see that such a lattice exists. For
$N\in\N$ let 
\[
K(N)=\{g\in G(\A_f)\colon \rho(g)v\equiv v \mod N\widehat\Lambda,\;v\in V\}
\]
be the principal congruence subgroup of level $N$, which is a factorizable
normal open subgroup of $\K_f$. Let 
\begin{equation}\label{adelic-quot}
Y(N):=G(\Q)\bs(\widetilde X\times G(\A_f)/K(N)
\end{equation}
be the adelic quotient associated to $K(N)$. Fix $P=M\cdot U\in \cP(M)$. By 
\eqref{ind-repr} have 
\begin{equation}\label{dim-auto1}
\begin{split}
\dim\cA^2_\pi(P)^{K(N),\nu}&=m_\pi\dim\Ind_{P(\A)}^{G(\A)}(\pi)^{(K(N),\nu)}\\
&=m_\pi \dim\Ind_{P(\R)}^{G(\R)}(\pi_\infty)^\nu \dim\Ind_{P(\A_f)}^{G(\A_f)}
(\pi_f)^{K(N)}.
\end{split}
\end{equation}
Note that $\dim\Ind_{P(\R)}^{G(\R)}(\pi_\infty)^\nu$ is bounded by $(\dim\nu)^2$.
Let $\K_f\subset G(\A_f)$ be the standard maximal compact subgroup. Let
$\Xi$ be a set of coset representatives for the double cosets
$(P(\A_f)\cap \K_f)\bs \K_f/K(N)$. Since
$K(N)$ is  a normal subgroup of $\K_f$ of finite index, it follows from
\cite[Lemme, III.2]{Re} that the map $\varphi\mapsto 
\left(\varphi(g)\right)_{g\in\Xi}$
defines an isomorphism
\[
\Ind_{P(\A_f)}^{G(\A_f)}(\pi_f)^{K(N)}\cong\oplus_{g\in\Xi}(\pi_f)^{P(\A_f)\cap K(N)}.
\]
Thus we get
\[
\dim\Ind_{P(\A_f)}^{G(\A_f)}(\pi_f)^{K(N)}\le [\K_f\colon (\K_f\cap P(\A_f))K(N)]
\dim(\pi_f^{K_M(N)}).
\] 
Using the factorization $\K_f\cap P(\A_f)=(K_f\cap M(\A_f))(\K_f\cap U(\A_f))$,
we can write
\[
\begin{split}
[\K_f\colon (\K_f\cap& P(\A_f))K(N)]=\vol(K_M(N))\vol(K(N))^{-1}
[\K_f\cap U(\A_f)\colon K(N)\cap U(\A_f)]^{-1}\\
&[K(N)\cap P(\A_f)\colon (K(N)\cap M(\A_f))(K(N)\cap U(\A_f)].
\end{split}
\]
The index $[K(N)\cap P(\A_f)\colon(K(N)\cap M(\A_f))(K(N)\cap U(\A_f)]$ is 
bounded independently of $N$. Furthermore, identifying $U$ with its Lie algebra
$\uf$ via the exponential map, which is an isomorphism of affine varieties,
it follows that there exist $C_1,C_2>0$ such that 
\[
C_1N^{-\dim U}\le [\K_f\cap U(\A_f)\colon K(N)\cap U(\A_f)]^{-1}\le C_2 N^{-\dim U}
\]
for all $N\in\N$. Therefore there exist $C>0$, independent of $N$, such that
\[
\Ind_{P(\A_f)}^{G(\A_f)}(\pi_f)^{K(N)}\le C N^{-\dim U}\vol(K(N))^{-1}\vol(K_M(N))
\dim\pi_f^{K_M(N)}.
\]
Let
\begin{equation}\label{test-fct5}
\phi_{t,N}^{\tau,p}=h_t^{\tau,p}\otimes \chi_{K(N)}.
\end{equation}
Then $\phi_{t,N}^{\tau,p}\in \Co(G(\A)^1,K(N))$. 
Combined with Lemma \ref{lem-trspec} and \eqref{dim-auto1} it follows that
there exists $C>0$ such that
\begin{equation}\label{est-trspec3}
\begin{split}
\frac{1}{\vol(Y(N))}|J_{\spec,M}(\phi_{t,N}^{\tau,p})|\le &C e^{-ct}
N^{-\dim U+\varepsilon}\\
&\cdot\vol(K_M(N))\sum_{\pi\in\Pi_{\di}(M(\A))^\cT}m_\pi 
(1+|\lambda_{\pi_\infty}|)^{-l}\dim \pi_f^{K_M(N)}.
\end{split}
\end{equation}
for all $t\ge 1$ and $N\in\N$. Here $\Pi_{\di}(M(\A))^\cT$ denotes the set of all $\pi\in\Pi_{\di}(M(\A))$ such that there exists $\nu\in\cT$ with $\cA_\pi^2(P)^\nu\neq0$. For an open compact subgroup 
$K_{M,f}\subset M(\A_f)$ let
$\mu^M_{K_f}$ be the measure on $\Pi(M(\R)^1)$ defined by
\[
\begin{split}
\mu^M_{K_f}=&\frac{\vol(K_{M,f})}{\vol(M(\Q)\bs M(\A)^1)}\\
&\hskip20pt\cdot\sum_{\pi\in\Pi(M(\A)^1)}\dim\Hom_{M(\A)^1}(\pi,L^2(M(\Q)\bs M(\A)^1))
\dim \pi_f^{K_{M,f}}\delta_{\pi_\infty}.
\end{split}
\]
It follows from \cite[Lemma 7.7]{FLM2},
together with \cite[Proposition 5.5]{FLM2} and \cite[Theorem 5.15]{FLM2} that
the collection of measures $\{\mu^M_{K_M(N)}\}_{N\in\N}$ is polynomially bounded
in the sense of \cite[Definition 6.2]{FLM2}. For $l\in\N$ let $g_{l,\cT}$ be
the non-negative function on $\Pi(G(\R))$ defined by
\[
g_{l,\cT}(\pi):=\begin{cases} (1+|\lambda_\pi|)^{-l},& \text{if}\;\pi\in
\Pi(G(\R))^\cT,\\ 0,& \text{otherwise}.
\end{cases}
\]
Then it follows from 
 \cite[Proposition 6.1, (4)]{FLM2} that there exists $l\in\N$, which depends
only on $\cT$,  such that
\begin{equation}\label{polynbd}
\mu^M_{K_f}(g_{l,\cT})=\frac{\vol(K_M(N))}{\vol(M(\Q)\bs M(\A)^1)}
\sum_{\pi\in\Pi_{\di}(M(\A))^{\cT}}(1+|\lambda_{\pi_\infty}|)^{-l} m_\pi 
\dim \pi_f^{K_M(N)}
\end{equation}
is bounded independently of $N\in\N$. Together with \eqref{est-trspec3} we 
obtain the following lemma.
\begin{lem}\label{lem-trspec1}
Suppose that $G$ satisfies properties (TWN) \cite[Definition 5.2]{FLM2} and
(BD) \cite[Definition 5.9]{FLM2}. 
Let $M\in\cL$, $M\neq G$. Let $P=M\cdot U\in\cP(M)$ and let $\tau\in
\Rep(G(\R))$ such that $\tau\ncong\tau_\theta$. 
There exist $C,c,\delta>0$ such that
\begin{equation}\label{est-trspec4}
\frac{1}{\vol(Y(N))}|J_{\spec,M}(\phi_{t,N}^{\tau,p})|\le C e^{-ct} 
N^{-\delta}
\end{equation}
for all $t\ge 1$, $p=0,\dots,d$, and $N\in\N$.
\end{lem}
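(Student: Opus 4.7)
The plan is to combine the bound \eqref{est-trspec3} already established with the uniform boundedness statement \eqref{polynbd}. All the heavy lifting has been done in the preceding discussion: we have Lemma \ref{lem-trspec} giving the bound in terms of a sum over $\pi \in \Pi_{\di}(M(\A))$, and the subsequent analysis has reduced the $K(N)$-invariant dimensions appearing in that sum to the quantity $\vol(K_M(N)) \dim \pi_f^{K_M(N)}$, at the cost of a factor $N^{-\dim U + \varepsilon}$ coming from the index computation $[\mathbf{K}_f \cap U(\A_f) \colon K(N) \cap U(\A_f)] \asymp N^{\dim U}$.

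First, I would take the bound \eqref{est-trspec3} and observe that the sum
\[
\vol(K_M(N)) \sum_{\pi \in \Pi_{\di}(M(\A))^{\cT}} m_\pi (1+|\lambda_{\pi_\infty}|)^{-l} \dim \pi_f^{K_M(N)}
\]
is, up to the constant factor $\vol(M(\Q)\bs M(\A)^1)$, exactly the quantity $\mu^M_{K_M(N)}(g_{l,\cT})$ appearing in \eqref{polynbd}. By the cited polynomial boundedness result from \cite[Proposition 6.1, (4)]{FLM2}, applied in combination with \cite[Proposition 5.5, Theorem 5.15]{FLM2} which verify the input hypotheses under (TWN) and (BD), this quantity is bounded by a constant independent of $N$ once $l$ is chosen large enough depending only on $\cT$.

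Substituting this uniform bound into \eqref{est-trspec3} yields
\[
\frac{1}{\vol(Y(N))}|J_{\spec,M}(\phi_{t,N}^{\tau,p})| \le C' e^{-ct} N^{-\dim U + \varepsilon}
\]
for all $t \ge 1$ and $N \in \N$, with $C'$ and $c$ independent of $N$. Since $M$ is a \emph{proper} Levi subgroup, $U$ is a nontrivial unipotent group, and hence $\dim U \ge 1$. Choosing $\varepsilon < \dim U$ and setting $\delta := \dim U - \varepsilon > 0$ gives the desired bound $C e^{-ct} N^{-\delta}$.

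I do not expect a genuine obstacle here: the entire argument is a bookkeeping assembly of results already stated in the excerpt. The only subtle point to be careful about is that the parameter $l$ appearing in Lemma \ref{lem-trspec} must be compatible with (and can be taken equal to) the one for which the measures $\mu^M_{K_M(N)}$ are polynomially bounded in the sense of \cite[Definition 6.2]{FLM2}; since Lemma \ref{lem-trspec} allows $l$ to be taken arbitrarily large at the price of enlarging $C$, this compatibility is automatic. The strict positivity of $\dim U$ for $M \neq G$ is precisely what produces the power saving $N^{-\delta}$ and distinguishes the proper Levi contributions from the $M = G$ term.
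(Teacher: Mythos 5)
Your proposal is correct and follows exactly the paper's own argument: the paper likewise obtains the lemma by substituting the uniform bound \eqref{polynbd} on $\mu^M_{K_M(N)}(g_{l,\cT})$ into \eqref{est-trspec3} and using $\dim U\ge 1$ for a proper Levi to extract the power saving $N^{-\delta}$. Your remark about matching the parameter $l$ with the one from \cite[Proposition 6.1, (4)]{FLM2} is the right technical point and is handled the same way in the paper.
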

Now we consider the case $M=G$. Then by definition of $\phi_{t,N}^{\tau,p}$ we have
\begin{equation}\label{spec-G1}
\begin{split}
J_{\spec,G}(\phi_{t,N}^{\tau,p})=
\sum_{\pi\in\Pi_{\di}(G(\A)^1)} m_\pi\Tr\pi(\phi_{t,N}^{\tau,p})
=\sum_{\pi\in\Pi_{\di}(G(\A)^1)} m_\pi\dim(\pi_f^{K(N)})\Tr\pi_\infty(h_t^{\tau,p}).
\end{split}
\end{equation}
Now observe that by \cite[(4.18), (4.19)]{MP2} we have
\[
\Tr\pi_\infty(h_t^{\tau,p})=e^{t(\pi_\infty(\Omega)-\tau(\Omega))}\dim(\cH_{\pi_\infty}\otimes
\Lambda^p\pf^\star\otimes V_\tau)^{K_\infty}.
\]
Furthermore, for $\nu\in\Pi(K_\infty)$ we have
\[
[\pi_\infty|_{K_\infty}\colon\nu]\le \dim \nu
\]
(see \cite[Theorem 8.1]{Kn}). Thus there exists $C>0$ such that
\[
\frac{1}{\vol(Y(N))}|J_{\spec,G}(\phi_{t,N}^{\tau,p})|\le C \vol(K(N))
\sum_{\pi\in\Pi_{\di}(G(\A)^1)^{\cT}} m_\pi\dim(\pi_f^{K(N)}) 
e^{t(\pi_\infty(\Omega)-\tau(\Omega))}
\]
for all $t>0$ and $N\in\N$. As above, put $\lambda_{\pi_\infty}= 
\pi_\infty(\Omega)$. If we argue as in the proof of Lemma \ref{lem-trspec},
it follows that there exists $c>0$ and for all $l\in\N$ there exist $C_l>0$ 
such that
\begin{equation}
\begin{split}
\frac{1}{\vol(Y(N))}|J_{\spec,G}(\phi_{t,N}^{\tau,p})|\le &C_l e^{-ct}\vol(K(N))\\
&\cdot\sum_{\pi\in\Pi_{\di}(G(\A)^1)^{\cT}} m_\pi(1+|\lambda_{\pi_\infty}|)^{-l}
\dim(\pi_f^{K(N)}) 
\end{split}
\end{equation} 
for all $t\ge 1$ and $N\in\N$. Using that \eqref{polynbd} for $M=G$ , we get the following:
\begin{lem}\label{lem-trspec2}
Let $\tau\in\Rep(G(\R))$ such that $\tau\ncong\tau_\theta$. There exist
$C,c>0$ such that
\begin{equation}\label{est-trspec5}
\frac{1}{\vol(Y(N))}|J_{\spec,G}(\phi_{t,N}^{\tau,p})|\le C e^{-ct}
\end{equation}
for all $t\ge 1$, $p=0,\dots,d$, and $N\in \N$.
\end{lem}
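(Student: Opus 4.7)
The strategy is to follow the scheme of Lemma \ref{lem-trspec1}, but the argument simplifies substantially because for $M=G$ the spectral distribution equals $\tr R_{\disc}(h)$ and involves no intertwining operators, directional derivatives, or residual integrals over $i(\af^G_{L_s})^\ast$. Consequently the only archimedean input needed is the explicit evaluation of $\Tr\pi_\infty(h_t^{\tau,p})$, and no version of (TWN) or (BD) is required for this step.

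First, starting from \eqref{spec-G1}, I would combine \eqref{TrFT} (applied with $A=\Id$) with \eqref{twist-lapl} to obtain
\[
\Tr\pi_\infty(h_t^{\tau,p})=e^{t(\pi_\infty(\Omega)-\tau(\Omega))}\dim(\cH_{\pi_\infty}\otimes\Lambda^p\pf^\ast\otimes V_\tau)^{K_\infty}.
\]
Using the elementary bound $[\pi_\infty|_{K_\infty}\colon\nu]\le\dim\nu$ and the identity $\vol(Y(N))^{-1}=\vol(K(N))/\vol(G(\Q)\bs G(\A)^1)$, this reduces the lemma to showing that
\[
\vol(K(N))\sum_{\pi\in\Pi_{\di}(G(\A)^1)^\cT} m_\pi\dim(\pi_f^{K(N)})\,e^{t(\pi_\infty(\Omega)-\tau(\Omega))}\ll e^{-ct}
\]
uniformly for $t\ge 1$ and $N\in\N$; only $\pi$ with $\pi_\infty\in\Pi(G(\R))^\cT$ contribute since otherwise the $K_\infty$-invariants vanish, exactly as in \eqref{def-F}.

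The crucial input is Lemma \ref{est-casim} applied with $P=G$: under the hypothesis $\tau\ncong\tau_\theta$ there exists $\delta>0$ with $\tau(\Omega)-\pi_\infty(\Omega)\ge\delta$ for every $\pi_\infty\in\Pi(G(\R))^\cT$. Setting $c=\delta/2$, I would split the exponential as $e^{-ct}\cdot e^{-t(\tau(\Omega)-\pi_\infty(\Omega))/2}$. By Lemma \ref{lem-finite} specialised to $M=G$, the quantity $-\lambda_{\pi_\infty}=\pi_\infty(\Omega)$ admits a uniform lower bound on $\Pi_{\di}(G(\A)^1)^\cT$, so for any prescribed $l\in\N$ the second factor is majorised by $(1+|\lambda_{\pi_\infty}|)^{-l}$ up to an absolute constant: the finitely many $\pi_\infty$ with $-\lambda_{\pi_\infty}\le 0$ contribute harmlessly, while for the remaining ones the exponential decay in $|\lambda_{\pi_\infty}|$ dominates any polynomial uniformly for $t\ge 1$, precisely as in the derivation of \eqref{sum1} and \eqref{sum2}.

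To finish, I would invoke \eqref{polynbd} specialised to $M=G$, which for $l$ sufficiently large depending only on $\cT$ asserts that
\[
\vol(K(N))\sum_{\pi\in\Pi_{\di}(G(\A)^1)^\cT} m_\pi(1+|\lambda_{\pi_\infty}|)^{-l}\dim\pi_f^{K(N)}
\]
is bounded independently of $N$; this is just the polynomial boundedness of the family of measures $\{\mu^G_{K(N)}\}_N$ from \cite[Proposition 6.1, Lemma 7.7]{FLM2}. Combining the two estimates yields the claim with constants depending on $\cT$ but not on $N$ or $t$. Uniformity in $p\in\{0,\dots,d\}$ is automatic since each $\cT=\cT_p$ is finite and one takes the maximum of the $d+1$ constants thus produced. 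No ingredient beyond what was already developed for Lemmas \ref{lem-trspec} and \ref{lem-trspec1} is needed; the only point requiring attention is simply recognising that the $M=G$ component of the spectral side bypasses the intertwining machinery entirely, so the argument reduces to the Casimir gap plus polynomial boundedness.
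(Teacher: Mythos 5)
Your proposal is correct and follows essentially the same route as the paper: the explicit evaluation of $\Tr\pi_\infty(h_t^{\tau,p})$, the bound $[\pi_\infty|_{K_\infty}:\nu]\le\dim\nu$, the Casimir gap from Lemma \ref{est-casim} combined with the splitting argument of Lemma \ref{lem-trspec} (via Lemma \ref{lem-finite}), and finally the polynomial boundedness \eqref{polynbd} for $M=G$. The paper compresses the middle steps into the phrase ``argue as in the proof of Lemma \ref{lem-trspec},'' which is exactly what you carry out.
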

Combining Lemmas \ref{lem-trspec1}, Lemma \ref{lem-trspec2} and 
\eqref{specside1} it 
follows that there exist $C,c>0$ such that
\begin{equation}\label{est-specside5}
\frac{1}{\vol(Y(N))}|J_{\spec}(\phi_{t,N}^{\tau,p})|\le C e^{-ct}
\end{equation}
for all $t\ge 1$, $p=0,\dots,d$, and $N\in \N$.

Let $\Delta_{p,N}(\tau)$ be the Laplace operator on $E_\tau$-valued $p$-forms.
By \eqref{regtrace1a} we have
\[
\Tr_{\reg}\left(e^{-t\Delta_{p,N}(\tau)}\right)=J_{\spec}(\phi_{t,N}^{\tau,p})
\]
and by \eqref{est-specside5} we obtain the following bound:
\begin{prop}\label{prop-larget}
Suppose that $G$ satisfies properties (TWN) \cite[Definition 5.2]{FLM2} and
(BD) \cite[Definition 5.9]{FLM2}. There exist $C,c>0$ such that
\[
\frac{1}{\vol(Y(N))}|\Tr_{\reg}\left(e^{-t\Delta_{p,N}(\tau)}\right)|\le C e^{-ct}
\]
for all $t\ge 1$, $p=0,\dots,d$, and $N\in \N$.
\end{prop}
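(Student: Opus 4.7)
The proposition is the culmination of the previous two lemmas, so the plan is to assemble them via the trace formula and the spectral expansion. First I would invoke the definition \eqref{reg-trace7} of the regularized trace together with Arthur's trace formula \eqref{tracef1}. Since the test function $\phi_{t,N}^{\tau,p}=h_t^{\tau,p}\otimes\chi_{K(N)}$ lies in the adelic Schwartz space $\Co(G(\A)^1;K(N))$, and both $J_{\geo}$ and $J_{\spec}$ extend continuously to this space by \cite[Theorem 7.1]{FL1} and Theorem \ref{thm-specexpand}, one obtains the identity
\[
\Tr_{\reg}\bigl(e^{-t\Delta_{p,N}(\tau)}\bigr)=J_{\geo}(\phi_{t,N}^{\tau,p})=J_{\spec}(\phi_{t,N}^{\tau,p}),
\]
already recorded as \eqref{regtrace1a}. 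This reduces the problem to a uniform-in-$N$ estimate of the spectral side applied to $\phi_{t,N}^{\tau,p}$.

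The second step is to apply the refined spectral expansion \eqref{specside1}, which writes $J_{\spec}(\phi_{t,N}^{\tau,p})$ as a finite sum $\sum_{[M]}J_{\spec,M}(\phi_{t,N}^{\tau,p})$ indexed by the conjugacy classes of Levi subgroups of $G$. Since $G$ has only finitely many such classes, it suffices to bound each summand. For each proper Levi representative $M\neq G$, Lemma \ref{lem-trspec1} yields a bound $\vol(Y(N))^{-1}|J_{\spec,M}(\phi_{t,N}^{\tau,p})|\le Ce^{-ct}N^{-\delta}$ with $\delta>0$ independent of $N$, hence \emph{a fortiori} a bound of the form $Ce^{-ct}$. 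The remaining contribution $J_{\spec,G}(\phi_{t,N}^{\tau,p})$ is controlled directly by Lemma \ref{lem-trspec2}. Summing the finitely many resulting estimates produces \eqref{est-specside5}, which is exactly the required bound once one substitutes back via \eqref{regtrace1a}.

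I do not expect any genuine obstacle at this final step, since the heavy lifting has already been carried out in the course of establishing Lemmas \ref{lem-trspec1} and \ref{lem-trspec2}: namely, the uniform control of logarithmic derivatives of intertwining operators under hypotheses (TWN) and (BD) via \cite{FLM2}, the positivity of the spectral gap $\tau(\Omega)-c(\xi)\ge\delta$ from Lemma \ref{est-casim}, the qualitative lower bound on $-\lambda_{\pi_\infty}$ from Lemma \ref{lem-finite}, and the polynomial boundedness of the family of measures $\{\mu^M_{K_M(N)}\}_{N\in\N}$ used in \eqref{polynbd}. With all these ingredients in hand, the proof of Proposition \ref{prop-larget} amounts to the short deduction just sketched: invoke the trace formula to pass from the geometric definition of the regularized trace to the spectral side, decompose over the finite set of Levi conjugacy classes, and cite the two preceding lemmas.
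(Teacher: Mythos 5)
Your proposal is correct and follows exactly the paper's own argument: the identity \eqref{regtrace1a} reduces the claim to the spectral side, and the bound is then obtained by combining Lemma \ref{lem-trspec1} (for each proper Levi class) with Lemma \ref{lem-trspec2} (for $M=G$) over the finite sum in \eqref{specside1}, yielding \eqref{est-specside5}. Nothing is missing.
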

Recall that by \cite[Prop. 5.5, Prop 5.15]{FLM2} the properties (TWN) and
(BD) are satisfied for $\GL(n)$ and $\SL(n)$. Hence we get the following 
corollary.
\begin{cor}\label{cor-larget}
Let $G=\GL(n)$ or $\SL(n)$. There exist $C,c>0$ such that
\[
\frac{1}{\vol(Y(N))}|\Tr_{\reg}\left(e^{-t\Delta_{p,N}(\tau)}\right)|\le C e^{-ct}
\]
for all $t\ge 1$, $p=0,\dots,d$, and $N\in \N$.
\end{cor}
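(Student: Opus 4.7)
The plan is to deduce this corollary directly from Proposition \ref{prop-larget} by verifying that the groups $\GL(n)$ and $\SL(n)$ satisfy the two structural hypotheses appearing there, namely property (TWN) of \cite[Definition 5.2]{FLM2} and property (BD) of \cite[Definition 5.9]{FLM2}. Once both properties are in place for the group $G$ under consideration, the exponential bound on $\vol(Y(N))^{-1}|\Tr_{\reg}(e^{-t\Delta_{p,N}(\tau)})|$ is an immediate consequence of the proposition.

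Concretely, I would invoke \cite[Proposition 5.5]{FLM2}, which establishes (TWN) for $\GL(n)$ and $\SL(n)$ via the explicit description of the global normalizing factors as ratios of completed Rankin--Selberg $L$-functions (see \eqref{rankin-selb}) together with the available analytic properties of these $L$-functions. I would then invoke \cite[Proposition 5.15]{FLM2}, which establishes property (BD) for the same groups. Both statements are already quoted in the text immediately preceding the corollary, so the verification amounts to a pointer to these references.

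Since no new analysis is required, I do not anticipate any obstacle; the only role of the corollary is to specialize Proposition \ref{prop-larget} to the cases of interest in the remainder of the paper. The proof can therefore be compressed to one or two sentences, citing Propositions 5.5 and 5.15 of \cite{FLM2} and applying Proposition \ref{prop-larget}.
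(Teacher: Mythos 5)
Your proposal matches the paper's own derivation exactly: Corollary \ref{cor-larget} is obtained by citing \cite[Prop.~5.5, Prop.~5.15]{FLM2} for properties (TWN) and (BD) of $\GL(n)$ and $\SL(n)$ and then applying Proposition \ref{prop-larget}. No gap; nothing further is needed.
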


\section{Modification of the heat kernel}
\label{sec-short-time}
\setcounter{equation}{0}

In order to study the short time behavior of the regularized trace of 
the heat operator with the help of the trace formula, we need to show that 
we can replace $h_t^{\tau,p}$ by an appropriate compactly supported test 
function without changing the asymptotic behavior as $t\to0$. We introduced 
such a modification of $h_t^{\tau,p}$ already in \cite{MzM}. The main purpose of 
this section is to establish estimations which are uniform in the lattice. 

In this section we assume that $G=\GL(n)$. Let $G(\R)^1$ be defined by 
\eqref{gr1}.
Let $d(x,y)$ denote the geodesic distance 
of $x,y\in \widetilde X$. On $G(\R)^1$ we introduce the function $r$ by  
\[
r(g):=d(gK_\infty,K_\infty),\quad g\in G(\R)^1.
\]
For $R>0$ let
\begin{equation}\label{ball}
B_R:=\{g\in G(\R)^1\colon r(g)< R\}.
\end{equation}
We need the following auxiliary lemma.
\begin{lem}\label{volume}
There exist $C,c>0$ such that
\[
\int_{G(\R)^1} e^{-r^2(g)/t} dg\le C e^{ct}
\]
for $t>0$.
\end{lem}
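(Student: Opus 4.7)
The plan is to reduce the integral over $G(\R)^1$ to a one-variable integral via the Cartan (polar) decomposition and then exploit the standard fact that balls in a symmetric space of noncompact type have at most exponential volume growth.

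First, since $r(g)$ depends only on the coset $gK_\infty$, integrating out the right $K_\infty$-action converts the integral on $G(\R)^1$ to an integral on the symmetric space $\widetilde X = G(\R)^1/K_\infty$:
\[
\int_{G(\R)^1} e^{-r^2(g)/t}\,dg \;=\; \vol(K_\infty)\int_{\widetilde X} e^{-d(x,x_0)^2/t}\,dx,
\]
where $x_0 = eK_\infty$. Next I use the polar (Cartan) decomposition $G(\R)^1 = K_\infty \cdot \exp(\af^+) \cdot K_\infty$, under which the invariant volume form becomes $J(H)\,dH\,dk_1\,dk_2$ with Jacobian $J(H) = \prod_{\alpha\in\Sigma^+}\sinh(\alpha(H))^{m_\alpha}$. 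Since $r(\exp H) = \|H\|$, this gives
\[
\int_{\widetilde X} e^{-d(x,x_0)^2/t}\,dx \;=\; \vol(K_\infty)\int_{\af^+} e^{-\|H\|^2/t}\,J(H)\,dH.
\]

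Now I use the standard bound $J(H) \le C\, e^{2\rho(H)} \le C\, e^{a\|H\|}$ with $a = 2\|\rho\|$, valid on all of $\af^+$; equivalently, balls in $\widetilde X$ have at most exponential volume growth $\vol(B_R(x_0)) \le C e^{aR}$. Passing to polar coordinates $H = R\omega$ on the cone $\af^+$ (with $R = \|H\|$ and $\omega$ on a compact subset of the unit sphere), the integral is bounded by a constant multiple of
\[
\int_0^\infty R^{\rk-1}\, e^{-R^2/t + aR}\,dR.
\]

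The final step is to complete the square in the exponent: $-R^2/t + aR = -\tfrac{1}{t}(R - at/2)^2 + a^2 t/4$. Substituting $R = at/2 + u\sqrt{t}$ reduces the integral to a Gaussian in $u$ multiplied by a polynomial in $\sqrt{t}$, so the whole expression is bounded by $P(t)\, e^{a^2 t/4}$ for some polynomial $P$. Absorbing the polynomial into the exponential (at the cost of replacing $a^2/4$ by a slightly larger $c$) yields the desired bound $C e^{ct}$ for all $t > 0$.

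The only nontrivial ingredient is the exponential bound on $J(H)$, but this is entirely standard and follows immediately from $\sinh x \le \tfrac{1}{2}e^x$. No uniformity in any parameter other than $t$ is required, so there is no real obstacle here; the lemma is essentially a Gaussian-against-exponential-growth estimate.
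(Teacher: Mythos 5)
Your argument is correct and follows essentially the same route as the paper: Cartan decomposition, the bound $\prod_{\alpha}\sinh(\alpha(H))^{m_\alpha}\ll e^{c\|H\|}$, and then a one-dimensional Gaussian-versus-exponential estimate. The only cosmetic difference is that you complete the square by hand where the paper quotes the closed-form value $\tfrac{\sqrt{\pi t}}{2}e^{c^2t}(1-\erf(c\sqrt{t}))$ of the resulting integral; both give the required $Ce^{ct}$.
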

\begin{proof}
Note that $r(g)$ is bi-$K_\infty$-invariant. Thus using the Cartan 
decomposition $G(\R)^1=K_\infty A^+ K_\infty$, we get
\[
\int_{G(\R)^1} e^{-r^2(g)/t} dg=\int_{A^+} e^{-r^2(a)/t}\delta(a)da,
\]
where 
\[
\delta(\exp H)=\prod_{\alpha\in\Delta+}(\sinh \alpha(H))^{m_\alpha},\quad H\in\af^+
\]
(see \cite[Chapt. I, Theorem 5.8]{He}). Let $a=\diag(e^{H_1}, \ldots, e^{H_n})\in A^+$ so that $H_i-H_{i+1}>0$ for $i=1,\ldots, n-1$ and $\sum_{i=1}^n H_i=0$. 
Moreover, 
\[
 r(a)^2= H_1^2+\ldots+H_n^2
\]
by \cite[Corollary 10.42]{BH}. Note that there exists a constant $c>0$ such that $\delta(\exp H)\ll e^{c\|H\|}$ for every $H\in\af^+$. Hence it suffices to find an upper bound for $\int_0^\infty e^{-r^2/t} e^{c r}\, dr$.  
Note that
\[
\int_0^\infty e^{-r^2/t}e^{cr} dr=\frac{\sqrt{\pi t}}{2}
\exp(c^2t)(1-\erf(c\sqrt{t})),
\]
where $\erf(x)$ is the error function (see \cite[3.322,2]{GR}). This proves the 
claim.
\end{proof}

Let 
$f\in C^\infty(\R)$ such that $f(u)=1$, if $|u|\le 1/2$, and $f(u)=0$, if
$|u|\ge 1$. Let $\varphi_R\in C^\infty_c(G(\R)^1)$ be defined by
\begin{equation}\label{phiR}
\varphi_R(g):= f\left(\frac{r(g)}{R}\right).
\end{equation}
Then we have $\supp\varphi_R\subset B_R$. Extend $\varphi_R$ to $G(\R)$ by
\[
\varphi_R(g_\infty z)=\varphi_R(g_\infty),\quad g_\infty\in G(\R)^1,\; z\in 
A_G(\R)^0.
\]
Define $\widetilde h^{\tau,p}_{t,R}\in C^\infty(G(\R))$ by
\begin{equation}\label{test-funct1}
\widetilde h^{\tau,p}_{t,R}(g_\infty):= \varphi_R(g_\infty) h^{\tau,p}_t(g_\infty),
\quad g_\infty\in G(\R).
\end{equation}
Then the restriction of 
$\widetilde h_{t,R}^{\tau,p}\otimes\chi_{K(N)}$ to $G(\A)^1$ belongs to 
$C^\infty_c(G(\A)^1)$. Let $K(N)\subset\GL(n,\A_f)$ be the principal
congruence subgroup of level $N$ and let $Y(N)$ be the adelic quotient 
defined by \eqref{adelic-quot}.

\begin{prop}\label{prop-cptsupp}
There exist constants $C_1,C_2,C_3>0$ such that
\[
\frac{1}{\vol(Y(N))}\big|J_{\spec}(h^{\tau,p}_t\otimes\chi_{K(N)})- 
J_{\spec}(\widetilde h^{\tau,p}_{t,R}\otimes\chi_{K(N)})\big|\le C_1 e^{-C_2R^2/t+C_3t}
\]
for all $N\in\N$, $p=0,\dots,d$, $t>0$ and $R\ge 1$.
\end{prop}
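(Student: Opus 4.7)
The plan is to reduce the estimate, via the refined spectral expansion (Theorem \ref{thm-specexpand}), to uniform-in-$N$ bounds obtained in the spirit of Section \ref{sec-large-time}; the new input is a Gaussian pointwise bound on the difference. Set $\psi_{t,R}^{\tau,p} := h_t^{\tau,p} - \widetilde h_{t,R}^{\tau,p} = (1 - \varphi_R) h_t^{\tau,p}$. Since $\varphi_R \equiv 1$ on $B_{R/2}$, this function is supported in $\{g \in G(\R)^1 : r(g) \ge R/2\}$, and the difference of spectral sides equals $J_{\spec}(\psi_{t,R}^{\tau,p} \otimes \chi_{K(N)})$.

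First I would record a pointwise bound of the form $|h_t^{\tau,p}(g)| \le C t^{-d/2} e^{c t - r(g)^2/(4t)}$ on $G(\R)^1$, together with analogous bounds for the convolutions $h_t^{\tau,p} * X$ for $X \in \mathcal{U}(\gf_\infty^1)$. These follow from $H_t^{\tau,p}$ being the convolution kernel of the heat semigroup on a globally symmetric space of non-positive curvature, for which off-diagonal Gaussian bounds are classical (alternatively via Harish-Chandra's Plancherel formula). Restricted to $\{r(g) \ge R/2\}$ the exponent splits as $-r(g)^2/(4t) \le -R^2/(16t) - r(g)^2/(8t)$, so Lemma \ref{volume} yields the Schwartz-seminorm bound
\[
\| \psi_{t,R}^{\tau,p} * X \|_{L^1(G(\R)^1)} \le C_X \, e^{c_X t - R^2/(16t)}
\]
for every $X \in \mathcal{U}(\gf_\infty^1)$, with constants independent of $N$ and $R$.

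Next I would apply Theorem \ref{thm-specexpand} term by term, following Section \ref{sec-large-time}. Each contribution $J_{\spec, M}(\psi_{t,R}^{\tau,p} \otimes \chi_{K(N)})$ is a sum over $\pi \in \Pi_{\di}(M(\A))$ and $\bss$ of an integral over $i(\aaa_{L_s}^G)^*$ of the product of an intertwining-operator norm, controlled uniformly in $N$ via properties (TWN) and (BD) as in \eqref{estim3}, with the trace $\Tr \rho_\pi(P,\lambda,\psi_{t,R}^{\tau,p} \otimes \chi_{K(N)})$. In place of the exact identity \eqref{repr1}, which exploited that the heat kernel acts by a scalar on each induced representation, I would bound this trace by $\dim \cA_\pi^2(P)^{K(N), \nu} \cdot \| \psi_{t,R}^{\tau,p} * X_\lambda \|_{L^1(G(\R)^1)}$ for a suitable element $X_\lambda \in \mathcal{U}(\gf_\infty^1)$ built from powers of the Casimir, producing enough decay in $\lambda$ and $\lambda_{\pi_\infty}$ to make all sums and integrals absolutely convergent. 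Inserting the seminorm estimate above and summing over $\pi$ and $\nu$ via the polynomial boundedness \eqref{polynbd} of $\{\mu^M_{K(N)}\}$ gives, after dividing by $\vol(Y(N))$, a bound of the claimed shape $C_1 e^{C_3 t - R^2/(16t)}$.

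The main technical obstacle is precisely that $\psi_{t,R}^{\tau,p}$ no longer acts as a scalar on the induced representations, so the exact formula \eqref{repr1} is unavailable and one must work with operator bounds rather than explicit eigenvalues. The Schwartz-seminorm route above circumvents this by converting spectral-parameter decay into repeated differentiation of the heat kernel, at the cost of enlarging the constant $c$ in the Gaussian pointwise bound; because this degradation enters the exponent only additively, it is absorbed into $C_3 t$ and is harmless once combined with the dominant factor $e^{-R^2/(16t)}$, yielding the final constants $C_1, C_2, C_3$ of the proposition after renaming.
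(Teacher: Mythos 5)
Your proposal matches the paper's proof in all essentials: both write the difference as $J_{\spec}(\psi_R h_t^{\tau,p}\otimes\chi_{K(N)})$ with $\psi_R=1-\varphi_R$ supported off $B_{R/2}$, bound the Sobolev-type $L^1$ seminorms $\|(\Id+\Delta_G)^k(\psi_R h_t^{\tau,p})\|_{L^1(G(\R)^1)}$ by $C_1e^{-C_2R^2/t+C_3t}$ via Gaussian off-diagonal derivative estimates for the heat kernel combined with Lemma \ref{volume}, and then feed this into the refined spectral expansion with $N$-uniform control of intertwining operators and multiplicities. The only cosmetic difference is that for proper Levi subgroups the paper invokes the packaged estimate of \cite{FLM1} (Lemma 7.2 and Corollary 7.4 there), which bounds $J_{\spec,M}(h)$ directly by this seminorm times $N^{(\dim M-\dim G)/2+\varepsilon}$, rather than re-running the operator-norm analysis term by term as you sketch; the hand-done version you describe is carried out in the paper only for the term $M=G$.
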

Proposition \ref{prop-cptsupp} allows us to replace $h^{\tau,p}_t$ by a compactly
supported function. 
\begin{proof}
Let
$\psi_R:=1-\varphi_R$.  Then 
\[
J_{\spec}(h^{\tau,p}_t\otimes \chi_{K(N)})- 
J_{\spec}(\widetilde h^{\tau,p}_{t,R}\otimes\chi_{K(N)})=
J_{\spec}(\psi_R h^{\tau,p}_t\otimes\chi_{K(N)}).
\]
Now we use the refined spectral expansion \eqref{specside2}. Let $M\in\cL$ and
let $J_{\spec,M}$ be the distribution on the right hand side of \eqref{specside2},
which corresponds to $M$. Let 
\[
\Delta_G=-\Omega+2\Omega_{K_\infty},
\]
where $\Omega$ (resp. $\Omega_{K_\infty}$) denotes the Casimir operator of 
$G(\R)^1$(resp. $K_\infty$). Observe that $\psi_R h_t^{\tau,p}\otimes\chi_{K(N)}$ 
belongs to 
$\Co(G(\A)^1)$ and the proof of Lemma 7.2 and Corollary 7.4 in \cite{FLM1} 
extends to $h\in\Co(G(\A)^1)$. Thus there exists $k\ge 1$ such that for any 
$\varepsilon>0$ we have
\begin{equation}\label{estim-JM}
\begin{split}
\frac{1}{\vol(Y(N))}J_{\spec,M}(\psi_R h_t^{\tau,p}\otimes\chi_{K(N)})&=
\frac{1}{\vol(G(\Q)\bs G(\A)^1)}
J_{\spec,M}(\psi_R h^{\tau,p}_t\otimes \1_{K(N)})\\
&\ll_{\cT,\varepsilon}
\|(\Id+\Delta_G)^k(\psi_R h^{\tau,p}_t)\|_{L^1(G(\R)^1)} 
N^{(\dim M-\dim G)/2+\varepsilon}
\end{split}
\end{equation}
for all $N\in\N$, $p=0,\dots,d$, $t>0$, and $R>0$.

Let $\gf$ be the Lie algebra of $G(\R)^1$ and let $Y_1,\dots,Y_r$ be an 
orthonormal basis of $\gf$. Then $\Delta_G=-\sum_i Y_i^2$. Denote by $\nabla$
the canonical connection on $G(\R)^1$. Then it follows that there exists 
$C>0$ such that
\[
|(\Id+\Delta_G)^kh(g)|\le C\sum_{l=0}^{2k}\|\nabla^lh(g)\|, \quad g\in G(\R)^1,
\]
for all $h\in C^\infty(G(\R)^1)$. Let $m=\dim G(\R)^1$. 
By \cite[Proposition 2.1]{Mu1} it follows that for every $T>0$ and
$j\in\N$  there exist $C_2,C_3>0$ such that
\begin{equation*}
\|\nabla^j h_t^{\tau,p}(g)\|\le C_2 t^{-(m+j)/2} e^{-C_3r^2(g)/t},\quad g\in G(\R)^1,
\end{equation*}
for all $0<t\le T$. Using the semigroup property and arguing as in the proof of 
Corollary 1.6 in \cite{Do2}, it follows that there exist $A_1,A_2,A_3>0$
such that
\begin{equation}\label{connect}
\|\nabla^j h_t^{\tau,p}(g)\|\le A_1 t^{-(m+j)/2} e^{-A_2r^2(g)/t+A_3t},\quad g\in G(\R)^1,
\end{equation}
for all $t>0$. Now observe that for every $j\in\N$ there exists $C_j>0$ such
that
\[
\|\nabla^j\psi_R\|\le C_j
\]
for all $R\ge 1$. Since $\psi_R$ vanishes on $B_R$, it follows from 
\eqref{connect}  that there exist $C_4,C_5,C_6>0$ such 
that 
\[
\sum_{l=0}^{2k}\|\nabla^
l(\psi_R h_t^{\tau,p})(g)\|\le 
C_4 e^{-C_5R^2/t+A_3t}e^{-C_6r^2(g)/t}
\]
for all $g\in G(\R)^1$, $t>0$, and $R\ge 1$. Using Lemma \ref{volume},
it follows that there exist $C_1,C_2,C_3>0$ such that
\begin{equation}\label{estim6}
\|(\Id+\Delta_G)^k(\psi_R h_t^{\tau,p})\|_{L^1(G(\R)^1)}\le C_1 e^{-C_2R^2/t+C_3t}
\end{equation}
for all $t>0$ and $R\ge 1$.  Combined with 
\eqref{estim-JM} it follows that for every $\varepsilon>0$ we have
\[
\frac{1}{\vol(Y(N))}J_{\spec,M}(\psi_R h^{\tau,p}_t\otimes \chi_{K(N)})\ll_\varepsilon 
e^{-cR^2/t}N^{(\dim M -\dim G)/2+\varepsilon}
\]
for all $N\in\N$, $p=0,\dots,d$, and $t>0$. and $R\ge 1$. Especially, there 
exist $C_1,C_2,C_3>0$ such that
\begin{equation}\label{estim-JM1}
\frac{1}{\vol(Y(N))}|J_{\spec,M}(\psi_R h^{\tau,p}_t\otimes\chi_{K(N)})|\le 
C_1 e^{-C_2R^2/t+C_3t}
\end{equation}
for all $N\in\N$, $p=0,\dots,d$, and $t>0$, and $R\ge 1$.

It remains to consider the case $M=G$. Then we have 
\[
\begin{split}
J_{\spec,G}(\psi_R h_t^{\tau,p}\otimes\chi_{K(N)})&=\sum_{\pi\in\Pi_{\di}(G(\A)^1)} m_\pi
\Tr\pi(\psi_R h_t^{\tau,p}\otimes\chi_{K(N)})\\
&=\sum_{\pi\in\Pi_{\di}(G(\A)^1)} m_\pi\dim(\pi_f^{K(N)})
\Tr\pi_\infty(\psi_R h_t^{\tau,p}).
\end{split}
\]
For $\nu\in\Pi(K_\infty)$ denote by $\H_{\pi_\infty}(\nu)$ the $\nu$-isotypic
subspace. Let
\[
\H_{\pi_\infty}^\cT=\sum_{\nu\in\cT}\H_{\pi_\infty}(\nu).
\]
Then for every $k\in\N$ we have
\[
|\Tr\pi_\infty(\psi_R h^{\tau,p}_t)|\le 
\|(\Id+\pi_\infty(\Delta_G))^{-k}\|_{1,\H_{\pi_\infty}^\cT} 
\|(\Id+\Delta_G)^k(\psi_R h^{\tau,p}_t)\|_{L^1(G(\A)^1)}.
\]
Now observe that $\pi_\infty(\Delta_G)$ acts on $\H_{\pi_\infty}(\nu)$ by the 
scalar $-\lambda_{\pi_\infty}+2\lambda_\nu$, where $\lambda_{\pi_\infty}$ and 
$\lambda_\nu$ are the Casimir eigenvalues of $\pi_\infty$ and $\pi_\nu$, 
respectively. Furthermore, by \cite[Lemma 6.1]{Mu2} we have
\begin{equation}\label{casimir1}
-\lambda_{\pi_\infty}+\lambda_\nu\ge 0
\end{equation}
for $\H_{\pi_f}^{K(N)}\neq 0$ and $\H_{\pi_\infty}(\nu)\neq 0$. 
Moreover $\lambda_\nu\ge 0$. Thus $1-\lambda_{\pi_\infty}+2\lambda_\nu>0$ and we get
\[
\|(\Id+\pi_\infty(\Delta_G))^{-k}\|_{1,\H_{\pi_\infty}^\cT}\le 
\sum_{\nu\in\cT}\dim(\nu)(1-\lambda_{\pi_\infty}+2\lambda_\nu)^{-k}.
\]
Using \eqref{casimir1} we get 
\[
(1-\lambda_{\pi_\infty}+2\lambda_\nu)^2\ge 
\frac{1}{4}(1+\lambda_{\pi_\infty}^2+\lambda_\nu^2)\ge\frac{1}{4} 
(1+|\lambda_{\pi_\infty}|)^2.
\]
Thus we get
\[
\|(\Id+\pi_\infty(\Delta_G))^{-k}\|_{1,\H_{\pi_\infty}^\cT}\le 
\frac{1}{4}\dim(\H_{\pi_\infty}^\cT)(1+|\lambda_{\pi_\infty}|)^{-k}.
\]
Together with \eqref{estim6} it follows that for every $k\in\N$ there exists
$C_k>0$ such that
\[
|\Tr\pi_\infty(\psi_R h^{\tau,p}_t)|\le 
C_k e^{-C_2R^2/t+C_3t}(1+|\lambda_{\pi_\infty}|)^{-k}
\]
for all $t>0$ and $R\ge 1$.
This gives
\[
\begin{split}
\frac{1}{\vol(Y(N))}&|J_{\spec,G}(\psi_R h_t^{\tau.p}\otimes\chi_{K(N)})|\\
&\le
C_k e^{-C_2R^2/t+C_3t}\vol(K(N))\sum_{\pi\in\Pi{\di}(G(\A)^1)} m_\pi\dim(\pi_f^{K(N)})
(1+|\lambda_{\pi_\infty}|)^{-k}
\end{split}
\]
for all $t>0$ and $R\ge 1$.
As above it follows from \cite[Proposition 6.1, (4)]{FLM2} that there exists
$k\in\N$, which depends only on $\cT$, such that $\vol(K(N))$
times the sum is bounded independently of $N\in\N$. Hence there exist 
$C_1,C_2,C_3>0$ such that
\[
\frac{1}{\vol(Y(N))}|J_{\spec,G}(\psi_R h^{\tau,p}_{t}\otimes\chi_{K(N)})|\le 
C_1 e^{-C_2R^2/t+C_3t}
\]
for all $t>0$, $p=0,\dots,d$, $N\in\N$, and $R\ge 1$. This completes the proof 
of the proposition.
\end{proof}

\section{The geometric side of the trace formula}
\label{sec-geomside}
\setcounter{equation}{0}

In this section we assume that $G=\GL(n)$. 
To study the behavior of the regularized trace for small time, we use the
geometric side $J_{\geo}$ of the Arthur trace formula. Consider the equivalence 
relation on
$G(\Q)$ defined by $\gamma\sim\gamma^\prime$ whenever the semisimple parts of
$\gamma$ and $\gamma^\prime$ are $G(\Q)$-conjugate, and denote by $\cO_G$ the set
of all resulting equivalence classes. They are indexed by the conjugacy classes
of semisimple elements of $G(\Q)$. Then the coarse geometric expansion of
$J_{\geo}$ is 
\begin{equation}\label{geo-side1}
J_{\geo}(f)=\sum_{\of\in\cO_G} J_{\of}(f),\quad f\in C^\infty_c(G(\A)^1),
\end{equation}
where the distributions are the value at $T=0$ of a polynomial $J^T_{\of}(f)$
defined in \cite{Ar1}. When $\of$ consists of the unipotent elements of
$G(\Q)$, we write $J_{\unip}(f)$ for $J_{\of}(f)$. 

Fix $R\ge 1$ and recall the definition of $\varphi:=\varphi_R$ from~\eqref{phiR}. Put
\begin{equation}\label{cptsupp1}
\widetilde h_t^{\tau,p}:=\varphi h_t^{\tau,p}.
\end{equation}
\begin{lem}\label{lem-geounip}
There exists $N_0\in\N$ such that
\[
J_{\geo}(\widetilde h_t^{\tau,p}\otimes \chi_{K(N)})=J_{\unip}(\widetilde h_t^{\tau,p}\otimes \chi_{K(N)})
\]
for all $N\ge N_0$. 
\end{lem}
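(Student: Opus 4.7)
The plan is to use the coarse geometric expansion \eqref{geo-side1} and show that for $N$ sufficiently large every class $\of \in \cO_G$ other than the unipotent one contributes zero. Arthur's kernel defining $J_\of$ is assembled from terms of the form $f(x^{-1}\delta^{-1}\gamma n\delta x)$ with $\gamma \in M_P(\Q)\cap \of$, $n \in N_P(\A)$, integrated over $n$ and summed over $\delta$ and $P$. Thus it suffices to prove the following rigidity: for $N \ge N_0$, every $\gamma \in G(\Q)$ for which $x^{-1}\gamma n x \in \supp(\widetilde h_t^{\tau,p} \otimes \chi_{K(N)})$ for some $x \in G(\A)^1$, some standard parabolic $P = M_PN_P$ and some $n \in N_P(\A)$ must be unipotent. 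Since for $\gamma \in M_P$ and $n \in N_P$ the matrix $\gamma n$ is block upper triangular with diagonal blocks equal to those of $\gamma$, one has $\det(T-\gamma n) = \det(T-\gamma)$ at every place, which reduces the problem to controlling the characteristic polynomial of $\gamma$ itself.

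The rigidity is a standard diophantine argument. Let $p(T) = \det(T-\gamma) \in \Q[T]$. Since the characteristic polynomial is conjugation-invariant, $p(T) = \det(T-x_v^{-1}\gamma n_v x_v)$ at every place $v$. At a finite $v$ one has $x_v^{-1}\gamma n_v x_v \in K(N)_v \subset \GL(n,\Z_v)$, and at the primes $v \mid N$ this element is in addition congruent to the identity modulo $N\Z_v$. Consequently $p(T) \in \Z[T]$ and each coefficient of $p(T)$ is congruent modulo $N$ to the corresponding coefficient of $(T-1)^n$. At the archimedean place, $x_\infty^{-1}\gamma n_\infty x_\infty \in B_R$; the $KAK$-decomposition (as in the proof of Lemma~\ref{volume}) forces the singular values of this matrix into $(e^{-R},e^R)$, so its operator norm, and hence the modulus of every eigenvalue of $\gamma$, is bounded by $e^R$. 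Since the elementary symmetric functions are polynomials in the eigenvalues, the coefficients of $p(T)$ satisfy $\abs{c_k} \le \binom{n}{k} e^{nR}$.

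Choosing $N_0$ to be any integer strictly larger than $2^n e^{nR} + 2^n$, for $N \ge N_0$ the two conditions together force $p(T) = (T-1)^n$. Hence $\gamma$ is unipotent, its semisimple part equals $1$, and the equivalence class of $\gamma$ in $\cO_G$ is the unipotent class. Every non-unipotent summand in \eqref{geo-side1} therefore vanishes identically when evaluated on $\widetilde h_t^{\tau,p} \otimes \chi_{K(N)}$, which is the claim of the lemma. The one delicate point is the preliminary reduction: one must notice that inside Arthur's kernel the constrained quantity is $\gamma n$ with $n \in N_P(\A)$ varying freely, not $\gamma$ alone, and it is precisely the identity $\det(T-\gamma n) = \det(T-\gamma)$ for $\gamma \in M_P$, $n \in N_P$ that saves the classical characteristic-polynomial rigidity in the weighted parabolic context.
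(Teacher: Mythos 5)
Your argument is correct and rests on the same core observation as the paper's proof --- that a non-unipotent class $\of$ forces the characteristic polynomial of any kernel element $x^{-1}\gamma n x$ (which equals $\det(T-\gamma)$ by your block-triangularity remark) to differ from $(T-1)^n$, which is incompatible with being simultaneously deep in $K(N)$ at the finite places and in $B_R$ at $\infty$. The routes differ in one structural point. The paper first invokes a softer, qualitative fact: since $\supp(\widetilde h_t^{\tau,p}\otimes\chi_{K(N)})\subset B_R\K$ independently of $N$, only finitely many classes $\of$ can ever contribute; it then shows by an $N\to\infty$ limiting argument (divisibility by ever larger prime powers, or by infinitely many primes) that any non-unipotent class drops out for $N$ large, and takes the max of the resulting finitely many thresholds. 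You instead bypass the finiteness of contributing classes entirely and treat all non-unipotent $\gamma$ at once by combining the two constraints quantitatively: $p(T)\in\Z[T]$ with $p(T)\equiv(T-1)^n\pmod N$, together with $\abs{c_k}\le\binom{n}{k}e^{nR}$, pin down $p(T)=(T-1)^n$ once $N>2^ne^{nR}+2^n$. This is somewhat more elementary (it doesn't require the general finiteness theorem for compactly supported test functions) and has the added benefit of an explicit $N_0(R,n)$, at the price of a cruder archimedean bound. You also make explicit the reduction $\det(T-\gamma n)=\det(T-\gamma)$ for $\gamma\in M_P$, $n\in N_P$, which the paper uses implicitly when it speaks of the conjugacy classes of $\of$ meeting $G(\R)K(N)$; spelling this out is a genuine improvement in precision, since the kernel actually involves $\gamma n$ with $n\in N_P(\A)$ varying freely, not $\gamma$ alone.
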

\begin{proof}
By definition, the support of $\widetilde h^{\tau,p}_t$ is contained in $B_R$. 
Then the support of $\widetilde h_t^{\tau,p}\otimes \chi_{K(N)}$ is contained in
$B_RK(N)\subset B_R\K$, and therefore there are only finitely many classes
$\of\in\cO_G$ that contribute to the geometric side of the trace formula 
\eqref{geo-side1} for the the functions $\widetilde h_t^{\tau,p}\otimes 
\chi_{K(N)}$. Moreover, the only class $\of\in\cO_G$ for which the union of the
$G(\A)$-conjugacy classes of elements of $\of$ meets $G(\R)K(N)$ for 
infinitely many $N\in\N$ is the unipotent class. For assume that $\of$ has
this property. Let $\gamma\in\of$ and let $q\in\Q[X]$ be the characteristic
polynomial of the linear map $\gamma-\Id\in\End(\C^n)$. The assumption on 
$\of$ implies that every coefficient of $q$, except the leading coefficient
$1$, is either arbitrarily close to $0$ at some prime $p$ or has absolute
value $<1$ at infinitely many places. Therefore, necessarily, $q=X^n$, and
$\gamma$ is unipotent. Therefore, the geometric side reduces to 
$J_{\unip}(\widetilde h_t^{\tau,p}\otimes \chi_{K(N)})$ for all but finitely many
$N\in\N$.
\end{proof}

To analyze $J_{\unip}(f)$ we use Arthur's fine geometric expansion 
\cite[Corollaries 8.3]{Ar4} to
express $J_{\unip}(f)$ in terms of weighted orbital integrals. To state the 
result we recall some facts about weighted orbital integrals. Let $S$ be a 
finite set of places of $\Q$ containing $\infty$. Set
\[
\Q_S=\prod_{v\in S}\Q_v,\quad \mathrm{and}\quad G(\Q_S)=\prod_{v\in S}G(\Q_v).
\]
Let $M\in\levis$ and $\gamma\in M(\Q_S)$. 
The general weighted orbital integrals $J_M(\gamma,f)$ defined in 
\cite{Ar5} are distributions on $G(\Q_S)$. If $\gamma$ is such that
$M_\gamma=G_\gamma$, then, as the name suggests,  $J_M(\gamma,f)$ is given by an
integral of the form
\[
J_M(\gamma,f)=\big|D(\gamma)\big|^{1/2}\int_{{G_\gamma}(\Q_S)\bs G(\Q_S)}
f(x^{-1}\gamma x) v_M(x)\ dx,
\]
where $D(\gamma)$ is the discriminant of $\gamma$ \cite[p. 231]{Ar5}
and $v_M(x)$ is the weight function associated to the $(G,M)$-family
$\{v_P(\lambda,x)\colon P\in\mathcal{P}(M)\}$ defined in 
\cite[p.230]{Ar5}. For general $\gamma$ the definition is more 
complicated. In this case, $J_M(\gamma,f)$ is obtained as a limit of a linear
combination of integrals as above. For more details we refer to 
\cite{Ar8}.
Let
 \[
G(\Q_S)^1=G(\Q_S)\cap G(\A)^1
\]
and write $C^\infty_c(G(\Q_S)^1)$ for the space of functions on $G(\Q_S)^1$
obtained by restriction of functions in $C^\infty_c(G(\Q_S))$. If $\gamma$ 
belongs to the intersection of $M(\Q_S)$ with $G(\Q_S)^1$, one can obviously
define the corresponding weighted orbital integral as linear form on 
$C^\infty_c(G(\Q_S)^1)$.

Since for $\GL(n)$ all conjugacy classes are stable (in the sense that for any 
finite set $S$, two unipotent elements in $G(\Q)$ are conjugate in $G(\Q_S)$ 
if and only if they are conjugate in $G(\Q)$), 
the expression of $J_{\unip}(f)$ in terms of weighted 
orbital integrals  simplifies. For 
$M\in\levis$ let $\left(\cU_M(\Q)\right)$ be the (finite) set of unipotent 
conjugacy classes of $M(\Q)$. Let $F\in C^\infty_c(G(\Q_S)^1)$ and denote by
$\1_{K^S}$ the characteristic function of the standard maximal compact subgroup
of $G(\A^S)$. Then by 
\cite[Corollary 8.3]{Ar4} there exist constants $a(S,\OOO)$ which
depend on the normalization of measures such that
\begin{equation}\label{fine-exp}
J_{\unip}(F\otimes\1_{K^S})=\vol(G(\Q)\bs G(\A)^1)F(1)
+\sum_{(M,\OOO)\neq(G,\{1\})}a^M(S,\OOO)J_M(\OOO,F),
\end{equation}
where $M$ runs over $\cL$ and $\OOO$ over $\left(\cU_M(\Q)\right)$.
To deal with the $S$-adic integral, we note that  $J_M(\OOO,F)$ can be 
decomposed  into a sum  of
products of integrals at $\infty$ and at the finite places $S_{f}=S\setminus
\{\infty\}$. Suppose that $F=F_\infty\otimes F_f=F_\infty\otimes\bigotimes_{p\in S_f} F_p$ with $F_v\in C^\infty(G(\Q_v))$.  Let $L\in\cL(M)$ and $Q=LV\in\cP(L)$. Define 
\begin{equation}\label{ct}
F_{\infty,Q}(m)=\delta_Q(m)^{1/2}\int_{K_\infty}\int_{V(\R)}F_\infty(k^{-1}mvk)dkdv,\quad
m\in M(\R),
\end{equation}
and define $F_{f,Q}$ in a similar way. Then for every pair of Levi subgroups
$L_1,L_2\in\cL(M)$ there exist constants $d^G_M(L_1,L_2)\in\C$ such that
\begin{equation}\label{decomp1}
J_M(\cO,F)=\sum_{L_1,L_2\in\cL(M)}d^G_M(L_1,L_2) J^{L_1}_M(\OOO_\infty,F_{\infty,Q_1})
J^{L_2}_M(\OOO_{f},F_{f,Q_2})
\end{equation}
(see \cite{Ar3},\cite[(18.7)]{Ar10}) where $Q_i\in\cP(L_i)$, and
$\OOO_{f}=(\OOO_v)_{v\in S_{f}}$, where for each $v\in S$, $\OOO_v\subseteq M(\Q_v)$ denotes the  $M(\Q_v)$-conjugacy class of $\OOO$.
The coefficients $d_M^G(L_1, L_2)$ are independent of $S$ and they vanish 
unless the natural map $\ka_M^{L_1}\oplus\ka_M^{L_2}\longrightarrow \ka_M^G$ is 
an isomorphism. In case the coefficient does not vanish, it depends on the 
chosen measures on $\ka_M^{L_1}$, $\ka_M^{L_2}$ and $\ka_M^G$. 

\begin{lem}\label{lem:vanishing:coeff}
 If $d_M^G(\underline{L})\neq0$, then at most $\dim\aaa_M^G$-many elements of $\underline{L}$ are not equal to $M$.
\end{lem}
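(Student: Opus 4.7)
The plan is to read this off the vanishing criterion stated immediately before the lemma, namely that $d_M^G(\underline{L})\neq0$ forces the natural map
\[
\bigoplus_{i}\aaa_M^{L_i}\longrightarrow\aaa_M^G
\]
to be an isomorphism (here $\underline{L}=(L_i)$ is the tuple of Levi subgroups indexed by the factors of the product decomposition of the test function). Taking dimensions, this is the identity
\[
\sum_{i}\dim\aaa_M^{L_i}=\dim\aaa_M^G.
\]

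The key observation is then the following trivial but crucial fact about the summands. Each $L_i$ lies in $\cL(M)$, so contains $M$, which gives $\aaa_M^{L_i}=\{0\}$ precisely when $L_i=M$, and $\dim\aaa_M^{L_i}\ge 1$ otherwise. Therefore I would estimate
\[
\#\{\,i : L_i\neq M\,\}\le\sum_{i:\,L_i\neq M}\dim\aaa_M^{L_i}=\sum_{i}\dim\aaa_M^{L_i}=\dim\aaa_M^G,
\]
which is exactly the claim.

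So there is no real obstacle: the lemma is a direct counting consequence of the defining isomorphism property of the constants $d_M^G(\underline{L})$ recalled just above its statement, together with the fact that each nontrivial $L_i\in\cL(M)$ strictly enlarges $M$ and hence contributes at least one dimension to $\aaa_M^{L_i}$. The only thing worth pinning down carefully (which I would do in a sentence) is that one is indeed in the setting where the decomposition formula is applied factor-by-factor over the places of $S$, so that $\underline{L}$ is a tuple indexed by the places of $S$ and the vanishing criterion takes the form stated above; this is the form in which the decomposition formula will be iterated in the subsequent sections to reduce the $p$-adic weighted orbital integrals to integrals at individual primes $p\mid N$.
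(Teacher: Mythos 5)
Your proof is correct and is exactly the argument the paper intends: the paper's own proof simply asserts that the claim is ``clear from the fact that the map in \eqref{eq:isom} is an isomorphism,'' and your dimension count (each $L_i\supsetneq M$ contributes $\dim\aaa_M^{L_i}\ge 1$ to the sum $\sum_i\dim\aaa_M^{L_i}=\dim\aaa_M^G$) is precisely the detail being left implicit. Nothing further is needed.
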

\begin{proof}
 The first assertion is clear from the fact that the map in \eqref{eq:isom} is an isomorphism if $d_M^G(\underline{L})\neq0$. 
\end{proof}

We shall apply \eqref{fine-exp} and \eqref{decomp1} with test functions 
$F$ satisfying $F_f=\1_{K(N)}$. In this case we can choose the set of 
places $S=S(N)$ quite explicitly and also have a good upper bound for the 
global coefficients $a^M(S(N),\OOO)$ that occur in \eqref{fine-exp}. Namely we
have 
\begin{lem}\label{lem-glob-coeff}
\begin{enumerate}
\item Let $S(N)=\{\infty\}\cup \{p: ~ p|N\}$. Then \eqref{fine-exp} with $S=S(N)$ holds for $F=F_\infty\otimes\One_{K(N)}$.  
\item There exist constants $a,b>0$ such that for all $N$, all $M$ and all 
unipotent orbits $\OOO$ in $M$ we have
  \[
   |a^M(S(N),\OOO)| \le a (1+\log N)^b 
  \]
with $S(N)$ as in the first part.
\end{enumerate}
\end{lem}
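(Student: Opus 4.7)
I would exploit the product structure of the principal congruence subgroup. Since $K(N) = \prod_p K(N)_p$ with $K(N)_p = K_p := \GL_n(\Z_p)$ for every $p \nmid N$, the set $S(N) = \{\infty\} \cup \{p : p \mid N\}$ contains exactly the places at which the local factor of $\mathbf{1}_{K(N)}$ differs from the characteristic function of the standard maximal compact. Writing $S_f(N) = S(N) \setminus \{\infty\}$, one obtains the factorization
\[
F_\infty \otimes \mathbf{1}_{K(N)} \;=\; \Bigl(F_\infty \otimes \bigotimes_{p \in S_f(N)} \mathbf{1}_{K(N)_p}\Bigr) \otimes \mathbf{1}_{K^{S(N)}},
\]
and the first factor lies in $C_c^\infty(G(\Q_{S(N)})^1)$. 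This is precisely the shape of test function to which Arthur's fine expansion \eqref{fine-exp} applies, so that formula with $S = S(N)$ directly yields the stated identity.

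\textbf{Plan for part (2).} For the uniform bound I would appeal to Matz's estimates on the unipotent coefficients in the fine geometric expansion for $\GL(n)$ obtained in \cite{Ma}. Arthur's recipe presents $a^M(S,\OOO)$, for $\OOO$ a unipotent conjugacy class in $M(\Q)$, as an explicit combination of residues of partial Dedekind-type zeta functions attached to the Jordan type of $\OOO$, certain volume factors, and products over $p \in S_f$ of terms of the form $(1-p^{-k})^{\pm 1}$. In \cite{Ma} this expression is analyzed uniformly in $S$ and in $\OOO$, and it is shown that
\[
|a^M(S,\OOO)| \;\ll\; \Bigl(1 + \log \prod_{p \in S_f} p\Bigr)^{b},
\]
with $b$ and the implied constant depending only on $n$. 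Specializing $S = S(N)$ and using $\log \prod_{p \mid N} p \le \log N$ gives the claimed bound.

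\textbf{Main obstacle.} Part (1) is essentially bookkeeping about where the local factors of $K(N)$ are hyperspecial. The substance of the lemma lies entirely in part (2); that polylogarithmic bound rests on delicate analytic input in \cite{Ma}, including growth estimates for special values of Dedekind-type zeta functions, careful tracking of the dependence of Arthur's coefficients on the auxiliary set $S$, and the combinatorics of unipotent orbits in $\GL(n)$ indexed by partitions of $n$. It is precisely this uniformity in $N$ that is not known beyond $\GL(n)$, and that currently forces the restriction to $\GL(n)$ (and thence $\SL(n)$) in the main theorem of the paper.
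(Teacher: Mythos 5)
Your argument matches the paper's proof, which is purely a matter of citation: part (1) is exactly Arthur's \cite[Corollary 8.3]{Ar4} applied to the factorization $F_\infty\otimes\One_{K(N)}=(F_\infty\otimes\bigotimes_{p\mid N}\One_{K(N)_p})\otimes\One_{K^{S(N)}}$ that you spell out, and part (2) is the uniform polylogarithmic bound on the global coefficients established by Matz, whose content you describe accurately. The one small correction is that the coefficient bound is proved in \cite{coeff} (the paper specifically on bounds for global coefficients in the fine geometric expansion for $\GL(n)$), with \cite[\S 6]{Ma} cited only as a supplementary reference.
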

\begin{proof}
The first statement is contained in \cite[Corollary 8.3]{Ar4}. The second statement follows from \cite{coeff}, see also \cite[\S 6]{Ma}. 
\end{proof}

In the following we write
\[
 N=\prod_p p^{e_p}
\]
for the prime factorization of $N$. Then $\One_{K(N)}=\bigotimes_{p} \One_{K(p^{e_p})}$ with $K(p^{e_p})$ the principal congruence subgroup of level $p^{e_p}$ in $\cpt_p=\GL_n(\Z_p)$.

We can assume that $L_2=G$ since $L_2$ is canonically isomorphic to a direct product of smaller $\GL(m)$s.
We then
split the finite orbital integral $J^G_M(\OOO_{f},\One_{K(N)})$ further, until we arrive at
\begin{equation}\label{decomp4}
J^G_M(\OOO_{f},\One_{K(N)})
=\sum_{\underline{L}\in\cL(M)^{|S(N)_f|}}d_M^G(\underline{L})
\prod_{p\in S(N)_f}J_M^{L_p}(\OOO_p,\One_{\One_{K(p^{e_p})},Q_p}),
\end{equation}
where $\underline{L}$ runs over all tuples $(L_p)_{p\in S(N)_f}$
of Levi subgroups $L_p\in\cL(M)$,  and $d_M^G(\underline{L})$ are
certain constants satisfying $d_M^G(\underline{L})=0$ unless the natural map
\begin{equation}\label{eq:isom}
\bigoplus_{p\in S(N)_f}\af_0^{L_p}\to \af_0^G
\end{equation}
is an isomorphism. Moreover, the parabolic subgroups $Q_p\in\cP(L_p)$ are unique and
chosen as explained in \cite[\S 17-18]{Ar10}.

It follows from \cite{Ar5} (see also \cite{LM}) that each 
local integral can be written as (using that $K(p^{e_p})$ is normal in $K_p$)
\begin{equation}\label{local-int}
J_M^{L_p}(\OOO_p,\One_{K(p^{e_p}),Q_p})=\int_{N_p(\Q_p)} \One_{K(p^{e_p}),Q_p}(n)w^{L_p}_{M,\OOO_p}(n)\,dn,
\end{equation}
where $P_p=M_pN_p\subset L_v$ is a standard parabolic subgroup with 
$M_p\subset M$ such that $\OOO_p$ is induced from the trivial orbit in 
$M_p$ to $M$, i.e., $P_p$ is a Richardson parabolic for $\OOO_p$ in $M$. The 
function $w^{L_p}_{M,\OOO_p}$ is a certain weight function on $N_p(\Q_p)$ of the form
\begin{equation}\label{weight-fct}
w^{L_p}_{M,\OOO_p}=Q(\log\|q_1(X)\|_p,\dots,\log\|q_r(X)\|_p),
\end{equation}
where $n=\Id+X$ with $X$ a nilpotent upper triangular matrix, $q_1,\dots,q_r$
are polynomials in $X$ with image in some affine space, and $Q$ is a polynomial.
Note that $Q,q_1,\dots,q_r$ only depend on $\OOO$, $M$, and $L_p$ (as a Levi
subgroup of $G$ defined over $\Q$), but not on the place~$p$.

\section{Bounds for $p$-adic orbital integrals}\label{sec-padic}
\setcounter{equation}{0}

In this section we still assume that $G=\GL(n)$. We deal with the orbital integrals 
of the form $J_M^L(\OOO,\1_{K(N),Q})$, $Q\in\cP(L)$, which arise in
\eqref{decomp1} for our type of test functions. 

We first make the following observation:
Let $Q=LV$ be a semistandard parabolic subgroup.
Since $K(N)\cap V(\A_f)= V(N\hat{\Z})$, we have
\begin{equation}\label{eq:int} 
 \int_{V(\A_f)} \One_{K(N)}(v)\, dv
 = N^{-\dim V}.
\end{equation}
By the definition \eqref{ct} and the fact that $K(N)$ is a normal subgroup in 
$\cpt_f$ we have
\[
 \One_{K(N), Q}(m) = \delta_Q(m)^{1/2}\int_{V(\A_f)} \One_{K(N)} (mv)\, dv
\]
for any  $m\in L(\A_f)$. Hence $\One_{K(N),Q}(m)=0$ unless $m\in K^L(N)=K(N)\cap L(\A_f)$. Now if $m\in K^L(N)$, we have $mv\in K(N)$ if and only if $v\in K(N)$. Hence
\begin{equation}\label{eq:descent}
 \One_{K(N), Q}(m)= N^{-\dim V} \One_{K^L(N)}(m). 
\end{equation}
It therefore suffices to bound $J_M^{L}(\OOO,\One_{K^{L}(N)})$. 
Again, since $L$ is isomorphic to a direct product of finitely many smaller 
$\GL(m)$'s, it suffices to consider the case $Q_2=G=\GL(n)$.
Moreover, the formulas similar to \eqref{eq:int} and \eqref{eq:descent} hold for the local integrals at $p$ for the functions $\One_{K(p^{e_p})}$ with the necessary adjustments.

We now use \eqref{decomp4} to find an upper bound for the orbital integrals.

\begin{lem}\label{lem:unweighted}

If $L_p=M$, then 
\begin{equation}\label{eq:descent:integral}
 J_M^{L_p}(\OOO_p, \One_{K(p^{e_p}),Q_p})
= J_M^M(\OOO_p, \One_{K(p^{e_p}),Q_p}) 
=  p^{-\frac{e_p}{2}\dim \Ind_M^G\OOO} 
 \end{equation}
\end{lem}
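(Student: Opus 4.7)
The first equality is immediate from the definitions: when $L_p=M$, the $(L_p,M)$-family $\{v_P(\lambda,\cdot)\}_{P\in\cP(M)}$ used to define the weight $v_M^{L_p}$ collapses to the constant $1$, so $J_M^{L_p}$ reduces to the unweighted orbital integral $J_M^M$.

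For the second equality, I would first apply the $p$-adic analogue of~\eqref{eq:descent} at $L=L_p=M$, with $V_Q$ the unipotent radical of $Q_p\in\cP(M)$, to get $\One_{K(p^{e_p}),Q_p}(m) = p^{-e_p\dim V_Q}\,\One_{K^M(p^{e_p})}(m)$ on $M(\Q_p)$, and hence
\begin{equation*}
J_M^M(\OOO_p,\One_{K(p^{e_p}),Q_p}) = p^{-e_p\dim V_Q}\,J_M^M(\OOO_p,\One_{K^M(p^{e_p})}).
\end{equation*}
Combined with the dimension identity $\dim\Ind_M^G\OOO=\dim\OOO+2\dim V_Q$ (standard for Richardson-induced unipotent orbits in $\GL(n)$, checkable by Jordan-block bookkeeping), this reduces the claim to showing
\begin{equation*}
J_M^M(\OOO_p,\One_{K^M(p^{e_p})}) = p^{-\frac{e_p}{2}\dim\OOO}.
\end{equation*}

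The remaining identity is a scaling statement. Since $M\cong\prod_i\GL(m_i)$ and both $\OOO_p$ and $K^M(p^{e_p})$ factor accordingly, it suffices to treat a single $\GL(m)$-factor. Via $u=I+X$, the intersection $\OOO_p\cap K^M(p^{e_p})$ consists of the matrices $X\in p^{e_p}M_m(\Z_p)$ of the prescribed Jordan type, and the change of variable $X\mapsto p^{e_p}X$ maps it bijectively onto $\OOO_p\cap K^M$. The Kirillov--Kostant--Souriau symplectic form on $\OOO_p$ scales linearly under $X\mapsto\lambda X$, so its Liouville volume form scales by $\lambda^{\dim\OOO/2}$; accordingly the $p$-adic invariant measure on the orbit scales by $p^{-e_p\dim\OOO/2}$. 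With the invariant measures on unipotent orbits normalized so that $J_M^M(\OOO_p,\One_{K^M})=1$, this produces the required factor.

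The main obstacle is pinning down the precise normalization of invariant measures on unipotent orbits used elsewhere in the trace formula and verifying that, with this choice, the base integral $J_M^M(\OOO_p,\One_{K^M})$ equals $1$; once this is granted, the remaining content is the homogeneity of the KKS form, the descent~\eqref{eq:descent}, and the dimension identity. For $\GL(m)$, compatibility can be checked directly by an explicit Jordan-form parametrization of $\OOO_p$ and comparison of the resulting Jacobian with the KKS density.
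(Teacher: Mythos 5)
Your overall strategy is viable, but, as you yourself note, it leaves open exactly the point that carries the content of the lemma: the normalization of the invariant measure on the unipotent class, equivalently the value of the base integral $J_M^M(\OOO_p,\One_{K^M_p})$. In the normalization used throughout this paper (and in the fine geometric expansion as set up here), this is settled by \eqref{local-int}: for a unipotent class $\OOO$ the invariant orbital integral is, by the very choice of normalization, the integral of the test function over the unipotent radical $U^M(\Q_p)$ of a Richardson parabolic for $\OOO$ in $M$ (with trivial weight when $L_p=M$). Granting that, your base integral is $\vol(U^M(\Z_p))=1$, and your scaling step becomes the trivial computation $\int_{U^M(\Q_p)}\One_{K^M(p^{e_p})}(u)\,du=\vol\bigl(U^M(p^{e_p}\Z_p)\bigr)=p^{-e_p\dim U^M}$ with $\dim U^M=\tfrac12\dim\OOO$ (dimension as an $M$-orbit). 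The appeal to homogeneity of the Kirillov--Kostant--Souriau measure is then superfluous; it is correct in spirit (it is the standard homogeneity of nilpotent orbital integrals under scaling), but it would need a genuinely $p$-adic justification rather than the archimedean Liouville-form heuristic, and in any case it only transfers the problem back to the unresolved normalization.

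For comparison, the paper's proof runs the same inputs in a more economical order: writing $Q_p=MV$ and letting $P^M=L^MU^M$ be a Richardson parabolic for $\OOO$ in $M$, it observes that $L^MU^MV$ is a Richardson parabolic for $\Ind_M^G\OOO$ in $G$, so the two integrations --- over $U^M$ coming from the orbital integral and over $V$ coming from the constant-term construction of $\One_{K(p^{e_p}),Q_p}$ --- combine into a single integral of $\One_{K(p^{e_p})}$ over $U^G(\Q_p)=U^M(\Q_p)V(\Q_p)$, which equals $p^{-e_p\dim U^G}$ by the local analogue of \eqref{eq:int}; the exponent is identified via $\dim U^G=\tfrac12\dim\Ind_M^G\OOO$. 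Your version separates this into the descent step \eqref{eq:descent}, the scaling step, and the identity $\dim\Ind_M^G\OOO=\dim\OOO+2\dim V$, which is the same dimension formula in disguise. So: no step of yours is wrong, but the proof is incomplete until you anchor the normalization in the Richardson-parabolic formula \eqref{local-int}; once you do, the remaining steps collapse to the paper's one-line volume computation.
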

\begin{proof}
 Let $Q_p=M V$ be the Iwasawa decomposition of $Q_p$ and let $P^M= L^M U^M$ be a Richardson parabolic in $M$ for $\OOO_p$ with $T_0\subseteq L^M$, that is, $\OOO$ is induced from the trivial orbit in $L^M$ to $M$. Then $L^M U^M V=:L^M U^G$ is a Richardson parabolic for the induced orbit $\Ind_M^G\OOO_p$. Since $K(p^{e_p})$ is a normal subgroup in $K_p$, we can compute the invariant orbital integral $J_M^M(\OOO, \One_{K(p^{e_p}),Q_p})$ as (cf.\ also \eqref{local-int} and \cite{LM})
 \begin{multline*}
J_M^M(\OOO, \One_{K(p^{e_p}),Q_p})
=  \int_{U^M(\Q_p)} \One_{K(p^{e_p}),Q_p}(u)\, du
  = \int_{U^M(\Q_p)} \int_{V(\Q_p)} \One_{K(p^{e_p})}(uv)\,dv\, du\\
 = \int_{U^G(\Q_p)}  \One_{K(p^{e_p})}(u)\, du.
 \end{multline*}
 Since $\dim U^G=\dim\Ind_M^G\OOO /2$, the equation \eqref{eq:descent:integral} follows from \eqref{eq:int}.
\end{proof}

Recall from \eqref{local-int} and \eqref{weight-fct} that
\[
 J_M^{L_p}(\OOO, \One_{K(p^{e_p}),Q_p})
 =\int_{N_p(\Q_p)} \One_{K(p^{e_p}),Q_p}(n) w^{L_p}_{M,\OOO}(n)\, dn
\]
The polynomials $Q, q_1, \ldots, q_r$ defining $w^{L_p}_{M,\OOO}$ only depend on $\OOO$, $M$, and $L_p$ (as a Levi subgroup of $G$ defined over $\Q$), but not on the prime $p$. Hence there are overall only finitely many possibilities for those polynomials independent of the level $N$. Now if $n\in K(p^{e_p})\cap N_p(\Q_p)$ we can write $n=\Id + p^{e_p} Y$ with $Y\in \Mat_{n\times n}(\Z_p)$ a nilpotent matrix. Hence setting $n'=\Id +Y$ we get
\begin{multline*}
\left|  J_M^{L_p}(\OOO, \One_{K(p^{e_p}),Q_p})\right|
 \le p^{-e_p\dim V_p}\int_{N_p(\Q_p)} \One_{K^{L_p}(p^{e_p})}(n) |w^{L_p}_{M,\OOO}(n)|\, dn\\
 \le p^{-e_p\dim V_p} p^{-e_p\dim N_p} \int_{N_p(\Q_p)} \One_{K^{L_p}_p}(n') Q'(\log p^{e_p}, |\log \|q_1(Y)\|_p|, \ldots, |\log \|q_r(Y)\|_p|)\, dn' 
\end{multline*}
with $Q'$ a suitable polynomial only depending on $Q, q_1, \ldots, q_r$ and $n$ but not on $N$.

\begin{lem}
There exist absolute constants $r, p>0$ (independent of $p, N$) such that 
\[
 \int_{N_p(\Q_p)} \One_{K^{L_p}_p}(n') Q'(\log p^{e_p}, |\log \|q_1(Y)\|_p|, \ldots, |\log \|q_p(Y)\|_p|)\, dn' 
 \le C (1+\log p^{e_p})^r.
\]
\end{lem}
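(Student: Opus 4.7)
The plan is to reduce the integral to moments of $p$-adic valuations of fixed polynomials over $N_p(\Z_p)$. First, I would parametrize $n' = \Id + Y$ with $Y$ nilpotent in $N_p(\Z_p)$, so that the factor $\One_{K^{L_p}_p}(n')$ becomes the characteristic function of this compact open set. For $Y\in N_p(\Z_p)$ one has $|\log\|q_i(Y)\|_p| = v_p(q_i(Y))\log p$ and $\log p^{e_p} = e_p\log p$, after arranging (by clearing denominators in the $q_i$, whose effect is absorbed into $Q'$ and the final constant) that $q_i(Y)\in\Z_p$ for $Y\in\Z_p^{\dim N_p}$.

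Expanding the fixed polynomial $Q'$ of total degree $d := \deg Q'$ into monomials and substituting, the integral decomposes as a finite linear combination of terms of the form
\[
(e_p\log p)^{a_0}(\log p)^{a_1+\cdots+a_r}\int_{N_p(\Z_p)}\prod_{i=1}^r v_p(q_i(Y))^{a_i}\, dY,\qquad a_0+a_1+\cdots+a_r\le d.
\]
Since $\log p \le 1 + e_p\log p$ for $e_p\ge 1$, the scalar prefactors are uniformly bounded by $(1+\log p^{e_p})^d$, and it suffices to bound the inner moment integral by an absolute constant independent of $p$.

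The crucial ingredient is then the uniform moment bound: for fixed nonzero polynomials $q_1,\ldots,q_r$ and any multi-index $\underline{a}$, there is $C_{\underline{a}} > 0$ independent of $p$ with
\[
\int_{N_p(\Z_p)}\prod_{i=1}^r v_p(q_i(Y))^{a_i}\, dY \le C_{\underline{a}}.
\]
This follows by layer-cake decomposition from the standard estimate $\vol\{Y\in N_p(\Z_p): v_p(q(Y))\ge m\}\ll p^{-\alpha m}$, valid with some $\alpha>0$ and implied constant uniform in $p$ for all but finitely many primes — a consequence of the rationality and uniformity of Igusa local zeta functions (Denef), or of an elementary Hensel-lifting argument exploiting that $q\not\equiv 0$. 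The $k$-th moment is thereby dominated by the uniformly convergent geometric series $\sum_m m^k p^{-\alpha m}$. For the finitely many exceptional small primes, individual finite bounds suffice. Summing contributions over the finitely many monomials of $Q'$ yields the lemma with $r = d$ and $C$ depending only on $Q', q_1,\ldots,q_r$.

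The main obstacle will be extracting a truly uniform-in-$p$ constant in the moment bound: qualitative finiteness is classical, but quantitative uniformity across all primes (especially the small ones) requires either careful case analysis or appeal to effective versions of Denef's rationality theorems. Fortunately, the polynomials $q_i$ arise from Arthur's $(G,M)$-family weight functions for $\GL(n)$ and form a fixed finite list (depending only on $M$, $L_p$ as a $\Q$-Levi, and the orbit $\OOO$), so this uniformity is attainable.
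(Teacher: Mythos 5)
Your argument is correct and follows essentially the same route as the paper: extract the powers of $\log p^{e_p}$ to produce the $(1+\log p^{e_p})^r$ factor, then bound the remaining integral of a polynomial in the $|\log\|q_i(Y)\|_p|$ over $N_p(\Z_p)$ by a constant uniform in $p$. The only difference is that the paper quotes this uniform bound as a black box from \cite[\S 10]{Ma}, whereas you sketch its proof via the volume estimate $\vol\{Y: v_p(q(Y))\ge m\}\ll p^{-\alpha m}$ and a layer-cake summation — which is in substance how the cited result is established, so your treatment is a self-contained version of the same argument.
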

\begin{proof}
There exists another polynomial $\tilde Q$ and some integer $j>0$ such that
\[
\begin{split}
 Q'(\log p^{e_p}, |\log \|q_1(Y)\|_p|,& \ldots, |\log \|q_p(Y)\|_p|)\\
 &\le (1+\log p^{e_p})^j   
\tilde Q(|\log \|q_1(Y)\|_p|, \ldots, |\log \|q_p(Y)\|_p|)
\end{split}
\]
for all $n'=\Id+ Y$.  We can assume that $\tilde Q$ is independent of $p$ and does only depend on $Q'$. But now by \cite[\S 10]{Ma} there exists a constant $C>0$ such that 
\[
  \int_{N_p(\Q_p)} \One_{K^{L_p}_p}(n') \tilde Q(\log p^{e_p}, |\log \|q_1(Y)\|_p|, \ldots, |\log \|q_p(Y)\|_p|)\, dn'
\le C
  \]
and $C$ can be chosen to depend only on $\tilde Q$ and $n$ but not on $p$.
\end{proof}

Together with the discussion previous to the lemma this immediately implies the following:
\begin{cor} 
With the notation as before, we have 
\[
 \left|  J_M^{L_p}(\OOO, \One_{K(p^{e_p}),Q_p})\right|
 \le C p^{-\frac{e_p}{2} \Ind_M^G \OOO} (1+ \log p^{e_p})^r
\]
with $r$ and $C$ chosen to depend only on $n$ but not on $p$ or $N$.
\end{cor}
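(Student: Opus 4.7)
The corollary is a direct synthesis of the two estimates built up in the preceding discussion, so the plan is short: identify the two contributions to the exponent, then apply the lemma to the remaining integral.

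First I would start from the estimate already derived just before the lemma:
\[
 \left|  J_M^{L_p}(\OOO, \One_{K(p^{e_p}),Q_p})\right|
 \le p^{-e_p(\dim V_p+\dim N_p)}\!\!\int_{N_p(\Q_p)}\!\! \One_{K^{L_p}_p}(n')\,Q'(\log p^{e_p}, |\log \|q_1(Y)\|_p|, \ldots, |\log \|q_r(Y)\|_p|)\, dn'.
\]
Here the factor $p^{-e_p\dim V_p}$ came from \eqref{eq:descent} applied to the parabolic $Q_p=L_pV_p$ of $G$, and $p^{-e_p\dim N_p}$ came from the change of variables $n=\mathrm{Id}+p^{e_p}Y$ in the integral over $N_p$, where $P_p=M_pN_p$ is a Richardson parabolic with Levi contained in $M$. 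Applying the lemma bounds the integral by $C(1+\log p^{e_p})^r$, and both $C$ and $r$ depend only on $n$ (through the finite list of polynomials $Q,q_1,\ldots,q_r$) and not on the prime $p$ or the level $N$.

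The only nontrivial piece to check is the identification
\[
\dim V_p+\dim N_p=\tfrac{1}{2}\dim\Ind_M^G\OOO,
\]
which would cover the general $L_p$ case and specializes to the identity used in the proof of the preceding unweighted lemma (the case $L_p=M$, where $V_p$ plays the role of $V$ and $N_p$ the role of $U^M$, giving the Richardson parabolic $L^MU^MV=L^MU^G$ of $G$). The argument is by induction in stages for induced nilpotent orbits in $\GL(n)$: first $\dim N_p=\tfrac{1}{2}\dim\Ind_M^{L_p}\OOO$ because $P_p$ is Richardson in $L_p$ for this intermediate induced orbit, and then $\Ind_M^G\OOO=\Ind_{L_p}^G\Ind_M^{L_p}\OOO$, with $\dim\Ind_{L_p}^G\widetilde\OOO=\dim\widetilde\OOO+2\dim V_p$ for any nilpotent orbit $\widetilde\OOO$ of $L_p$ (valid for $\GL(n)$ since induction is independent of the choice of parabolic with Levi $L_p$). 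Combining these two identities gives $\dim\Ind_M^G\OOO=2\dim N_p+2\dim V_p$, as required.

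Putting the three ingredients together yields the asserted bound
\[
 \left|  J_M^{L_p}(\OOO, \One_{K(p^{e_p}),Q_p})\right|
 \le C p^{-\tfrac{e_p}{2}\dim\Ind_M^G\OOO}(1+\log p^{e_p})^r,
\]
with $C$ and $r$ depending only on $n$. The only step that requires any thought is the dimension-counting identity above; the rest is bookkeeping already done in the text. I do not expect a genuine obstacle, since the induction-in-stages for Richardson orbits in $\GL(n)$ is standard.
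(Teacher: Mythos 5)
Your proposal is correct and follows essentially the same route as the paper: combine the bound $p^{-e_p(\dim V_p+\dim N_p)}$ obtained before the lemma with the lemma's bound $C(1+\log p^{e_p})^r$ on the remaining integral, and then identify $\dim V_p+\dim N_p=\tfrac12\dim\Ind_M^G\OOO$. The paper disposes of that last identity by citing \cite[Theorem 7.1.1]{CM}, whereas you derive it from the induced-orbit dimension formula and induction in stages; this is exactly the content of that reference, so the two arguments coincide.
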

\begin{proof}
It remains to note that $\dim V_p+\dim N_p$ equals half the dimension of the induced class $\Ind_M^G \OOO$ see \cite[Theorem 7.1.1]{CM}.
\end{proof}

The estimate in the corollary can also be written as
\[
 \left|  J_M^{L_p}(\OOO, \One_{K(p^{e_p}),Q_p})\right|
\le  C |N|_p^{\dim \Ind_M^G\OOO} (1 -\log |N|_p)^r.
\]
Combining this with Lemma \ref{lem:unweighted} we get
\[
 \left|  J_M^{L_p}(\OOO, \One_{K(p^{e_p}),Q_p})\right|
 \begin{cases}
  \le C |N|_p^{\dim \Ind_M^G\OOO} (1 -\log |N|_p)^r &\text{if } L_p\neq M,\\
  =  |N|_p^{\dim \Ind_M ^G\OOO} &\text{if } L_p=M.
 \end{cases}
\]
By Lemma \ref{lem:vanishing:coeff} we have for any tuple $\underline{L}=\{L_p\}_{p\in S(N)_f}$ with $d_M^G(\underline{L})\neq0$ that
\begin{multline*}
 \left| \prod_{p\in S(N)_f} J_M^{L_p}(\OOO, \One_{K^{L_p}(p^{e_p}),Q_p})\right|
 \le N^{-\dim \Ind_M^G\OOO/2} C^{\dim\aaa_M^G} \prod_{p\in S(N)_f:L_p\neq M} (1- \log|N|_p )^r\\
 \le c N^{-\dim \Ind_M^G\OOO/2} (\log N)^{r(n-1)}
\end{multline*}
for some absolute constant $c>0$ independent of $N$.
 Lemma \ref{lem:vanishing:coeff} also implies that the number of tuples $\underline{L}$ with $d_M^G(\underline{L})\neq0$ is bounded by $|S(N)_f|^{\dim\aaa_M^G}$. Since the number of elements in $S(N)_f$ is equal to the number $\omega(N)$ of prime factors of $N$, and $\omega(N)\le\log_2N\le 2\log N$, we get that  
for any $N\ge 2$ we have  
\begin{equation}\label{eq:finiteest}
\left|  J_M^{L_2}(\OOO,\One_{K(N),Q_2})\right|
\le c' N^{-\dim \Ind_M^G\OOO/2} (\log N)^{(r+1)(n-1)}
\end{equation}
for some absolute constant $c'>0$.

\section{Proof of the main result for $\GL(n)$}
\label{sec-main-gln}
\setcounter{equation}{0}

Let $G=\GL(n)$. Let $K(N)\subset \GL(n,\A_f)$ be the principal congruence 
subgroup and 
\[
Y(N):=X(K(N))
\]
the associated adelic quotient 
\eqref{adelic-quot2}. 
Let $\tau\in\Rep(G(\R)^1)$ satisfying $\tau\ncong\tau_\theta$. Let $E_\tau$
be the associated flat vector bundle over $Y(N)$ as defined in section 
\ref{sec-analtor}.
Let $\Delta_{p,Y(N)}(\tau)$ be the Laplace operator on $E_\tau$-valued $p$-forms on
$Y(N)$. For $t>0$ let $e^{-t\Delta_{p,Y(N)}(\tau)}$ be the heat operator. The
regularized trace $\Tr_{\reg}(e^{-t\Delta_{p,Y(N)}(\tau)})$ of the heat operator 
$e^{-t\Delta_{p,Y(N)}(\tau)}$ is defined by \eqref{reg-trace7}. By 
\eqref{large-time7} 
and \eqref{small-time5} the zeta function $\zeta_{p,N}(s;\tau)$ is defined by
 \begin{equation}\label{zetafct}
\zeta_{p,N}(s;\tau):=\frac{1}{\Gamma(s)}\int_0^\infty 
\Tr_{\reg}\left(e^{-t\Delta_{p,Y(N)}(\tau)}\right)t^{s-1} dt.
\end{equation}
The integral converges absolutely and uniformly on compact subsets of the 
half-plane $\Re(s)>d/2$, and admits a meromorphic extension to the entire
complex plane. Then the analytic torsion $T_{Y(N)}(\tau)\in\R^+$ is defined by
\begin{equation}\label{anator2}
\log T_{Y(N)}(\tau)=\frac{1}{2}\sum_{p=0}^d (-1)^p p \left(\FP_{s=0}
\frac{\zeta_{p,Y(N)}(s;\tau)}{s}\right)
\end{equation}
(see \cite[(13.38)]{MzM}). Let $T>0$. We write
\begin{equation}\label{splitt-at}
\begin{split}
\int_0^\infty \Tr_{\reg}\left(e^{-t\Delta_{p,Y(N)}(\tau)}\right)t^{s-1}dt 
&=\int_0^T \Tr_{\reg}\left(e^{-t\Delta_{p,Y(N)}(\tau)}\right)t^{s-1} dt\\
&+\int_T^\infty \Tr_{\reg}\left(e^{-t\Delta_{p,Y(N)}(\tau)}\right)t^{s-1}dt.
\end{split}
\end{equation}
We first deal with the second integral on the right hand side. Note that the 
integral is an entire function of $s$. Therefore, we have
\[
\FP_{s=0}\left(\frac{1}{s\Gamma(s)}\int_T^\infty 
\Tr_{\reg}\left(e^{-t\Delta_{p,Y(N)}(\tau)}\right)t^{s-1}dt\right)=
\int_T^\infty \Tr_{\reg}\left(e^{-t\Delta_{p,Y(N)}(\tau)}\right)t^{-1}dt.
\]
Using Proposition \ref{prop-larget} it follows that there exist $C,c>0$ such
that
\begin{equation}\label{est-larget3}
\frac{1}{\vol(Y(N))}\left|\int_T^\infty 
\Tr_{\reg}\left(e^{-t\Delta_{p,Y(N)}(\tau)}\right)t^{-1}dt\right|\le C e^{-cT}
\end{equation}
for all $T\ge 1$, $p=0,\dots,d$, and $N\in\N$. 

Now we turn to the first integral on the right hand side of \eqref{splitt-at}.
Recall that 
\[
\Tr_{\reg}\left(e^{-t\Delta_{p,Y(N)}(\tau)}\right)=J_{\spec}(h_t^{\tau,p}\otimes\chi_{K(N)}).
\]
For $R>0$ let $\varphi_R\in C^\infty_c(G(\R)^1)$ be the function defined by
\eqref{phiR}. By
Proposition \ref{prop-cptsupp} we have
\begin{equation}\label{cptsupp2}
\Tr_{\reg}\left(e^{-t\Delta_{p,Y(N)}(\tau)}\right)=
J_{\spec}(\varphi_R h_t^{\tau,p}\otimes\chi_{K(N)})+r_R(t),
\end{equation}
where $r_R(t)$ is a function of $t\in [0,T]$ which satisfies
\begin{equation}\label{r-estim}
\frac{1}{\vol(Y(N))}|r_R(t)|\le C_1 e^{-C_2R^2/t+C_3t}
\end{equation}
for $0\le t\le T$. This implies that $\int_0^T r_R(t) t^{s-1}dt$ is holomorphic 
in $s\in\C$ and 
\[
FP_{s=0}\left(\frac{1}{s\Gamma(s)} \int_0^Tr_R(t) t^{s-1}dt\right)= 
\int_0^Tr_R(t) t^{-1}dt.
\]
Moreover
\begin{equation}\label{est-remain}
\begin{split}
\frac{1}{\vol(Y(N))}\left|\int_0^T r_R(t)t^{-1}dt\right|&\le 
C_1\int_0^T e^{-C_2R^2/t+C_3t} t^{-1} dt\\
&\le C_1 e^{-C_4R^2/T+C_3T} \int_0^{T/R^2} e^{-C_4/t}t^{-1} dt.
\end{split}
\end{equation}
Now put $R=T^2$ and let 
\begin{equation}\label{cpt-supp4}
h^{\tau,p}_{t,T}:=\varphi_{T^2} h_t^{\tau,p}.
\end{equation}
Then it 
follows from \eqref{cptsupp2} and \eqref{est-remain}
that there exist $C,c>0$ such that
\begin{equation}\label{cpt-supp3}
\begin{split}
\frac{1}{\vol(Y(N))}\biggl|\FP_{s=0}&\left(\frac{1}{s\Gamma(s)}\int_0^T
\Tr_{\reg}\left(e^{-t\Delta_{p,Y(N)}(\tau)}\right)t^{s-1}dt\right)\\
&-\FP_{s=0}\left(\frac{1}{s\Gamma(s)}
\int_0^TJ_{\spec}(h_{t,T}^{\tau,p}\otimes\chi_{K(N)})t^{s-1}dt\right)\biggr|
\le C e^{-cT}
\end{split}
\end{equation}
for $T\ge 1$, $p=0,\dots,d$, and $N\in\N$. Using the trace formula, we are 
reduced to deal with
\[
\FP_{s=0}\left(\frac{1}{s\Gamma(s)}
\int_0^TJ_{\geo}(h_{t,T}^{\tau,p}\otimes\chi_{K(N)})t^{s-1}dt\right).
\]
Let $\varphi\in C_c^\infty(G(\R)^1)$ be such that $\varphi(g)=1$ in a 
neighborhood of $1\in G(\R)^1$. Put 
\[
\widetilde h_t^{\tau,p}=\varphi h_t^{\tau,p}.
\]
We consider test functions with $\widetilde h_t^{\tau,p}$ at the infinite place.
By Lemma \ref{lem-geounip} there exists $N_0\in\N$ such that
\[
J_{\geo}(\widetilde h_t^{\tau,p}\otimes\chi_{K(N)})
=J_{\unip}(\widetilde h_t^{\tau,p}\otimes\chi_{K(N)})
\]
for $N\ge N_0$.  Let $S(N)$ be as in Lemma \ref{lem-glob-coeff}. By the fine geometric expansion \eqref{fine-exp} and the 
definition of $h^{\tau,p}_{t,T}$ we have
\begin{equation}\label{fine-exp4}
\begin{split}
J_{\unip}(\widetilde h_t^{\tau,p}\otimes\chi_{K(N)})=&\vol(G(\Q)\bs G(\A)^1/K(N))
\widetilde h_t^{\tau,p}(1)\\
&+\sum_{(M,\OOO)\neq(G,\{1\})}a^M(S(N),\OOO)
J_M(\OOO,\widetilde h_t^{\tau,p}\otimes \chi_{K(N)}).
\end{split}
\end{equation}
Concerning the volume factor in the first summand, we used that $\chi_{K(N)}=\1_{K(N)}/\vol(K(N))$. 
To begin with we consider the first term on the right hand side. Note that
$\widetilde h_t^{\tau,p}(1)=h_t^{\tau,p}(1)$. Furthermore, by
\cite[(5.11)]{MP2} there is an asymptotic expansion
\begin{equation}\label{asymp-exp}
h_t^{\tau,p}(1)\sim \sum_{j=0}^\infty a_j t^{-d/2+j}
\end{equation}
as $t\to 0$. Furthermore, by \cite[(5.16)]{MP2} there exists $c>0$ such that
\begin{equation}\label{larget3}
h_t^{\tau,p}(1)=O(e^{-ct})
\end{equation}
as $t\to\infty$. From \eqref{asymp-exp} and \eqref{larget3} follows that the 
integral
\begin{equation}
\int_0^\infty h_t^{\tau,p}(1) t^{s-1}dt
\end{equation}
converges in the half-plane $\Re(s)>d/2$ and admits a meromorphic extension
to $\C$ which is holomorphic at $s=0$. The same is true for the integral 
over $[0,T]$ and we get
\begin{equation}\label{identcontr}
\FP_{s=0}\left(\frac{1}{s\Gamma(s)}\int_0^T h_t^{\tau,p}(1) t^{s-1}\right)=
\frac{d}{ds}\left(\frac{1}{\Gamma(s)}\int_0^\infty h_t^{\tau,p}(1) t^{s-1}\right)
\bigg|_{s=0}+O(e^{-cT}).
\end{equation}
Recall the definition of the $L^{(2)}$-analytic torsion \cite{Lo}, \cite{MV}.
For $t>0$ let 
\[
K^{(2)}(t,\tau):=\sum_{p=1}^d (-1)^p p h_t^{\tau,p}(1).
\]
Put
\[
t^{(2)}_{\widetilde X}(\tau):=\frac{1}{2}\frac{d}{ds}\left(\frac{1}{\Gamma(s)}
\int_0^\infty K^{(2)}(t,\tau) t^{s-1} dt\right)\bigg|_{s=0}.
\]
Then by \cite[(5.20)]{MP2}, the $L^{(2)}$-analytic torsion $
T^{(2)}_{Y(N)}(\tau)\in\R^+$ is given by
\[
\log T^{(2)}_{Y(N)}(\tau)=\vol(Y(N))\cdot t^{(2)}_{\widetilde X}(\tau).
\] 
To summarize, we get
\begin{equation}\label{l2-tor1}
\frac{1}{2}\sum_{p=1}^d (-1)^p p \FP_{s=0}\left(\frac{1}{s\Gamma(s)}
\int_0^T h_t^{\tau,p}(1) t^{s-1} dt\right)=t^{(2)}_{\widetilde X}(\tau)+ O(e^{-cT})
\end{equation}
for $T\ge 1$.

Next we consider the weighted orbital integrals on the right hand side of
\eqref{fine-exp4}. Note that by definition of $\chi_{K(N)}$ we have
\[
J_M(\OOO,\widetilde h^{\tau,p}_t\otimes \chi_{K(N)})=\frac{1}{\vol(K(N))}
J_M(\OOO,\widetilde h^{\tau,p}_t\otimes \1_{K(N)}).
\]
To deal with the integral on the right hand side, we use the decomposition
formula \eqref{decomp1}. For $L\in\cL(M)$, $Q\in\cP(L)$, and a unipotent conjugacy class $\OOO$ in $M(\Q)$ consider the integral $J^L_M(\OOO,(\widetilde h_t^{\tau,p})_Q)$.
Unfolding the definition $(\tilde{h}_t^{\tau,p})_{Q}$, the local weighted 
orbital integral $ J_M^{L}((\OOO,\tilde{h}_t^{\tau,p})_{Q})$ can be written as 
a non-invariant integral over the unipotent radical of a suitable  
semistandard parabolic subgroup in $G$. More precisely, there is a 
semistandard parabolic subgroup $R=M_RU_R\subseteq M$ which is a Richardson 
parabolic for $\OOO$ in $M$. If $Q=LV$ is the Levi decomposition of 
$Q$, we get
\[
J_M^{L}(\OOO,(\tilde{h}_t^{\tau,p})_{Q})
=\int_{V(\R)}\int_{U_R(\R)}  \tilde{h}_t^{\tau,p}(uv) w(u)\, du\, dv
\]
where $w$ is a certain weight function depending on the class $\OOO$, and the 
groups $M$ and $L$. This weight function on $U_R(\R)$ satisfies a certain 
``log-homogeneity'' property as explained in \cite[\S 6-7]{MzM}. Note that 
$M_RU_RV=:M_R V'$ is a Richardson parabolic for the induced class 
$\Ind_M^G\OOO$ in $G$. Extending $w$ trivially to all of $V'(\R)$ (and writing 
$w$ for the extension again), we get
\[
 J_M^{L}(\OOO,(\tilde{h}_t^{\tau,p})_{Q})
=\int_{V'(\R)} \tilde{h}_t^{\tau,p}(v) w(v)\, dv
\]
and this extended $w$ is again log-homogeneous. It follows from 
\cite[\S 12]{MzM} that this integral admits an asymptotic expansion as
$t\to 0$. This implies that the integral
\begin{equation}\label{partial-mellin}
\int_0^T J_M^L(\OOO,(\tilde{h}_t^{\tau,p})_{Q}) t^{s-1}dt
\end{equation}
converges absolutely and uniformly on compact subsets of $\Re(s)>d/2$ and
admits a meromorphic extension to $s\in\C$. Put
\begin{equation}\label{fp-orbint}
A_M^L(\OOO_\infty,T):=\FP_{s=0}\left(\frac{1}{s\Gamma(s)}
\int_0^T J_M^L(\OOO,(\widetilde h_t^{\tau,p})_Q) t^{s-1} dt\right).
\end{equation}
By \eqref{decomp1} it follows that the Mellin transform of 
$J_M(\OOO,\widetilde h_t^{\tau,p}\otimes\1_{K(N)})$ as a function of $t$ is a
meromorphic function on $\C$, and we get
\begin{equation}\label{decomp2}
\begin{split}
\FP_{s=0}\biggl(\frac{1}{s\Gamma(s)}&\int_0^T 
J_M(\OOO,\widetilde h_t^{\tau,p}\otimes\1_{K(N)})t^{s-1} dt\biggr)\\
&= \sum_{L_1,L_2\in\cL(M)} d_M^G(L_1,L_2) 
A_M^{L_1}(\OOO_\infty,T) J_M^{L_2}(\OOO_f,\1_{K(N),Q_2}).
\end{split}
\end{equation}

Denote by $J_{\unip-\{1\}}(\widetilde h_t^{\tau,p}\otimes\1_{K(N)})$ the 
sum on the right hand side of \eqref{fine-exp4} with the term
$\vol(G(\Q)\backslash G(\A)^1/K(N)) \widetilde h_t^{\tau, p}(1)$ removed. 
Combining \eqref{decomp2}, Lemma \ref{lem-glob-coeff},
and \eqref{eq:finiteest}, we obtain

\begin{prop}\label{prop-unip}
For every $T\ge 1$  there exist  constants $C(T),a>0$, $a$ 
independent of $T$, such that for all $N\ge 2$ we have
\[
\biggl| \FP_{s=0}\biggl(\frac{1}{s\Gamma(s)}\int_0^T 
J_{\text{unip} - \{1\}} (\tilde{h}_t^{\tau,p}\otimes \1_{K(N)}) t^{s-1}\, 
dt\biggr)\biggr|\le C(T) N^{-(n-1)} (\log N)^a.
\]
\end{prop}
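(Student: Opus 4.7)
The plan is to substitute the fine geometric expansion from~\eqref{fine-exp4} (with the identity contribution removed) into the integrand, and then apply the decomposition~\eqref{decomp2} to split each weighted orbital integral $J_M(\OOO,\widetilde h_t^{\tau,p}\otimes\1_{K(N)})$ into archimedean and non-archimedean factors. Only the archimedean factor $J_M^{L_1}(\OOO_\infty,(\widetilde h_t^{\tau,p})_{Q_1})$ depends on $t$, so the Mellin transform and the $\FP_{s=0}$ operation act solely on that piece, producing the constants $A_M^{L_1}(\OOO_\infty,T)$ defined in~\eqref{fp-orbint}. These constants depend on $T$ (and on $\OOO$, $M$, $L_1$, $\tau$, $p$) but crucially are independent of $N$.

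After this reduction we must estimate the finite sum
\[
\sum_{(M,\OOO)\neq(G,\{1\})}a^M(S(N),\OOO)\sum_{L_1,L_2\in\cL(M)}d_M^G(L_1,L_2)\,A_M^{L_1}(\OOO_\infty,T)\,J_M^{L_2}(\OOO_f,\1_{K(N),Q_2}).
\]
Three inputs already in hand supply the necessary bounds. First, Lemma~\ref{lem-glob-coeff}(2) gives $|a^M(S(N),\OOO)|\ll(1+\log N)^b$. Second, $A_M^{L_1}(\OOO_\infty,T)$ and $d_M^G(L_1,L_2)$ are independent of $N$. Third, the $p$-adic estimate~\eqref{eq:finiteest} gives $|J_M^{L_2}(\OOO_f,\1_{K(N),Q_2})|\ll N^{-(\dim\Ind_M^G\OOO)/2}(\log N)^{(r+1)(n-1)}$. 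Since the ranges of $(M,\OOO)$ and $(L_1,L_2)$ are finite index sets depending only on $n$, it remains to control the exponent of $N$.

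The main obstacle, and the real content of the proposition, is the uniform dimension inequality $\dim\Ind_M^G\OOO\ge 2(n-1)$ for every pair $(M,\OOO)\ne(G,\{1\})$. When $M=G$ (so that necessarily $\OOO\ne\{1\}$) this is the classical fact that the smallest nontrivial nilpotent orbit in $\mathfrak{gl}_n$ is the minimal orbit of partition type $(2,1^{n-2})$, of dimension exactly $2(n-1)$. When $M$ is a proper Levi subgroup of $G$, I will use the standard identity $\dim\Ind_M^G\OOO=\dim\OOO+2\dim N_P$ for any $P\in\cP(M)$, together with the observation that $\dim N_P\ge n-1$ for every proper parabolic subgroup of $\GL(n)$, with equality attained for $M\simeq\GL(n-1)\times\GL(1)$; since $\dim\OOO\ge 0$, this yields the bound even when $\OOO$ is the trivial orbit in $M(\Q)$. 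Combining the three bounds then produces the estimate $C(T)\,N^{-(n-1)}(\log N)^{b+(r+1)(n-1)}$, of the stated form with exponent $a:=b+(r+1)(n-1)$ depending only on $n$ and thus independent of $T$; all $T$-dependence is absorbed into the archimedean constants $A_M^{L_1}(\OOO_\infty,T)$.
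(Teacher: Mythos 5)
Your proof follows the paper's own argument exactly: the proposition is obtained there by combining the decomposition \eqref{decomp2}, Lemma \ref{lem-glob-coeff}, and the $p$-adic bound \eqref{eq:finiteest}, with the Mellin transform acting only on the $N$-independent archimedean factors $A_M^{L_1}(\OOO_\infty,T)$, precisely as you describe. The only difference is that you make explicit the inequality $\dim\Ind_M^G\OOO\ge 2(n-1)$ for $(M,\OOO)\neq(G,\{1\})$, which the paper leaves implicit; your verification (the minimal nontrivial orbit of dimension $2(n-1)$ when $M=G$, and $\dim\Ind_M^G\OOO=\dim\OOO+2\dim N_P$ with $\dim N_P\ge n-1$ for proper $M$) is correct and is a worthwhile detail to record.
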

Now we can turn to the proof of the main Theorem. Let
\[
K_N(t,\tau):=\frac{1}{2}\sum_{p=1}^d (-1)^p p 
\Tr_{\reg}\left(e^{-t\Delta_{p,Y(N)}(\tau)}\right).
\]

Let $T>0$. By \eqref{zetafct}, \eqref{anator2}
and \eqref{splitt-at} we have 
\begin{equation}\label{anator3}
\log T_{Y(N)}(\tau)=\FP_{s=0}\left(\frac{1}{s\Gamma(s)}
\int_0^T K_N(t,\tau) t^{s-1}dt\right)+\int_T^\infty K_N(t,\tau) t^{-1}dt.
\end{equation}
By \eqref{est-larget3} there exist $C,c>0$ such that 
\begin{equation}\label{est-larget4}
\frac{1}{\vol(Y(N))}\left|\int_T^\infty K_N(t,\tau) t^{-1}dt\right|\le C e^{-cT}
\end{equation}
for all $T\ge 1$ and $N\in\N$. Let $h_{t,T}^{\tau,p}\in C_c^\infty(G(\R)^1)$
be defined by \eqref{cpt-supp4}. Put
\[
K_N(t,\tau;T):=\frac{1}{2}\sum_{p=1}^d (-1)^p p 
J_{\geo}(h_{t,T}^{\tau,p}\otimes \chi_{K(N)}).
\]
By \eqref{cpt-supp3} and the trace formula it follows that there exist 
$C,c>0$ such that
\begin{equation}\label{cpt-supp6}
\begin{split}
\frac{1}{\vol(Y(N))}\biggl|\FP_{s=0}\biggl(\frac{1}{s\Gamma(s)}&
\int_0^T K_N(t,\tau) t^{s-1}dt\biggr)\\
&-\FP_{s=0}\left(\frac{1}{s\Gamma(s)}
\int_0^T K_N(t,\tau;T) t^{s-1}dt\right)\biggr|\le C e^{-cT}
\end{split}
\end{equation}
for all $T\ge 1$ and $N\in\N$. Let
\begin{equation}\label{cpt-supp5}
K_{\unip-\{1\},N}(t,\tau;T):=\frac{1}{2}\sum_{p=1}^d (-1)^p p 
J_{\unip-\{1\}}(h_{t,T}^{\tau,p}\otimes \chi_{K(N)}).
\end{equation}
By Lemma \ref{lem-geounip} and \eqref{fine-exp4} it follows that for every 
$T\ge 1$ there exists $N_0(T)\in\N$ such that 
\[
K_N(t,\tau;T)=\frac{\vol(Y(N))}{2}\sum_{p=1}^d (-1)^p p h_{t,T}^{\tau,p}(1)
+K_{\unip-\{1\},N}(t,\tau;T)
\]
for $N\ge N_0(T)$. Using \eqref{l2-tor1} and \eqref{cpt-supp6} it follows that 
for every $T\ge 1$ there exists $N_0(T)\in\N$ such that 
\begin{equation}\label{eq:l2-tor}
\begin{split}
\frac{1}{\vol(Y(N))}&\FP_{s=0}\biggl(\frac{1}{s\Gamma(s)}
\int_0^T K_N(t,\tau) t^{s-1}dt\biggr)\\
&=t^{(2)}_{\widetilde X}(\tau)+\frac{1}{\vol(Y(N))}\FP_{s=0}
\biggl(\frac{1}{s\Gamma(s)}\int_0^T K_{\unip-\{1\},N}(t,\tau) t^{s-1}dt\biggr)\\
&\hskip10pt+O(e^{-cT}).
\end{split}
\end{equation}
for $N\ge N_0(T)$. Applying Proposition \ref{prop-unip} we get that for
every $T\ge 1$ there exist constants $C_1(T), C_2, a,c>0$ and $N_0(T)\in\N$ 
such that
\begin{equation}\label{est-l2-tor2}
\begin{split}
\biggl|\frac{1}{\vol(Y(N))}\FP_{s=0}\biggl(\frac{1}{s\Gamma(s)}
\int_0^T K_N(t,\tau) &t^{s-1}dt\biggr)-t^{(2)}_{\widetilde X}(\tau)\biggr|\\
&\le C_1(T) N^{-(n-1)}(\log N)^a+C_2 e^{-cT}
\end{split}
\end{equation}
for $N\ge N_0(T)$. Combined with \eqref{anator3} and \eqref{est-larget4}
it follows that
\begin{equation}\label{limittor3}
\lim_{N\to\infty}\frac{\log T_{Y(N)}(\tau)}{\vol(Y(N))}=t^{(2)}_{\widetilde X}(\tau).
\end{equation}

\section{Proof of the main result for $\SL(n)$}
\label{sec-main-sln}
\setcounter{equation}{0}

The following section is due to Werner Hoffmann. 
In order to deduce Theorem \ref{theo-main} from \eqref{limittor3}, we need to
compare the trace formulas for $\GL(n)$ and $\SL(n)$. This is the purpose of 
the current section. Let $K_f\subset \GL(n,\A_f)$
be an open compact subgroup. Then $A_G(\R)^0 \GL(n, \Q)\backslash \GL_n(\A)/K_f$ is a right $\SL(n,\R)$-space with finitely many orbits. Let 
$g_1,\ldots, g_r\in \GL(n, \A_f)$ be representatives for these orbits. Then as 
right $\SL(n, \R)$-spaces we get
\begin{equation}\label{isom9}
 A_G(\R)^0\GL(n, \Q)\backslash \GL_n(\A)/K_f
 \simeq \bigsqcup_{j=1}^r \Gamma_{g_j, K_f} \backslash \SL(n, \R)
\end{equation}
with 
\begin{equation}\label{g-comp}
 \Gamma_{g_j, K_f}:= \SL(n, \R)\cap \left(\GL(n, \Q)\cdot g_j K_f g_j^{-1}\right)\subseteq \SL(n, \R),
\end{equation}
cf.\ \cite[\S 2]{Ar10}.
Accordingly,
\begin{equation}\label{iso-modules1}
L^2(A_G(\R)^0\GL(n,\Q)\bs\GL(n,\A)/K_f)\cong\bigoplus_{j=1}^r L^2(\Gamma_{g_j,K_f}\bs \SL(n,\R)),
\end{equation}

Now note that the right regular representation $R$ of the group $\GL(n,\A)^1$ 
in the Hilbert space 
$L^2(A_G(\R)^0\GL(n,\Q)\bs\GL(n,\A))$ induces a representation of the 
convolution 
algebra $L^1(A_G(\R)^0K_f\bs\GL(n,\A)/K_f)$ in the Hilbert space 
$L^2(A_G(\R)^0\GL(n,\Q)\bs\GL(n,\A)/K_f)$. For 
$h\in L^1(A_G(\R)^0K_f\bs\GL(n,\A)/K_f)$ let
\[
K_h(x,y):=\sum_{\gamma\in\GL(n,\Q)}h(x^{-1}\gamma y).
\]
Then we have
\[
(R(h)\phi)(x)=\int_{A_G(\R)^0\GL(n,\Q)\bs \GL(n,\A)/K_f}K_h(x,y)\phi(y)dy.
\]
With respect to the isomorphism \eqref{isom9} the kernel $K_h$ is given by
the components 
\[
\Gamma_{g_j,K_f}\bs\SL(n,\R)\times \Gamma_{g_k,K_f}\bs\SL(n,\R)\ni (x,y)\mapsto
K_h(g_jx,g_ky).
\]
If $h$ acts on the right hand side of \eqref{iso-modules1} by these integral
kernels, \eqref{iso-modules1} becomes an isomorphism of 
$L^1(A_G(\R)^0 K_f\bs\GL(n,\A)/K_f)$-modules. Especially assume  that 
$h=h_\infty\otimes \chi_{K_f}$.  Then it follows from \eqref{g-comp} that
\[
K_h(gx,gy)=\sum_{\gamma\in\Gamma_{g,K_f}} h_\infty(x^{-1}\gamma y).
\]
Now we turn to the trace formula. We briefly recall the definition of the 
distribution $J^T(f)$, $f\in C_c^\infty(A_G\bs G(\A))$. For details see \cite{Ar1}.
Let $P=M_PN_P$ be a standard 
parabolic subgroup of $G$ and let $Q$ be a parabolic subgroup containing $P$.
Let $\tau_Q^P$ and $\widehat\tau_P^P$ denote the 
characteristic functions of the set
\[
\{X\in\af_0\colon \langle\alpha,X\rangle>0\;\text{for}\;\text{all}\;
\alpha\in\Delta_P^Q\}
\]
and
\[
\{X\in\af_0\colon \langle\varpi,X\rangle>0\;\text{for}\;\text{all}
\;\varpi\in\hat\Delta_P^Q\},
\]
respectively. If $Q=G$, we will suppress the superscript. Moreover we 
put $\tau_0:=\tau_0^G$ and $\hat\tau_0:=\widehat\tau_0^G$. Let
\begin{equation}\label{P-kernel}
K_P(x,y)=\int_{N_P(\Q)\bs N_P(\A)}\sum_{\gamma\in P(\Q)}h(x^{-1}\gamma ny)\, dn
\end{equation} 
For $T\in\af^+_0$ Arthur's distribution is defined by
\[
J^T(h)=\int_{A_G(\R)^0\GL(n,\Q)\bs\GL(n,\A)/K_f}\sum_P (-1)^{n-\dim A_P}K_P(x,x)
\widehat\tau_P(H_P(x)-T_P)dx,
\]
where $P$ runs over all $\Q$-rational parabolic subgroups of $\GL(n)$ and
the truncation parameter $T_P$ is chosen in such a way that
\[
\Ad(\delta)(H_P(x)-T_P)=H_{\delta P\delta^{-1}}(x)-T_{\delta P\delta^{1}}
\]
for all $\delta\in\GL(n,\Q)$. Note that this definition differs from the usual
definition, but it is easy to check that it agrees with the usual definition.
Furthermore, for $d(T)>d_0$, the sum over $P$ is finite. Using the decomposition
\eqref{isom9}, it follows that 
\[
J^T(h)=\sum_{j}\int_{\Gamma_{g_j,K_f}\bs\SL(n,\R)}\sum_P (-1)^{n-\dim A_P} K_P(g_jx,g_jx)
\widehat\tau_P(H_P(g_jx)-T_P)\, dx.
\]
Now assume that $h=h_\infty\otimes\chi_{K_f}$. Then the integrand 
$f(g_j^{-1}x^{-1}\gamma n xg_j)$ in $K_P(g_jx,g_jx)$ is nonzero, only if
\[
\gamma n\in \left(P(\Q) N_P(\A)\right)\cap \left(\GL(n, \R)
\cdot g_jK_f g_j^{-1}\right).
\]
We may decompose the integral 
\eqref{P-kernel} into a sum over 
\[
\gamma\in P_j(\Q):= P(\Q)\cap g_j K_f g_j^{-1}
\]

and an integral over 
\[
n\in P_j(\Q)\backslash  \left(P(\Q) N_P(\A)\right)\cap \left(\GL(n, \R)\cdot g_jK_f g_j^{-1}\right)
\simeq 
N_j(\Q)\bs N_j(\A)
\]
with $N_j(\Q)= N_P(\Q)\cap g_j K_f g_j^{-1}$ and $N_j(\A)
= N_P(\A)\cap g_j K_f g_j^{-1}$. Let $\widetilde P= P(\R)\cap \SL(n, \R)$.
Then $P_j(\Q)\cap \SL(n, \R)= \tilde P \cap \Gamma_{g_j, K_f}$ so that we get  
\[
J^T(h)=\sum_{j=1}^r\int_{\Gamma_{g_r,K_f}\bs\SL(n,\R)}\sum_P (-1)^{n-\dim A_P} K_{P,g_j,K_f}(x,x)
\widehat\tau_{P}(H_{P}(x)-T_{P})dx,
\]
where 
\[
K_{P,g_j,K_f}(x,y)=\int_{(\Gamma_{g_j,K_f}\cap N_P(\R))\bs N_P(\R)}
\sum_{\gamma\in\Gamma_{g_j,K_f}\cap \tilde P}
h_\infty(x^{-1}\gamma ny) dn.
\]

Now let $K(N)\subset \GL(n,\A_f)$ be the principal congruence subgroup of 
level $N$. Let $\Gamma(N)$ denote the principal congruence subgroup of level $N$ in $\SL(n,\Z)$, and let $\varphi(N)=\#\left(\Z/N\Z\right)^\times$ be the Euler function. Then $r=\varphi(N)$ and $\Gamma_{g_j, K(N)}\simeq \Gamma(N)$ for every $j$, cf.\ \cite[\S 4]{LM}.
Hence for $h=h_\infty\otimes\chi_{K_f}$ it follows that
\[
J^T(h)=\varphi(N)\int_{\Gamma(N)\bs\SL(n,\R)}\sum_P (-1)^{n-\dim A_P} K_{P,N}(x,x)
\widehat\tau_P(H_P(x)-T_P)dx,
\]
where
\[
K_{P,N}(x,y)=\int_{\Gamma(N)\cap N_P(\R)\bs N_P(\R)}\sum_{\gamma\in\Gamma(N)\cap \tilde P}
h_\infty(x^{-1}\gamma ny)dn.
\]
Let 
\[
Y(N)=A_G(\R)^0\GL(n,\Q)\bs \GL(n,\A)/K(N)
\]
and
\[
X(N)=\Gamma(N)\bs\SL(n,\R)/\SO(n).
\]
Then $Y(N)$ is the disjoint union of $\varphi(N)$ copies of $X(N)$. 
Let $\Delta_{p,X(N)}(\tau)$ be the Laplace operator on $E_\tau$-valued 
$p$-forms on $X(N)$. Then it follows from the definition of the regularized
trace \eqref{reg-trace7} that
\[
\Tr_{\reg}\left(e^{-t\Delta_{p,Y(N)}(\tau)}\right)=
\varphi(N)\Tr_{reg}\left(e^{-t\Delta_{p,X(N)}(\tau)}\right)
\]
for all $N\ge 3$. Using the definition \eqref{analtor} of the analytic
torsion, we obtain
\begin{equation}\label{analtor10}
\log T_{Y(N)}(\tau)=\varphi(N)\log T_{X(N)}(\tau).
\end{equation}
Furthermore we have 
\begin{equation}\label{volume1}
\vol(Y(N))=\varphi(N)\vol(X(N)).
\end{equation}
Combining \eqref{limittor3}, \eqref{analtor10}, and \eqref{volume1}, we obtain
the first part of Theorem \ref{theo-main}. The second part follows immediately
from \cite[Proposition 5.2]{BV}.

\end{document}